\colorlet{wine-stain}{red!50!black}
\colorlet{light-blue}{cyan!60!black}
\newtheorem{thm}[subsection]{Theorem}
\newtheorem{prop}[subsection]{Proposition}
\newtheorem{cor}[subsection]{Corollary}
\newtheorem{lem}[subsection]{Lemma}
\theoremstyle{definition}
\newtheorem{defi}[subsection]{Definition}
\newtheorem{rem}[subsection]{Remark}
\numberwithin{equation}{subsection}
\def\bA{\mathbb{A}}
\def\bF{\mathbb{F}}
\def\bK{\mathbb{K}}
\def\bL{\mathbb{L}}
\def\bQ{\mathbb{Q}}
\def\bZ{\mathbb{Z}}
\def\fD{\mathfrak{D}}
\def\fX{\mathfrak{X}}
\def\fY{\mathfrak{Y}}
\def\d{\mathfrak{d}}
\def\m{\mathfrak{m}}
\def\p{\mathfrak{p}}
\def\q{\mathfrak{q}}
\def\r{\mathfrak{r}}
\def\cC{\mathcal{C}}
\def\cF{\mathcal{F}}
\def\cG{\mathcal{G}}
\def\cI{\mathcal{I}}
\def\cO{\mathcal{O}}
\def\cU{\mathcal{U}}
\def\cV{\mathcal{V}}
\def\scrC{\mathscr{C}}
\def\scrD{\mathscr{D}}
\def\scrS{\mathscr{S}}
\def\oK{\overline{K}}
\def\ox{\overline{x}}
\def\oy{\overline{y}}
\def\hf{\widehat{f}}
\def\hg{\widehat{g}}
\DeclareMathOperator{\Spec}{Spec}
\DeclareMathOperator{\Spf}{Spf}
\DeclareMathOperator{\Sp}{Sp}
\DeclareMathOperator{\Hom}{Hom}
\DeclareMathOperator{\St}{St}
\DeclareMathOperator{\sw}{sw}
\DeclareMathOperator{\Ind}{Ind}
\DeclareMathOperator{\ord}{ord}
\DeclareMathOperator{\Aut}{Aut}
\DeclareMathOperator{\CC}{CC}
\DeclareMathOperator{\AS}{AS}
\DeclareMathOperator{\rk}{rk}
\title{Variation of the Swan conductor of an $\bF_{\ell}$-sheaf on a rigid annulus}
\author{Amadou Bah}
\address{Columbia University, Department of Mathematics, 2990 Broadway, New York, NY 10027, USA}
\email{bah@math.columbia.edu}
\date{}
\begin{document}

\begin{abstract}
Let $C=A(r, r')$ be a closed annulus of radii $r$ and $r'$ ($r < r' \in \bQ_{\geq 0}$) over a complete discrete valuation field with algebraically closed residue field of characteristic $p>0$. To an étale sheaf of $\bF_{\ell}$-modules $\cF$ on $C$, ramified at most at a finite set of rigid points of $C$, we associate an Abbes-Saito Swan conductor function $\sw_{\AS}(\cF, \cdot): [r, r']\cap \bQ_{\geq 0} \to \bQ$ which, for the variable $t$, measures the ramification of $\cF\lvert C^{[t]}$ - the restriction of $\cF$ to the sub-annulus $C^{[t]}$ of $C$ of radius $t$ with $0$-thickness - along the special fiber of the normalized integral model of $C^{[t]}$. We show that this function is continuous,  convex and piecewise linear outside the radii of the ramification points of $\cF$, with finitely many slopes which are all integers. For two distinct radii $t$ and $t'$ lying between consecutive radii of ramification points of $\cF$, we compute the difference of the slopes of $\sw_{\AS}(\cF, \cdot)$ at $t$ and $t'$ as the difference of the orders of the characteristic cycles of $\cF$ at $t$ and $t'$. 
\end{abstract}

\maketitle

\tableofcontents

\section{Introduction}\label{Introduction}
\subsection{}\label{VCSII-Prologue}
Let $K$ be a complete discrete valuation field, $\cO_K$ its valuation ring and $\pi$ a uniformizer of $\cO_K$. 
Let also $\oK$ be a separable closure of $K$ and $v_K:\oK^{\times}\to \bQ$ the valuation of $\oK$ normalized by $v(\pi)=1$.
Assume that the residue field $k$ of $\cO_K$ is \textit{algebraically closed} of characteristic $p>0$. Denote by $D$ the closed rigid unit disc over $K$. 
Let $0\leq r < r'$ be rational numbers and denote by $C=A(r', r)$ the closed sub-annulus of $D$ of radii $\lvert \pi\lvert^{r'} < \lvert\pi\lvert^r$.
For a rational number $r\leq t\leq r'$, we denote by $C^{[t]}$ the sub-annulus of $C$ of radius $t$ with $0$-thickness.
Let $\Lambda$ be a finite field of characteristic $\ell\neq p$. An étale sheaf of $\Lambda$-modules $\cF$ on $C$ is said to be \textit{meromorphic} if it is lisse, i.e. locally constant and constructible, on the complement of a finite set of rigid points of $C$.
In this paper, we associate to such an $\cF$ a Swan conductor function $\sw_{\AS}(\cF, \cdot): [r, r']\cap \bQ_{\geq 0} \to \bQ$, defined by the (logarithmic) ramification theory of Abbes and Saito, and which, for the variable $t$, measures the ramification of $\cF\lvert C^{[t]}$ along the special fiber of the normalized integral model of $C^{[t]}$. We show that this function is continuous and piecewise linear outside the radii of the ramification points of $\cF$, with finitely many slopes which are all integers. For two distinct radii $t$ and $t'$ lying between consecutive radii of ramification points of $\cF$, we compute the difference of the slopes of $\sw_{\AS}(\cF, \cdot)$ at $t$ and $t'$ as the difference of the orders of the characteristic cycles of $\cF$ at $t$ and $t'$. 

\subsection{}\label{Intro-Faisceau-Meromorphe}
Let $\psi : \bF_p\to \Lambda^{\times}$ be a nontrivial character.
Let $r \leq t \leq r'$ be a rational number distinct from the radii of the ramification points of $\cF$. Then, the restriction $\cF\lvert C^{[t]}$ corresponds to a connected Galois étale cover $f^{[t]}: X^{[t]} \to C^{[t]}$ and a continuous finite dimensional $\Lambda$-representation $\rho_{\cF}[t]$ of $G^{[t]}=\Aut(X^{[t]}/C^{[t]})$ \cite[2.10]{deJong}. Let $\fX_{K'}^{[t]}\to \scrC_{K'}^{[t]}$ be the normalized integral model of $f ^{[t]}$ over $\cO_{K'}$, for some large finite extension $K'$ of $K$ \eqref{Models Formels}, $\overline{\p}^{(t)}$ a geometric generic point of the special fiber $\scrC_{s'}^{[t]}$ of $\scrC_{K'}^{[t]}$ and $\fX_{K'}^{[t]}(\overline{\p}^{(t)})$ the set of geometric generic points of the special fiber $\fX_{s'}^{[t]}$ above $\overline{\p}^{(t)}$.
The canonical right action of $G^{[t]}$ on $X^{[t]}$ induces a transitive action of $G^{[t]}$ on $\fX_{K'}^{[t]}(\overline{\p}^{(t)})$.
The stabilizer of any $\overline{\q}^{(t)}\in \fX_{K'}^{[t]}(\overline{\p}^{(t)})$ is isomorphic to the Galois group $G_{\overline{\q}^{(t)}}$ of a finite Galois extension of henselian discrete valuation fields \eqref{Artin-Intrinseque}.
Therefore, the ramification theory of Abbes and Saito \cite{A.S.1, A.S.3} applies to the restriction $M_{\overline{\q}^{(t)}}=\rho_{\cF}[t]\vert G_{\overline{\q}^{(t)}}$; it yields the Swan conductor
\begin{equation}
	\label{Intro-Faisceau-Meromorphe 1}
\sw_{\AS}(\cF, t)=\sw_{G_{\overline{\q}^{(t)}}}^{\AS}(M_{\overline{\q}^{(t)}}) \quad \in \bQ
\end{equation}
and the characteristic cycle $\CC_{\psi}(M_{\overline{\q}^{(t)}})$ of $M_{\overline{\q}^{(t)}}$, which are independent of the choice of both $K'$ and $\overline{\q}^{(t)} \in \fX_{K'}^{[t]}(\overline{\p}^{(t)})$ \eqref{SW&CC}.
In our setting, the characteristic cycle lies in $(\Omega^1_{\kappa(\overline{\p}^{(t)})})^{\otimes m_t}$, where $m_t= \dim_{\Lambda}(M_{\overline{\q}^{(t)}}/(M_{\overline{\q}^{(t)}})^{(0)})$, with $(M_{\overline{\q}^{(t)}})^{(0)}$ being the part of $M_{\overline{\q}^{(t)}}$ fixed by the tame inertia subgroup of $G_{\overline{\q}^{(t)}}$, and $\kappa(\overline{\p}^{(t)})$ coincides with the field $\cO_{\scrC_{s'}^{[t]}, \overline{\p}^{(t)}}$. The latter has a normalized discrete valuation map $\ord_{\overline{\p}^{(t)}} : \kappa(\overline{\p}^{(t)})^{\times}\to \bZ$ defined by $\ord_{\overline{\p}^{(t)}}(\xi)=1$; it extends uniquely to $(\Omega^1_{\kappa(\overline{\p}^{(t)})})^{\otimes m_t}$. We put
\begin{equation}
	\label{Intro-Faisceau-Meromorphe 2}
\varphi_s(\cF, t)= - \ord_{\overline{\p}^{(t)}}(\CC_{\psi}(M_{\overline{\q}^{(t)}})) - m_t.
\end{equation}
This integer is independent not only of $\overline{\q}^{(t)} \in \fX_{K'}^{[t]}(\overline{\p}^{(t)})$, but also of the choice of both $\psi$ and a uniformizer of $\cO_{K'}$ \cite[11.6]{Hu}.

We note that definitions \eqref{Intro-Faisceau-Meromorphe 1} and \eqref{Intro-Faisceau-Meromorphe 2} extend to the radii of the ramification points of $\cF$ : one just replaces $C^{[t]}$ by the complement of the ramification points of radius $t$ and make the same constructions again, with the same arguments. Our main result takes the following general form.

\begin{thm}
	\label{Intro-Thm-Faisceau-Meromorphe}
Let $\cF$ be a meromorphic étale sheaf of $\Lambda$-modules on the annulus $C$ as above and $\{r_1 > \ldots > r_n\}$ the ordered set of the radii of its ramification points.
Then, the function 
\begin{equation}
	\label{Intro-Thm-Faisceau-Meromorphe 1}
\sw_{\AS}(\cF, \cdot) : [r, r']\cap\bQ_{\geq 0} \to \bQ,
\end{equation}
is continuous,  convex and piecewise linear on $\bQ - \{r_1, \ldots, r_n\}$,  with finitely many slopes which are all integers.  Moreover, for rational numbers $r_i <  t < t' <r_{i-1}$, the difference of the right and left derivatives of $\sw_{\AS}(\cF, \cdot)$ at $t$ and $t'$ respectively is
\begin{equation}
	\label{Intro-Thm-Faisceau-Meromorphe 2}
\frac{d}{dt}\sw_{\AS}(\cF, t+) - \frac{d}{dt}\sw_{\AS}(\cF, t'-)=\varphi_s(\cF, t) - \varphi_s(\cF, t').
\end{equation}
\end{thm}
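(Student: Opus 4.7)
The plan is to reduce the global statement to a local analysis on the henselian discrete valuation fields $\cO_{\scrC_{s'}^{[t]}, \overline{\p}^{(t)}}$ as $t$ varies, and to extract the theorem from a single derivative identity tying $\sw_{\AS}(\cF, \cdot)$ to $\varphi_s(\cF, \cdot)$.

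First, I fix an open interval $(r_i, r_{i-1})$ on which $\cF$ has no ramification points. Over the corresponding open sub-annulus $C_i$, the sheaf $\cF$ is lisse and is classified by a single connected Galois étale cover $f_i \colon X_i \to C_i$ together with a continuous finite-dimensional $\Lambda$-representation $\rho_i$ of $G_i = \Aut(X_i/C_i)$. For every rational $t$ in the interval, restriction to $C^{[t]}$ factors $\rho_{\cF}[t]$ through a map $G_{\overline{\q}^{(t)}} \to G_i$, and the local henselian DVFs $\cO_{\scrC_{s'}^{[t]}, \overline{\p}^{(t)}}$ differ from one another only by rescaling a uniformizer of the ambient annulus by a rational power of $\pi$. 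Pulled back to $G_i$, the Abbes-Saito filtrations of the various $G_{\overline{\q}^{(t)}}$ therefore assemble into a single filtration whose break numbers depend affinely on $t$.

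The core of the argument is the derivative identity
\begin{equation*}
\frac{d}{dt}\sw_{\AS}(\cF, t\pm) = -\varphi_s(\cF, t), \qquad t \in (r_i, r_{i-1}) \cap \bQ.
\end{equation*}
Granting this, the remaining assertions are formal: the slopes $-\varphi_s(\cF, t)$ are integers by definition of $\varphi_s$; on each subinterval of $(r_i, r_{i-1})$ where $\rho_i$ has constant Abbes-Saito breaks, $\varphi_s(\cF, \cdot)$ is constant, yielding piecewise linearity on $\bQ - \{r_1, \ldots, r_n\}$ with finitely many slopes; convexity follows from the monotonicity $\varphi_s(\cF, t) \geq \varphi_s(\cF, t')$ for $t < t'$ in the same interval, itself reflecting the fact that the affine functions describing the breaks have non-positive slopes in $t$; the difference formula \eqref{Intro-Thm-Faisceau-Meromorphe 2} is a direct telescoping of the derivative identity from $t$ to $t'$; and continuity at each $r_i$ is handled by the puncture construction discussed just before the theorem, combined with the additivity of Swan conductors over the finitely many points of radius $r_i$.

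The main obstacle is the derivative identity itself. I would prove it by computing both sides explicitly in terms of the Abbes-Saito data of $\rho_i$. On the left, $\sw_{\AS}(\cF, t)$ is a weighted sum of break numbers, each an affine function of $t$, whose derivative is the weighted sum of their slopes. On the right, the characteristic cycle formalism of Abbes-Saito \cite{A.S.3} realizes $\CC_\psi(M_{\overline{\q}^{(t)}})$ as a refined differential symbol built from the wild part of $\rho_i$; computing its order at $\overline{\p}^{(t)}$ and shifting by $m_t$ recovers the same weighted sum of slopes with the opposite sign. Matching the two requires tracking the evolution of the differential of a uniformizer of $C^{[t]}$ inside $\Omega^1_{\kappa(\overline{\p}^{(t)})}$ as $t$ varies, and invoking Hasse-Arf in the logarithmic Abbes-Saito setting to ensure that the breaks and their affine dependence on $t$ yield integer slopes after weighting.
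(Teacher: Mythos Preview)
Your proposal has two substantial gaps.

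First, you aim for the pointwise derivative identity $\frac{d}{dt}\sw_{\AS}(\cF, t\pm) = -\varphi_s(\cF, t)$, which is strictly stronger than \eqref{Intro-Thm-Faisceau-Meromorphe 2}. The paper explicitly notes, just after the statement, that this identity (known in the disc case from \cite{VCS}) may well hold here but that the present techniques are unlikely to yield it; only the \emph{difference} of slopes between two radii is accessible. Your sketch in the last paragraph---``computing both sides explicitly in terms of the Abbes-Saito data''---is not a proof: the assertion that the break numbers ``depend affinely on $t$'' is precisely the piecewise linearity you are trying to establish, and the claimed matching via ``tracking the evolution of the differential of a uniformizer'' is not carried out and would amount to reproving the comparison of characteristic cycles between Kato's and Abbes--Saito's theories.

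Second, and more seriously, your convexity argument is circular. You deduce convexity from the monotonicity $\varphi_s(\cF, t) \geq \varphi_s(\cF, t')$ for $t < t'$, and justify the latter only by saying that ``the affine functions describing the breaks have non-positive slopes in $t$''. No reason is given for this sign, and none is available by purely local character computations. In the paper, convexity is the deepest part of the result and requires a global cohomological input: one algebraizes a compactification $Y/S'$ of the normalized integral model of the cover, proves that each stalk $R\Psi_{Y/S'}(\bQ_\ell)_x$ is a perfect complex of $\bQ_\ell[\St_x]$-modules with rational character whose shift is the class of an actual projective module, and identifies the resulting class with $\widetilde{\sw}_f^{\beta}+\widetilde{\sw}_f^{\prime\beta}$. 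This realizes the latter as the character of a projective $\bQ_\ell[G]$-module, so its pairing with any genuine character is non-negative, which is exactly convexity. The non-vanishing $R^1\Psi(\Lambda)_x\neq 0$ at non-smooth points (due to T.~Saito) is what makes this positivity go through. Your local route supplies no mechanism to produce this positivity.
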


\noindent We note first that if $\cF$ is a lisse sheaf on the entire rigid unit disc $D$, we proved a more precise statement–\textit{minus the convexity}– in \cite[Theorem 1.9]{VCS},  namely that the function $\sw_{\AS}(\cF, \cdot)$, defined by the formula \eqref{Intro-Faisceau-Meromorphe 1} on the whole $\bQ_{\geq 0}$, is continuous and piecewise linear, with finitely many slopes which are all integers, and its right derivative \textit{is exactly} the function $\varphi_s(\cF, \cdot) : \bQ_{\geq 0}\to \bQ$,  defined by the formula \eqref{Intro-Faisceau-Meromorphe 2}.  This may still be true in the setting of \ref{Intro-Faisceau-Meromorphe}, but our techniques are unlikely to yield it. Instead,  what we get is the variation of the slope between two distinct radii.

In \textit{loc. cit.},  we also shared the expectation that the main theorem there \cite[Theorem 1.9]{VCS} should hold for an étale sheaf on $D$ (or $C$) with a finite number of ramification points and that the function $\sw_{\AS}(\cF, \cdot)$ should be convex as  well,  expressing the hope of tackling these questions soon. The present paper fulfills these expectations,  in the form of Theorem \ref{Intro-Faisceau-Meromorphe},  by both extending the techniques of \textit{loc. cit.} to handle morphisms to annuli and adding new cohomological ingredients. to the mix. \newline
In fact,  by excluding the radii of the ramification points of $\cF$, we see that we are reduced to treating the case of a lisse étale sheaf of $\Lambda$-modules on the annulus $C$, hence to studying étale morphisms to annuli.  

\subsection{}\label{Intro-Strategy-I}
Except for the convexity statement,  which we deal with somewhat separately with cohomological arguments,  the overall strategy of proof is the same as the one employed for in \cite{VCS}.  Therefore,  rather than repeating it here,  we refer the reader to the introduction of \textit{loc. cit.} for an overview and briefly indicate the most salient changes.  We then proceed to explain how the convexity property is established.

Instead of a lisse sheaf,  we more generally consider a smooth $K$-affinoid space $X$ and a finite flat morphism $f: X\to C=A(r', r)$ which is generically étale.  We extend the constructions and results in \cite[\S 4, 7]{VCS} to this setting.  Most notably,  we generalize the nearby cycles formula \cite[4.28]{VCS} linking the derivative of the discriminant function $\partial_f$ studied by W. Lütkebohmert \cite[\S 1, 2]{Lutke} with cohomological invariants associated to the local homomorphisms $\cO_{\scrC_{K'}, o}\to \cO_{\fX_{K'}, \ox}$, for the formal étale topology, induced by the normalized integral model $\hf: \fX_{K'}\to \scrC_{K'}$ of $f$ over $\cO_{K'}$, where the $\ox$'s are the geometric points of the special fiber $\fX_{s'}$ of $\fX_{K'}$ above the origin $o$ of $\scrC_{s'}$ \eqref{Nearby Cycles Lutke}. We do this under the assumption that $f$ is étale over $A(r, r)$ and $A(r', r')$ and that the inverse images by $f$ of $A(r, r)$ and $A(r', r')$ decomposes into finite disjoint unions of annuli
\begin{equation}
	\label{Intro-Strategy-I 1}
f^{-1}(A(r, r))=\coprod_i A(r/d_i, r/d_i) \quad {\rm and} \quad f^{-1}(A(r', r'))=\coprod_j A(r'/d'_j, r'/d'_j)
\end{equation}
respectively, which is later satisfied for all but a finite number of the radii of interest, thanks to the semi-stable reduction theorem \eqref{Decomposition SemiStable}. We have to compactify both sets of annuli, unlike in \cite[4.28]{VCS} where we have only one side.  Let $Y\to S'=\Spec(\cO_{K'})$ be the algebraization of the compactification of $\fX_{K'}$ obtained by replacing the annuli of the disjoint unions in \eqref{Intro-Strategy-I 1} with discs of the same radii (see \eqref{Nearby Cycles Lutke 6}); it is a normal and proper relative curve,  smooth outside the $\ox$'s, which features prominently in our stratefy of proof. 
Taking into account orientation issues,  the final nearby cycles formula exhibits the difference $\frac{d}{dt}\partial_f(r+) - \frac{d}{dt}\partial_f(r'-)$ as the (alternating) sum,  over the $\ox$'s,  of the dimensions of the stalks $R\Psi_{Y/S'}(\Lambda)_{\ox}$ (plus a sum of local numerical invariants for horizontal ramification which vanish when $f$ is étale) .
This difference between the inner and outer radii is a feature that carries over throughout all the text.\newline
From this formula and the non-vanishing of $R^1\Psi_{Y/S'}(\Lambda)_{\ox}$ \eqref{Saito-Vanishing}, we deduce the following additional property for Lütkebohmert's discriminant function.

\begin{prop}[{cf. \ref{Discriminant-Convex}}]
	\label{Intro-Discriminant-Convex}
Assume that $f : X\to C$ is étale. Then, the discriminant function $\partial_f$ is convex.
\end{prop}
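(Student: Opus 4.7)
The plan is to combine the annular nearby cycles formula — the generalization of \cite[4.28]{VCS} developed in the body of the paper (cf.~\eqref{Nearby Cycles Lutke}) — with Saito's non-vanishing result \eqref{Saito-Vanishing}. Since $\partial_f$ is already known to be continuous and piecewise linear, convexity amounts to showing that its slopes are non-decreasing, i.e., that $\frac{d}{dt}\partial_f(t+) \leq \frac{d}{dt}\partial_f(t'-)$ for all $r \leq t < t' \leq r'$ in $\bQ$. I would first reduce to the case of a pair $(t,t')$ at which the decomposition condition \eqref{Intro-Strategy-I 1} holds for the subannulus $A(t', t)$; by the semi-stable reduction theorem \eqref{Decomposition SemiStable}, this condition fails at only finitely many radii, so such pairs are dense. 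The piecewise linearity and continuity of $\partial_f$ then let the slope inequality on this dense set propagate to every break point.

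For such an admissible pair $(t, t')$, I would apply the nearby cycles formula to $f^{-1}(A(t',t)) \to A(t',t)$. The étale hypothesis on $f$ kills the local horizontal-ramification numerical invariants, and the formula reduces to
\begin{equation*}
\frac{d}{dt}\partial_f(t+) - \frac{d}{dt}\partial_f(t'-) \;=\; \sum_{\ox} \bigl(\dim_{\Lambda} R^0\Psi_{Y/S'}(\Lambda)_{\ox} - \dim_{\Lambda} R^1\Psi_{Y/S'}(\Lambda)_{\ox}\bigr),
\end{equation*}
where $Y \to S'=\Spec(\cO_{K'})$ is the proper normal relative curve constructed by capping off both the inner and outer families of annuli in \eqref{Intro-Strategy-I 1} with discs of the same radii, and $\ox$ ranges over the geometric points of the special fiber at which smoothness fails. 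Normality of $Y$ (combined with the fact that a normal local ring has connected punctured spectrum) yields $\dim_{\Lambda} R^0\Psi_{Y/S'}(\Lambda)_{\ox} = 1$, while \eqref{Saito-Vanishing} forces $\dim_{\Lambda} R^1\Psi_{Y/S'}(\Lambda)_{\ox} \geq 1$ at each such $\ox$. Each summand on the right is therefore a non-positive integer, establishing the desired slope inequality.

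The principal obstacle is not this final cohomological assembly but the two-sided annular nearby cycles formula itself: one has to compactify both ends of $\fX_{K'}$ by gluing in formal discs of the correct radii, track the opposite orientations at the inner and outer circles so that the signs in the alternating sum come out consistently on both sides, and check that the horizontal ramification contributions really do vanish when $f$ is étale — none of which is present in the one-sided situation of \cite[4.28]{VCS}. Once that formula is in place (as in the development around \eqref{Nearby Cycles Lutke}), convexity follows as the short consequence above.
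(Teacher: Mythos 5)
Your proposal is correct and follows essentially the same route as the paper: combine the two-sided nearby cycles formula \eqref{Nearby Cycles Lutke 2} (with the $d_{\eta}$-terms killed by étaleness, \ref{AnnulationDe-d-Eta}) and Lütkebohmert's slope formula \ref{Variation-Lutke} with Saito's non-vanishing \ref{Saito-Vanishing} at the non-smooth points above $o$ \eqref{Singular-Points}, then spread the inequality to all radii via semi-stable reduction \ref{Decomposition SemiStable}. The only cosmetic difference is that you package the left-hand side as the Euler characteristic $\sum_{\ox}(\dim R^0\Psi_{\ox}-\dim R^1\Psi_{\ox})$, whereas the paper writes it as $\sum_j(-2\delta(B_j)+\lvert P_s(B_j)\lvert)$ and converts via Kato's formula \eqref{Nearby Cycles Lutke 22}; these are equivalent.
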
 

\subsection{}\label{Intro-Strategy-II}
Let $G$ be a finite group with a right action on $X$ such that $f$ is $G$-equivariant. Just like in \cite[\S 7]{VCS}, for a rational number $r\leq t\leq r'$, Kato's ramification theory for $\bZ^2$-valuation rings \cite[\S 6]{VCS} yields an Artin class function $\widetilde{a}_f(t)$ on $G$ \eqref{Artin-Intrinseque}. But, unlike in \textit{loc. cit.}, for the aforementioned reason, the appropriate Swan class function to consider is $\widetilde{\sw}_f^{\beta}([t, t'])$ for an interval $[t, t']$ such that $t\neq t'$, accounting for the two branches $A(t, t)$ and $A(t', t')$. (We don't use this notation in the body of the text, writing instead $\widetilde{\sw}_{f^{[t, t']}}^{\beta} + \widetilde{\sw}_{f^{[t, t']}}^{\prime\beta}$ \eqref{FunctionsArtinSwan}.)\newline
Let $r_G$ be the character of the regular representation of $G$ and $\langle \cdot, \cdot \rangle$ the usual pairing of class functions on $G$. Then, we have \eqref{Var-Reg}
\begin{equation}
	\label{Intro-Strategy-II 1}
\partial_f(t)=\langle\widetilde{a}_f(t), r_G \rangle,
\end{equation}
\begin{equation}
	\label{Intro-Strategy-II 2}
\frac{d}{dt}\partial_f(t+) - \frac{d}{dt}\partial_f(t'-)=\langle \widetilde{\sw}_f^{\beta}([t, t']),  r_G \rangle.
\end{equation}
The proof of \ref{Intro-Strategy-II 2} requires not only the nearby cycles formula \eqref{Nearby Cycles Lutke 2} but also a Hurwitz formula, due to Kato for local homomorphisms like $\cO_{\scrC_{K'}, o}\to \cO_{\fX_{K'}, \ox}$ above. For the analogous statement \cite[7.12]{VCS}, we could get by with a special case of this Kato-Hurwitz formula (by the regularity of the affine line !), as we considered only sub-discs of $D$. For annuli with thickness however, the more general formula \eqref{Kato-Formula 1} is needed.\newline
From the above identities and \ref{Intro-Discriminant-Convex}, we deduce that the function $t\mapsto \langle\widetilde{a}_f(t), r_G \rangle$ is continuous,  convex and piecewise linear with (finitely many) integer slopes whose variation from $t$ to $t'$ is given by $\langle \widetilde{\sw}_f^{\beta}([t, t']),  r_G \rangle$. \newline

\noindent The identities \eqref{Intro-Strategy-II 1} and \eqref{Intro-Strategy-II 2} hold in fact more generally with,  for a given subgroup $H\subset G$,  $f$ replaced by the quotient morphism $X/H\to C$ on the left-hand-side and $r_G$ replaced by (the character of) $\Ind_H^G 1_H$.  Then,  by induction arguments,  the aforementioned variational properties extend to the function $t\mapsto \langle\widetilde{a}_f(t),  \chi \rangle$, for any character $\chi$ of $G$,  \textit{except for the convexity}.\newline
For the latter property, the positivity of the pairings $\langle \widetilde{\sw}_f^{\beta}([t, t']),  \Ind_H^G 1_H \rangle$,  for all subgroups $H$, is not enough.  However,  $\widetilde{\sw}_f^{\beta}([t, t'])$ has a geometric realization which is projective,  a stronger conclusion which relies on the following general result.

\begin{prop}[{Proposition \ref{Projective-TorsionCoeffs},  Theorem \ref{Rationality-of-NearbyCycles}}]
	\label{Intro-Perfection-Projectivity-Rationality}
Let $Y\to S=\Spec(\cO_K)$ be a normal relative curve and $x$ a closed point of the special fiber $Y_s$ such that $Y-\{x\}$ is smooth over $S$.
Let $G$ be a finite group with an admissible right action on $Y$ through $S$-automorphisms such that the induced action on the generic fiber $Y_{\eta}$ is free,  and denote by $\St_x$ the stabilizer of $x$.  Let $\ell\neq p$ be a prime number and denote by $P_{\bQ_\ell}(\St_x)$ the Grothendieck group of finitely generated and projective $\bQ_\ell[\St_x]$-modules.
\begin{itemize}
\item[(i)] The stalk $R\Psi_{Y/S}(\bQ_\ell)_x$ defines a perfect complex of $\bQ_\ell[\St_x]$-modules of amplitude in $[0, 1]$.
\item[(ii)] The resulting class $[R\Psi_{Y/S}(\bQ_\ell)_x[1]]$ in $P_{\bQ_\ell}(\St_x)$ is the class of an actual finitely generated and projective $\bQ_\ell[\St_x]$-module.
\item[(iii)] The class $[R\Psi_{Y/S}(\bQ_\ell)_x]$ has rational coefficients. 
\end{itemize}
\end{prop}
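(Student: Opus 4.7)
Set $F_x = \Spec(\cO_{Y,x}^{\mathrm{sh}}) \times_S \bar\eta$, the Milnor fiber at $x$: a smooth affine curve over $\bar\eta$ with right $\St_x$-action, and $R\Psi_{Y/S}(\bQ_\ell)_x \cong R\Gamma(F_x, \bQ_\ell)$. After disposing of (i) via finiteness of nearby cycles, the plan for (ii) and (iii) is to equivariantly compactify $F_x$ and to combine the resulting class identity with the Grothendieck--Lefschetz trace formula and Saito's non-vanishing Theorem \ref{Saito-Vanishing}.

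For (i): constructibility of each $R^i\Psi_{Y/S}(\bQ_\ell)$ (standard finiteness over excellent bases) combined with vanishing in degrees $\geq 2$ (relative dimension one) shows that the stalk at $x$ is a bounded complex of finite-dimensional $\bQ_\ell$-vector spaces concentrated in degrees $[0,1]$. Since $\operatorname{char} \bQ_\ell = 0$, Maschke's theorem makes $\bQ_\ell[\St_x]$ semisimple, so every finitely generated module is projective; this gives perfectness of amplitude $[0,1]$.

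The freeness of the $G$-action on $Y_\eta$ transfers to a free $\St_x$-action on $F_x$. Choose an equivariant smooth compactification $F_x \hookrightarrow \overline{F}_x$ with finite $\St_x$-stable complement $D = \overline{F}_x \setminus F_x$. The localisation triangle for the open immersion, combined with absolute purity for $D \hookrightarrow \overline{F}_x$, produces in $P_{\bQ_\ell}(\St_x)$ the identity
$$
[R\Psi_{Y/S}(\bQ_\ell)_x[1]] = [H^1(\overline{F}_x, \bQ_\ell)] + [\bQ_\ell[D]] - 2[\bQ_\ell].
$$
Part (iii) follows at once from the Grothendieck--Lefschetz trace formula on the smooth projective curve $\overline{F}_x$: for each $g \in \St_x$, one has $\operatorname{tr}(g \mid H^1(\overline{F}_x, \bQ_\ell)) = 2 - \# \overline{F}_x^g \in \bZ$ (interpreting the $g = 1$ term as $\dim H^1$) and $\operatorname{tr}(g \mid \bQ_\ell[D]) = \# D^g \in \bZ$, whence the character of $R\Psi_{Y/S}(\bQ_\ell)_x$ is integer-valued and its class has rational coefficients.

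For (ii), write $[R\Psi[1]] = [R^1\Psi] - [\bQ_\ell]$: for any non-trivial irreducible $\bQ_\ell[\St_x]$-representation $V$ the multiplicity of $V$ in $[R\Psi[1]]$ equals $\dim \Hom_{\St_x}(V, R^1\Psi) \geq 0$ automatically, so effectivity reduces to the single inequality $\dim H^1(F_x, \bQ_\ell)^{\St_x} \geq 1$. Freeness on $F_x$ gives $H^1(F_x, \bQ_\ell)^{\St_x} \cong H^1(F_x/\St_x, \bQ_\ell)$, and $F_x/\St_x$ is the Milnor fiber at the image of $x$ in the quotient normal relative curve $Y/\St_x \to S$. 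Saito's non-vanishing Theorem \ref{Saito-Vanishing} applied to this quotient supplies the required inequality, provided $Y/\St_x \to S$ fails to be smooth at the image of $x$. Verifying this non-smoothness of the quotient---namely that a $\St_x$-action fixing a non-smooth point of a normal relative curve produces again a non-smooth point after quotienting---is the principal obstacle; once it is secured, the effectivity of the class follows immediately from the class identity above.
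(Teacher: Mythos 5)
Your treatment of (i) is fine as far as it goes (over the semisimple ring $\bQ_\ell[\St_x]$ perfectness reduces to finiteness and the degree bound), but note that the $\St_x$-equivariant object $R\Psi_{Y/S}(\bQ_\ell)_x$ is itself only defined via the torsion-level perfectness of \ref{Projective-TorsionCoeffs}(i) and the limit construction of \ref{Z_ell-Coeffs}--\ref{l-adic NearbyCycles}, so the Maschke shortcut presupposes the harder part of what the paper proves there. The genuine gap in your (iii) is the compactification step: the Milnor fiber $F_x=\Spec(\cO_{Y,x}^{\mathrm{sh}})\times_S\overline{\eta}$ is \emph{not} of finite type over $\overline{\eta}$ (it is the generic fiber of a strict henselization), so it is not a smooth affine curve in any sense admitting an ``equivariant smooth compactification with finite boundary,'' and the Lefschetz fixed point formula cannot be applied to such an $\overline{F}_x$ without first constructing it. This is exactly where the paper brings in the $G$-equivariant semistable (MN-)model $W\to Y$ as in \ref{Saito-Inequality}: proper base change transfers the stalk to $R\Gamma(E,R\Psi_{W/S}\bQ_\ell\lvert E)$ on the exceptional divisor $E$, a proper curve over the residue field, where the Lefschetz formula on the normalized components and the SGA 7, XV duality sequence for the Tate-twisted $R^1\Psi_W$ term give integrality of all traces (\ref{Rationality-of-NearbyCycles}). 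Your class identity has the right shape (it also silently uses purity/duality to pass between $R\Gamma$ and $R\Gamma_c$), but without a substitute for $\overline{F}_x$ --- which amounts to invoking semistable reduction as the paper does --- (iii) is not proved.

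For (ii), your reduction to the single inequality $\dim H^1(F_x,\bQ_\ell)^{\St_x}\geq 1$ is correct and is in substance the trivial-isotypic part of the paper's argument, but the step you flag as the ``principal obstacle'' is not just unverified: it fails under the stated hypotheses. In equal characteristic $p$ take $Y=\Spec\bigl(\cO_K[T,V]/(V^p-\pi^{a(p-1)}V-T^m)\bigr)$ with $\gcd(m,p)=1$, $m\geq 2$, $a\geq 1$, and $G=\St_x=\bZ/p$ acting by $V\mapsto V+\pi^a$; the action is free on the generic fiber, $Y$ is normal and smooth over $S$ away from the origin $x$, yet $Y/G\cong\bA^1_{\cO_K}$ is smooth at the image of $x$. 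By \ref{NearbyCycles-QuotientMap}(i) one then has $H^1(F_x,\bQ_\ell)^{G}\cong R^1\Psi_{(Y/G)/S}(\bQ_\ell)_o=0$, so $[R\Psi_{Y/S}(\bQ_\ell)_x[1]]$ has coefficient $-1$ on the trivial character and is not effective. Thus the non-smoothness of $Y/\St_x$ at the image point is an additional hypothesis, not a consequence of the others; it does hold in the paper's application, where the image point is the ordinary double point of the annulus model (\ref{Annulus-II}, \ref{Singular-Points}), and the paper's own proof uses it tacitly, since the constant-sheaf case of \ref{Euler-Poincaré-Negatif} invokes \ref{Saito-Vanishing} on the quotient curve, which requires exactly this non-smoothness. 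So your route for (ii) cannot be completed as written; to repair it you must add (or extract from the intended geometric situation) the hypothesis that the quotient is not smooth at $o$, and also say a word about transporting Saito's non-vanishing from $\bF_\ell$- to $\bQ_\ell$-coefficients, which the paper handles through the perfectness of the integral stalk.
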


\noindent (See \ref{Z_ell-Coeffs} and \ref{l-adic NearbyCycles} for the definition of $R\Psi_{Y/S}(\bQ_\ell)_x$). Thanks to \cite[13.1,  Théorème 30]{Serre2}, the combination of the nearby cycles formula,  (the general form of) the identity \eqref{Intro-Strategy-II 2} and \ref{Intro-Perfection-Projectivity-Rationality} (iii) yields the following, aforementioned, geometric realization.

\begin{thm}[{Théorème \ref{Realization-NearbyCycles-Swan}}]
	\label{Intro-Projective Realization}
Denoting again by $Y/S'$ the algebraization of the compactification of the normalized integral model of $f^{-1}(A(t', t))$ over $S'$, we have the identity
\begin{equation}
	\label{Intro-Projective Realization 1}
[\bigoplus_{x} R\Psi_{Y/S'}(\bQ_\ell)_x]=\widetilde{\sw}_f^{\beta}([t, t']).
\end{equation}
\end{thm}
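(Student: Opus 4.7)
The plan is to establish the identity by showing that both sides, viewed as elements of $R_{\bQ_\ell}(G)\otimes\bQ$, have matching pairings against every permutation character $\Ind_H^G 1_H$ as $H$ ranges over the subgroups of $G$, and then to invoke \cite[13.1, Théorème 30]{Serre2} as the detection mechanism. By Proposition \ref{Intro-Perfection-Projectivity-Rationality}, parts (ii) and (iii), the class $[\bigoplus_{x} R\Psi_{Y/S'}(\bQ_\ell)_x]$ already lies in $P_{\bQ_\ell}(G)$ and has rational coefficients in the basis of indecomposable projectives; it is therefore a bona fide rational virtual projective character, and the cited Serre theorem tells us that such a class is determined by the collection of integers $\langle\cdot,\Ind_H^G 1_H\rangle$, $H\subseteq G$.

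For a fixed $H\subseteq G$, the pairing on the left-hand side computes $\sum_x \dim_{\bQ_\ell}(R\Psi_{Y/S'}(\bQ_\ell)_x)^H$ (alternating in cohomological degrees). Since the $G$-action is admissible and free on the generic fiber, the finite quotient $Y\to Y/H$ identifies $\big(\bigoplus_{x\mapsto\bar x} R\Psi_{Y/S'}(\bQ_\ell)_x\big)^H$ with the stalk $R\Psi_{(Y/H)/S'}(\bQ_\ell)_{\bar x}$ at the image $\bar x\in Y/H$, provided one checks (after possibly enlarging $K'$) that $Y/H$ \emph{is} the algebraization of the compactification of the normalized integral model of $(X/H)\to C$. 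Summing over the $\bar x$'s and applying the generalized nearby cycles formula from \ref{Intro-Strategy-I} to the still-étale quotient morphism $f/H:X/H\to C$ (the horizontal-ramification terms vanish because $f$, and hence $f/H$, is étale), the pairing thus becomes $\frac{d}{dt}\partial_{f/H}(t+) - \frac{d}{dt}\partial_{f/H}(t'-)$.

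On the other side, the general form of identity \eqref{Intro-Strategy-II 2} — with $f$ replaced by $f/H$ and $r_G$ replaced by $\Ind_H^G 1_H$, as described at the end of \ref{Intro-Strategy-II} — expresses this same variation as $\langle \widetilde{\sw}_f^\beta([t,t']),\Ind_H^G 1_H\rangle$. The two pairings therefore coincide for every subgroup $H$, and Serre's theorem closes the argument. I expect the main technical obstacle to lie in the compatibility step of the second paragraph, namely the verification that the finite quotient $Y\to Y/H$ is faithfully compatible with the normalization, algebraization and compactification that define the model for $X/H\to C$, and that the nearby cycles on $Y/H$ commute with passage to $H$-invariants on $Y$. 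This rests on the admissibility hypothesis on the $G$-action, on careful control of the semi-stable decomposition \eqref{Intro-Strategy-I 1} under the quotient (possibly after a further finite base change of $K'$), and on the classical good behavior of nearby cycles under admissible finite group quotients.
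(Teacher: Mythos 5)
Your proposal is correct and is essentially the paper's own argument: the paper likewise pairs the difference of the two sides against permutation characters $\bQ[G/H]$, identifies the nearby-cycles side with stalks on the quotient $Y/H$ via passage to $H$-invariants (its Lemma \ref{NearbyCycles-QuotientMap}) and the nearby cycles and Kato formulas, computes the Swan side through the generalized discriminant-variation identity \eqref{DiscVar 3}, and concludes with the rationality statement \ref{Rationality-of-NearbyCycles} and Serre's Théorème 30. The only notable difference is that the paper establishes the quotient/invariants compatibility only for abelian subgroups $H$, which suffices because Artin's induction theorem requires only cyclic subgroups, whereas you assert it for all $H$ (true, but needing a slightly different argument than the one the paper gives).
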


\noindent Let us mention that less general formulations of the statements in \ref{Intro-Perfection-Projectivity-Rationality} appear in the work of Ramero (in a rigid geometry setting) \cite[3.2.15, 3.2.17]{Ramero}.  We think that these statements may be of independent interest and thus deserve a more general algebraic formulation.
The proofs,  which adapt classical arguments from \cite{SGA4, SGA5, SGA7},  also follow the template in \cite{Ramero}. 
\newline
An identity similar to \eqref{Intro-Projective Realization} is also proved by Ramero \cite[3.3.16]{Ramero} but with a different,  and somewhat ad-hoc,  ramification theory due to Huber. \newline

\noindent Theorem \eqref{Intro-Projective Realization} shows that the class function $\widetilde{\sw}_f^{\beta}([t, t'])$ is in fact the character of a projective $\bQ_\ell[G]$-module,  which yields the convexity of the function $t\to \langle\widetilde{a}_f(t),  \chi \rangle$.\newline
Since the main result \ref{Intro-Thm-Faisceau-Meromorphe} is formulated in the language of the ramification theory of Abbes and Saito,  we need the following comparison result to conclude.

\begin{prop}[{\textit{cf}.  \ref{Abbes-Saito-Kato}}]
	\label{Intro-Comparison AbbesSaito Kato}
With the notation of \ref{Intro-Faisceau-Meromorphe},  let $\chi_{\cF}^{[t]}$ be the character of $\rho_{\cF}^{[t]}$.  Then,  the following statements hold.
\begin{itemize}
\item[(i)] For a rational number $t\in [r, r']$,  we have the identity
\begin{equation}
	\label{Intro-Comparison AbbesSaito Kato 1}
\langle\widetilde{a}_f(t),  \chi_{\cF}^{[t]} \rangle=\sw_{\AS}(\cF,  t).
\end{equation}
\item[(ii)] For rational numbers $r\leq t < t' \leq r'$,  we have the identity
\begin{equation}
	\label{Intro-Comparison AbbesSaito Kato 2}
\langle \widetilde{\sw}_f^{\beta}([t, t']),  \chi_{\cF}^{[t]} \rangle=\varphi_s(\cF, t) - \varphi_s(\cF, t').
\end{equation}
\end{itemize}
\end{prop}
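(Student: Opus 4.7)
The plan is to derive both identities from a local compatibility between Kato's ramification theory for $\bZ^2$-valuation rings (used to build $\widetilde{a}_f(t)$ and $\widetilde{\sw}_f^{\beta}([t,t'])$) and the Abbes--Saito theory defining $\sw_{\AS}(\cF,\cdot)$ and $\varphi_s(\cF,\cdot)$, together with Theorem \ref{Intro-Projective Realization}. For part (i), the class function $\widetilde{a}_f(t)$ is constructed orbit by orbit on $G^{[t]}$: over the geometric generic point $\overline{\p}^{(t)}$ of $\scrC_{s'}^{[t]}$, it is the induction to $G^{[t]}$ of a Kato-theoretic class function of the stabilizer $G_{\overline{\q}^{(t)}}\simeq\Gal$ of the henselian extension $\cO_{\scrC_{K'}^{[t]},\overline{\p}^{(t)}}\to\cO_{\fX_{K'}^{[t]},\overline{\q}^{(t)}}$. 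By the standard induction formula for Artin/Swan class functions and Frobenius reciprocity, one obtains
\[
\langle\widetilde{a}_f(t),\chi_{\cF}^{[t]}\rangle_{G^{[t]}}=\langle a_{\mathrm{Kato}}(G_{\overline{\q}^{(t)}}),\chi_{M_{\overline{\q}^{(t)}}}\rangle_{G_{\overline{\q}^{(t)}}}.
\]
One then invokes the comparison between Kato's and Abbes--Saito's ramification theories at the henselian discrete valuation field attached to $\overline{\q}^{(t)}$, whose residue field is a one-dimensional function field over the algebraically closed field $k$; this comparison was carried out in \cite[\S 7]{VCS} for the analogous case of lisse sheaves on the disc $D$, and the same argument adapts. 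The right-hand side is thereby identified with $\sw^{\AS}_{G_{\overline{\q}^{(t)}}}(M_{\overline{\q}^{(t)}})=\sw_{\AS}(\cF,t)$, which proves \eqref{Intro-Comparison AbbesSaito Kato 1}.

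For part (ii), the key input is the projective realization
\[
\widetilde{\sw}_f^{\beta}([t,t'])=\bigl[\bigoplus_{x}R\Psi_{Y/S'}(\bQ_\ell)_x\bigr]
\]
of Theorem \ref{Intro-Projective Realization}, where the sum runs over the points $\ox$ of the special fiber above the two boundary origins of $\scrC_{K'}^{[t,t']}$. Pairing both sides with $\chi_{\cF}^{[t]}$ and applying Frobenius reciprocity at each stabilizer $\St_x$ expresses the left-hand side of \eqref{Intro-Comparison AbbesSaito Kato 2} as a sum, indexed by the two radii, of dimensions of nearby cycles stalks twisted by the representation $\rho_{\cF}^{[t]}$. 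On the other hand, by Hu's interpretation \cite{Hu} of the Abbes--Saito characteristic cycle at $\overline{\p}^{(t)}$ as the order of a differential form measuring a nearby cycles dimension, the quantities $\varphi_s(\cF,t)$ and $-\varphi_s(\cF,t')$ admit analogous expressions, one per boundary radius. The term-by-term matching yields \eqref{Intro-Comparison AbbesSaito Kato 2}, the sign accounting for the orientation convention between the inner and outer radii and the tame corrections $-m_t$, $-m_{t'}$ being absorbed into the tame parts of the respective nearby cycles stalks.

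The main obstacle is the local Kato versus Abbes--Saito comparison at the generic point of $\scrC_{s'}^{[t]}$. The residue field there is imperfect, and the two theories use a priori different filtrations and normalizations; one must carefully track uniformizers, logarithmic ramification breaks, and the tame-wild decomposition, and check that induction from $G_{\overline{\q}^{(t)}}$ to $G^{[t]}$ is compatible with the constructions on both sides. A secondary technical point in part (ii) is to ensure that the horizontal ramification correction terms present in the nearby cycles formula of \ref{Intro-Strategy-I} cancel exactly against the tame corrections $-m_t$ and $-m_{t'}$ in the definition \eqref{Intro-Faisceau-Meromorphe 2} of $\varphi_s(\cF,\cdot)$, so that the pairing identity holds on the nose and not merely up to a bounded error.
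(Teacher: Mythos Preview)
Your argument for part~(i) is essentially the paper's: reduce by Frobenius reciprocity to the local pairing at a chosen $\overline{\q}^{(t)}$, then invoke the Kato/Abbes--Saito comparison from \cite[8.24]{VCS}. That is fine.

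For part~(ii), however, you take an unnecessary detour and the central step is not well-founded as stated. You propose to input the projective realization $\widetilde{\sw}_f^{\beta}([t,t'])=[\bigoplus_x R\Psi_{Y/S'}(\bQ_\ell)_x]$ and then match the pairing with $\chi_{\cF}^{[t]}$ against $\varphi_s(\cF,t)-\varphi_s(\cF,t')$ via ``Hu's interpretation of the Abbes--Saito characteristic cycle as the order of a differential form measuring a nearby cycles dimension''. There is no such interpretation in \cite{Hu}: the relevant result there (\cite[10.4]{Hu}) is a comparison of \emph{characteristic cycles} in Kato's and Abbes--Saito's theories, not an expression of either as a nearby cycles dimension. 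So the term-by-term matching you sketch has no black box to appeal to, and your worry about cancelling ``horizontal ramification corrections'' against the $-m_t,-m_{t'}$ is addressing a problem that does not arise.

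The paper's proof of (ii) is entirely parallel to (i) and does not use Theorem~\ref{Intro-Projective Realization} at all. One applies the \emph{same} local comparison \cite[8.24]{VCS} separately at each boundary radius. At the outer radius $t$ this gives directly
\[
\varphi_s(\cF,t)=-\ord_{0^{(t)}}(\CC_\psi(M_{\overline{\q}^{(t)}}))-m_t=\langle \widetilde{\sw}_{f^{[t,t']}}^{\beta},\chi_{\cF}\rangle.
\]
At the inner radius $t'$ the class function $\widetilde{\sw}_{f^{[t,t']}}^{\prime\beta}$ is built from the branch seen in the coordinate $\xi^{-1}$, so the local comparison yields $-\ord_{\infty^{(t')}}(\CC_\psi(M_{\overline{\q}^{(t')}}))-m_{t'}$ rather than the expression with $\ord_{0^{(t')}}$. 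The elementary identity $d(1/\xi)=-\xi^{-2}\,d\xi$ gives, for any nonzero $\omega\in(\Omega^1_{\kappa(\overline{\p}^{(t')})})^{\otimes m_{t'}}$,
\[
\ord_{\infty^{(t')}}(\omega)=-\ord_{0^{(t')}}(\omega)-2m_{t'},
\]
and hence $\varphi_s(\cF,t')=-\langle \widetilde{\sw}_{f^{[t,t']}}^{\prime\beta},\chi_{\cF}\rangle$. Subtracting the two equalities gives \eqref{Intro-Comparison AbbesSaito Kato 2}. The only genuinely delicate point is this orientation computation at $t'$; the projective realization plays no role here (in the paper's logic it is used for convexity, not for this comparison).
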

The proof of these identities comes down to an application of \cite[8.25]{VCS},  a result which uses crucially the comparison of characteristic cycles in \cite[10.4]{Hu} linking Kato's ramification theory to Abbes and Saito's.

\subsection{} \label{Intro-Organisation-Texte}
The article is organized as follows. In \S \ref{Kato-Hurwitz formula}, we recall a few properties of the category of formal étale local rings studied in \cite[\S 3]{VCS}, which is a formal analogue of an algebraic category studied by Kato in \cite[\S 5]{K1}. We give a detailed proof of the Kato-Hurwitz formula for morphisms of this category, following closely Kato's algebraic proof.  Section \ref{TNCF} is devoted to the nearby cycles formula, after the requisite preparation.  Section \ref{Positivity} gives a proof the non vanishing of $R\Psi_{Y/S}(\Lambda)_x$ at non-smooth points (due to T.  Saito) and deduces some positivity consequences (\ref{Intro-Discriminant-Convex},  \ref{Euler-Poincaré-Negatif}); it also contains the proofs of Theorem \ref{Intro-Perfection-Projectivity-Rationality}.
In section \ref{Variation-Conductors},  the identities \eqref{Intro-Strategy-I 1} and \eqref{Intro-Strategy-II 2} are proved and the ensuing variational behavior for $t\to \langle\widetilde{a}_f(t), \chi\rangle$ is derived.  The last section hearkens back to the set up in \ref{Intro-Faisceau-Meromorphe}: the comparison \ref{Intro-Comparison AbbesSaito Kato} is established
and Theorem \ref{Intro-Faisceau-Meromorphe} is deduced from the aforementioned variation.

\noindent \subsection*{Acknowledgements} The bulk of this work was part of the author's PhD dissertation at the Université Paris-Saclay's \'Ecole Doctorale Mathématiques Hadamard and was prepared at the Institut des Hautes \'Etudes Scientifiques.  He thanks these institutions for their funding and hospitality respectively.  The author is very much indebted to his then advisor Ahmed Abbes for his careful reading,  numerous remarks and corrections.  He also thanks T. Saito for sharing his proofs of \ref{Saito-Inequality} and \ref{Saito-Vanishing} and allowing their inclusion here.

\section{Notations and Conventions} \label{Notations and Conventions}

Let $K$ be a complete discrete valuation field, $\cO_K$ its valuation ring, $\m_K$ its maximal ideal, $k$ its residue field, assumed to be algebraically closed of characteristic $p>0$, and $\pi$ a uniformizer of $\cO_K$. 
Let also $\oK$ be a separable closure of $K$, $\cO_{\oK}$ the integral closure of $\cO_K$ in $\oK$, $\overline{k}$ its residue field, $G_K$ the Galois group of $\oK$ over $K$, and $v_K:\oK^{\times}\to \bQ$ the valuation of $\oK$ normalized by $v_K(\pi)=1$. Let $D=\Sp(K\{\xi\})$ be the closed rigid unit disc over $K$.
For rational numbers $r'\geq r\geq 0$, we denote by $A(r', r)=A_K(r', r)$ the closed sub-annulus of $D$ defined by $r'\geq v_K(\xi)\geq r$ and by $A^{\circ}(r', r)$ the open sub-annulus given by $r'> v_K(\xi)>r$. We put $\scrS=\Spf(\cO_K)$ and denote by $s$ its unique point.
All formal schemes are assumed to be locally noetherian.

\section{Kato-Hurwitz formula}\label{Kato-Hurwitz formula}

\subsection{}\label{Property (P)-C_K}
For a formal relative curve $\fX/\scrS$ and a geometric point $\ox$ of $\fX$ \cite[2.5]{VCS} with image a closed point $x$ of $\fX$, we consider the following property : 
\begin{itemize}
\item[] {\hfil (P) \quad $\fX-\{x\}$ is smooth over $\scrS$ and $\fX$ is normal at $x$.}
\end{itemize}

We denote by $\widehat{\cC}_K$ the category whose objects are the rings that are isomorphic over $\cO_K$ to the formal étale local rings $\cO_{\fX, \ox}$ \cite[2.5]{VCS} for some couple $(\fX/\scrS, \ox)$ satisfying property (P), where $\fX$ is a formal relative curve over $\scrS$ and $\ox$ is a geometric point of $\fX$ over a closed point $x$ of $\fX$. The morphisms of $\widehat{\cC}_K$ are the finite $\cO_K$-homomorphisms inducing separable extensions of fields of fractions. By \cite[3.17]{VCS}, if $A$ is an object of $\widehat{\cC}_K$, then $A$ is a two-dimensional, henselian, normal, local ring with residue field $k$ and special fiber $A/\m_KA$ reduced and excellent.
We refer  the reader to \cite[2.5-2.12 and 3.17-3.23]{VCS} for other relevant properties of the category $\widehat{\cC}_K$ and its objects, when needed, as well as for the definition of its motivating algebraic analog $\cC_K$ introduced by Kato in \cite[\S 5]{K1}.

We briefly recall a few definitions associated to an object $A$ and a morphism $A\to B$ of $\widehat{\cC}_K$ (valid verbatim for $\cC_K$). We denote by $P(A)$ the set of height one prime ideals of $A$, by $P_s(A)$ its (finite) subset of prime ideals above $\m_K$ and by $P_{\eta}(A)$ the complement of $P_s(A)$ in $P(A)$. The quotient $A_0=A/\m_K A$ is a reduced ring \cite[3.17 (ii)]{VCS}; let $\widetilde{A_0}$ be its integral closure in its total ring of fractions. We denote by $\delta	(A)$ the $k$-dimension of the quotient vector space $\widetilde{A_0}/A_0$. The ring $A_K=A\otimes_{\cO_K}K$ is a Dedekind domain \cite[3.17 (i)]{VCS}, and the homomorphism $A_K\to B_K$ induced by $A\to B$ is an extension of Dedekind domains, hence projective. The corresponding bilinear trace map induces a well-defined $K$-linear determinant homomorphism
\begin{equation}
	\label{Property (P)-C_K 1}
 T_{B_K/A_K}: {\rm det}_{A_K} (B_K)\otimes_{A_K} {\rm det}_{A_K} (B_K)\to A_K,
\end{equation} 
where $\det_{A_K} (B_K)$ is the invertible $A_K$-module $\bigwedge_{A_K}^r B_K$, with $r$ the rank of $B_K$ as an $A_K$-module \cite[3.24-3.25]{VCS}. Following Kato \cite[\S 5]{K1}, we define the integer $d_{\eta}(B/A)$ to be the dimension of the cokernel of $T_{B_K/A_K}$.

For $\p\in P(A)$, $A_{\p}$ is a discrete valuation ring with residue field $\kappa(\p)$; we denote by $w_{\p}: {\rm Frac}(A)^{\times}\to \bZ$ the associated normalized valuation map. Moreover, $\kappa(\p)$ is a discrete valuation field. Indeed, if $\p\in P_s(A)$, the integral closure of the henselian Japanese ring $A/\p$ in $\kappa(\p)$ is a discrete valuation ring; if $\p\in P_{\eta}(A)$, $\kappa(\p)$ is a finite extension of $K$. In both cases, we denote by $\ord_{\p}: \kappa(\p)^{\times} \to \bZ$ the associated normalized valuation map.

For any $\p\in P_s(A)$, the couple $(A, \p)$ gives rise to a $\bZ^2$-valuation ring $V_A(\p)$ and to a henselian $\bZ^2$-valuation ring $V_A^h(\p)$, the henselization of $V_A(\p)$ \cite[3.18]{VCS}, whose field of fractions we denote by $\bK_A^h(\p)$. We let $v_{\p}: (\bK_A^h(\p))^{\times}\to \bZ^2$ be the associated normalized valuation map and $v_{\p}^{\alpha}$ (resp. $v_{\p}^{\beta}$) the composition of $v_{\p}$ with the first (resp. second) projection $\bZ^2\to \bZ$ \cite[3.7]{VCS}.

For any $\q\in P_s(B)$ above $\p$, the valuation ring $V_B^h(\q)$ is the integral closure of $V_A^h(\p)$ in $\bK^h_B(\q)$. It is a finite free $V_A^h(\p)$-module and $V_B^h(\q)=V_A^h(\p)[b]$ for some $b\in V_B^h(\q)$) \cite[3.23]{VCS}. Therefore, we have again a well-defined $K$-linear determinant homomorphism $T_{V_B^h(\q)/V_A^h(\p)}$, whose image is a nonzero principal ideal of $V_A^h(\p)$. Let $c(B/A, \p, \q)$ be a generator of this image. Following Kato \cite[\S 5]{K1}, we have a well-defined integer
\begin{equation}
	\label{Property (P)-C_K 2}
d_s(B/A)=\sum_{(\p, \q)} v^{\beta}_{\p}(c(B/A,\p,\q)), 
\end{equation}
where $\p$ runs over $P_s(A)$ and $\q$ runs over the elements of $P_s(B)$ above $\p$.

\begin{lem}[{\cite[Lemma 5.8]{K1}}]
	\label{K-theory}
Let $A$ be an object of $\mathcal{C}_K$ $($resp. $\widehat{\mathcal{C}}_K)$ and denote by $\bK$ its field of fractions. Then, for any $x\in \bK^{\times}$, we have
\begin{equation}
	\label{K-theory 1}
\sum_{\p\in P_s(A)} v_{\p}^{\beta}(x)=\sum_{\p\in P_{\eta}(A)} [\kappa(\p): K] w_{\p}(x).
\end{equation}
\end{lem}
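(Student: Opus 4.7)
Both sides of \eqref{K-theory 1} define homomorphisms $\bK^{\times} \to \bZ$ (since each $w_{\p}$ and $v_{\p}^{\beta}$ is a valuation), and both vanish on $A^{\times}$: for $u \in A^{\times}$, one has $u \notin \m_A \supset \p$ for every $\p \in P(A)$, so $w_{\p}(u) = 0$; moreover $\overline{u} \in A/\p$ is a unit, hence a unit in the integral closure $(A/\p)^{\sim}$, giving $\ord_{\p}(\overline{u}) = 0$ and $v_{\p}^{\beta}(u) = 0$. By multiplicativity, writing $x = f/g$ with $f, g \in A\setminus\{0\}$ reduces the proof to the case $x=f \in A\setminus\{0\}$. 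Since $A_K = A \otimes_{\cO_K} K$ is a Dedekind domain (\cite[3.17 (i)]{VCS}), the Chinese remainder theorem gives
\begin{equation*}
\dim_K(A_K/fA_K) \ = \ \sum_{\p \in P_{\eta}(A)} [\kappa(\p):K] \cdot w_{\p}(f),
\end{equation*}
so proving \eqref{K-theory 1} is equivalent to establishing $\sum_{\p \in P_s(A)} v_{\p}^{\beta}(f) = \dim_K(A_K/fA_K)$ for all $f \in A\setminus\{0\}$.

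The clean case is $\overline{f}\neq 0$ in $A_0=A/\pi A$ and $w_{\p}(f)=0$ for all $\p\in P_s(A)$: then $(f,\pi)$ is a regular sequence in the Cohen-Macaulay ring $A$ (normal of dimension $2$, hence $(S_2)$), so $A/fA$ is a $1$-dimensional Cohen-Macaulay $A$-module on which $\pi$ is a non-zerodivisor. Since $\overline{f}$ is a non-zerodivisor of $A_0$, the quotient $A_0/\overline{f}A_0$ has finite $k$-length, and Nakayama's lemma (applied over the $\pi$-adically complete $\cO_K$) shows that $A/fA$ is a finitely generated $\cO_K$-module. Being $\pi$-torsion-free, it is $\cO_K$-free of some rank $r$, with
\begin{equation*}
r \ = \ \dim_K(A_K/fA_K) \ = \ \ell_k(A_0/\overline{f}A_0).
\end{equation*}
On the other hand, the normalization $A_0 \hookrightarrow \widetilde{A_0}=\prod_{\p\in P_s(A)}(A/\p)^{\sim}$ has cokernel of finite $k$-length $\delta(A)$, and a snake-lemma argument (using that $\overline{f}$ is a non-zerodivisor in both $A_0$ and $\widetilde{A_0}$, so that multiplication by $\overline{f}$ on the finite-length cokernel has equal kernel- and cokernel-lengths) yields $\ell_k(A_0/\overline{f}A_0)=\ell_k(\widetilde{A_0}/\overline{f}\widetilde{A_0})=\sum_{\p\in P_s(A)}\ord_{\p}(\overline{f})$, using that all residue fields of the DVRs $(A/\p)^{\sim}$ equal $k$. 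Since $w_{\p}(f)=0$, this sum equals $\sum_{\p\in P_s(A)}v_{\p}^{\beta}(f)$, completing the clean case.

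The general case is handled by multiplicativity together with the reciprocity $\sum_{\p\in P_s(A)}v_{\p}^{\beta}(\pi)=0$, which is the case $f=\pi$ of the identity and is built into the normalization of the $\bZ^2$-valuations $v_{\p}$ chosen in \cite[3.7]{VCS}. Writing $f=\pi^a h$ with $\overline{h}\neq 0$ in $A_0$, the formula for $\pi$ reduces the problem to $h$. If $h$ has a vertical component at $\p_0\in P_s(A)$ (i.e., $w_{\p_0}(h)>0$), one strips it off by an analogous local reciprocity at $\p_0$, reducing after finitely many steps to the clean case handled above.

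\textbf{Main obstacle.} The decisive step is the identification $\ell_k(A/(f,\pi)A)=\dim_K(A_K/fA_K)$ in the clean case, which rests on the Cohen-Macaulay property of $A$ (so that $(f,\pi)$ is a regular sequence and $A/fA$ is $\cO_K$-torsion-free) and on the finiteness of $A_0/\overline{f}A_0$ (so that $A/fA$ is $\cO_K$-finite). The vertical-component case is more delicate because primes $\p\in P_s(A)$ need not be principal in $A$, so one cannot literally factor out their local generators; the reciprocity $\sum v_{\p}^{\beta}(\pi)=0$ and its local analogs at each vertical prime, encoded in the chosen normalization of the $\bZ^2$-valuations, are essential for closing the gap.
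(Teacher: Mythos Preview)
Your clean case is correct and nicely argued: when $\overline{f}$ is a non-zerodivisor in $A_0$, the Cohen--Macaulay property makes $(f,\pi)$ a regular sequence, $A/fA$ is $\cO_K$-free, and the snake-lemma comparison with the normalization $\widetilde{A_0}$ gives $\ell_k(A_0/\overline{f}A_0)=\sum_{\p\in P_s(A)}\ord_{\p}(\overline{f})$. This is a genuinely different, more elementary route than the paper's.

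However, the reduction to the clean case has a real gap. Writing $f=\pi^a h$ with $\overline{h}\neq 0$ in $A_0$ does \emph{not} ensure $\overline{h}$ is a non-zerodivisor; when $w_{\p_0}(h)>0$ for some $\p_0\in P_s(A)$ your argument breaks down because $(h,\pi)$ is no longer a regular sequence and $A/(h,\pi)$ has positive dimension. You invoke ``an analogous local reciprocity at $\p_0$'' to strip off the vertical component, but you never state or prove such a reciprocity, and since $\p_0$ need not be principal in $A$ there is no element playing the role of $\pi$ for $\p_0$. Your own ``Main obstacle'' paragraph concedes this without resolving it. To close the gap along your lines one would need, for each $h$, an auxiliary $g\in A$ with both $\overline{g}$ and $\overline{hg/\pi^c}$ non-zerodivisors in $A_0$ (a moving-lemma type statement), and then subtract; this is doable but is the actual work you have not supplied.

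The paper's proof avoids the case analysis entirely: it applies the $K$-theory localization complex $K_2(\bK)\to\bigoplus_{\p\in P(A)}\kappa(\p)^{\times}\to\bZ$ to the Steinberg symbol $\{x,\pi\}$. The composite $\sum_{\p}\partial'_{\p}\circ\partial_{\p}$ vanishes, and computing each tame symbol $\partial_{\p}(\{x,\pi\})$ and then $\ord_{\p}$ of it gives exactly $-v_{\p}^{\beta}(x)$ for $\p\in P_s(A)$ and $[\kappa(\p):K]w_{\p}(x)$ for $\p\in P_{\eta}(A)$. This is a one-line application of Weil reciprocity on a surface and handles all $x$ uniformly, with no need to separate out vertical components. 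Your approach would be more hands-on and closer to intersection theory on $\Spec(A)$ if completed, but as written the $K$-theory route is both shorter and complete.
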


\begin{proof}
We first note that the valuation map $w_{\p}$ coincide with $v_{\p}^{\alpha}$ for $\p\in P_s(A)$ \cite[3.7, 3.15]{VCS}. Denoting by $K_2(\bK)$ the second Milnor $K$-group of $\bK$, for any $p\in P(A)$, we have the Steinberg tame symbol $\partial_{\p}: K_2(\bK)\to \kappa(\p)^{\times}$ of $w_{\p}$. By definition, for any $x\in \bK^{\times}$, it satisfies
\begin{align}
\partial_{\p}(\{x, \pi\})=(-1)^{v_{\p}^{\alpha}(x)} x^{-1}\pi^{v_{\p}^{\alpha}(x)} \mod \p \quad {\rm if}~ \p\in P_s(A), \label{K-theory 2} \\
\partial_{\p}(\{x, \pi\})=\pi^{w_{\p}(x)} \mod \p \quad {\rm if}~ \p\in P_{\eta}(A). \label{K-theory 3}
\end{align}
Note that, for any $\p\in P(A)$, as $k$ is algebraically closed, the residue field of the discrete valuation ring associated to $\ord_{\p}: \kappa(\p)\to \bZ$ is of degree $d_{\p}=1$ over $k$.
If $\p\in P_s(A)$, then $\ord_{\p}(x\pi^{-v_{\p}^{\alpha}(x)}\mod \p)=v_{\p}^{\beta}(x)$ for any $x\in \bK^{\times}$ \cite[3.15]{VCS}.

The $K$-theory for $\Spec(A)$ yields a localization complex 
\begin{equation}
	\label{K-theory 4}
K_2(\bK)\xrightarrow{(\partial_{\p})_{\p}} \bigoplus_{\p\in P(A)}\kappa(\p)^{\times}\xrightarrow{(\partial'_{\p})_{\p}} \bZ=K_0(\kappa(\m_A)),
\end{equation}
where $\m_A$ is the maximal ideal of $A$ and $\partial'_{\p}: \kappa(\p)^{\times}\to \bZ$ is the composition of $\ord_{\p}$ with the multiplication map $\bZ\xrightarrow{\times d_{\p}} \bZ$. Then,
\begin{align}
\partial'_{\p}\circ\partial_{\p}(\{x, \pi\})=\ord_{\p}(x^{-1}\pi^{v_{\p}^{\alpha}(x)} \mod \p)=-v_{\p}^{\beta}(x) \quad {\rm if}~ \p\in P_s(A), \label{K-theory 5} \\
\partial'_{\p}\circ\partial_{\p}(\{x, \pi\})=[\kappa(\p): K] w_{\p}(x) \quad {\rm if}~ \p\in P_{\eta}(A). \label{K-theory 6}
\end{align}
Hence, equation \eqref{K-theory 1} follows from the relation $\sum_{\p\in P(A)} \partial'_{\p}\circ\partial_{\p}=0$ \eqref{K-theory 4}.
\end{proof}

\begin{lem}
	\label{Noether Normalization}
Let $\scrD=\Spf(\cO_K\{T\})$ be the formal closed unit disc over $\cO_K$ and $o$ the origin of its special fiber $\Spec(k[T])$. Then, for any object $A$ of $\widehat{\cC}_K$, there exists a morphism $\cO_{\scrD, o}\to A$ of the category $\widehat{\cC}_K$. 
\end{lem}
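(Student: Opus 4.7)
The plan is to produce the morphism in three stages: (1) choose a suitable parameter $T \in A$ making $(\pi, T)$ an $\m_A$-primary ideal, (2) use henselianness of $A$ to upgrade the resulting map $\cO_K[T] \to A$ to one from the formal étale local ring $\cO_{\scrD, o}$, and (3) check that this map is finite and induces a separable extension of fraction fields.

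By property (P) and \cite[3.17]{VCS}, the ring $A$ is a two-dimensional henselian normal local $\cO_K$-algebra with residue field $k$ and one-dimensional reduced excellent special fiber $A_0 = A/\pi A$. Prime avoidance applied to the finitely many minimal primes of $A_0$ produces an element $\bar T \in \m_{A_0}$ avoiding all of them; any lift $T \in \m_A$ then makes $A_0/(\bar T)$ artinian, and hence $(\pi, T)$ is $\m_A$-primary in $A$. The $\cO_K$-algebra homomorphism $\cO_K[T] \to A$ sending $T \mapsto T$ is local after localizing the source at $(\pi, T)$, so by the henselianness of $A$ it factors uniquely through $\cO_K[T]_{(\pi, T)}^h = \cO_{\scrD, o}$, yielding the desired local $\cO_K$-algebra map $\varphi \colon \cO_{\scrD, o} \to A$.

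To verify that $\varphi$ is finite, I would pass to $\m_A$-adic completions. The completion $\widehat{A}$ is a complete Noetherian local ring of dimension two in which $(\pi, T)$ remains a system of parameters, so Cohen's structure theorem implies $\widehat{A}$ is module-finite over its subring $\cO_K[[T]] = \widehat{\cO}_{\scrD, o}$. Choose $a_1, \ldots, a_n \in A$ whose images span the finite-dimensional $k$-vector space $\widehat{A}/(\pi, T)\widehat{A}$; by Nakayama, they generate $\widehat{A}$ as a $\cO_K[[T]]$-module. Setting $M = \sum_i \cO_{\scrD, o} \cdot a_i \subset A$, we obtain $M \widehat{A} = \widehat{A}$ and hence $(A/M) \otimes_A \widehat{A} = 0$; the faithful flatness of $A \to \widehat{A}$ then forces $A/M = 0$, so $A$ is finite over $\cO_{\scrD, o}$.

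For separability of the induced extension $\mathrm{Frac}(\cO_{\scrD, o}) \hookrightarrow \mathrm{Frac}(A)$, the mixed-characteristic case is automatic. In equal characteristic $p$, the generic fiber $A_K$ is a smooth one-dimensional $K$-algebra by property (P), so $\Omega^1_{A_K/K}$ is locally free of rank one and one can refine the choice of $T$ so that $dT$ generates $\Omega^1_{A_K/K}$ at the generic point, rendering the extension generically étale and hence separable. I expect the most delicate step to be the descent of finiteness from $\widehat{A}$ to $A$, which requires a careful Nakayama-style approximation of generators together with faithful flatness; the separability refinement in equal characteristic is a secondary but non-trivial technical point.
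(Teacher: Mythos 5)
There is a genuine gap, and it occurs at the very point your plan pivots on: the identification $\cO_K[T]_{(\pi,T)}^{h}=\cO_{\scrD,o}$ is false. The object $\cO_{\scrD,o}$ is the \emph{formal} étale local ring of $\scrD=\Spf(\cO_K\{T\})$ at $o$, i.e. a henselization of the local ring of the restricted power series algebra $\cO_K\{T\}$ at $(\pi,T)$; it contains convergent power series that are transcendental over $K(T)$, so it is strictly larger than the henselization of the localized polynomial ring. Consequently, factoring $\cO_K[T]\to A$ through $\cO_K[T]_{(\pi,T)}^{h}$ by henselianness of $A$ does not produce any map out of $\cO_{\scrD,o}$: to evaluate an arbitrary element of $\cO_K\{T\}$ in $A$ one needs $\pi$-adic completeness, which $A=\cO_{\fX,\ox}$ does not have. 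This is exactly why the paper works globally on a formal model: it chooses the parameter as a global section $a\in\cO_{\fX}(\fX)$ (a complete admissible algebra), obtained by lifting a Noether normalization with \emph{separating} transcendence basis of the geometrically reduced special fiber $\fX_s$, so that $T\mapsto a$ defines an honest $\scrS$-morphism $g:\fX\to\scrD$; finiteness of $g$ comes from finiteness of $g_s$, flatness from normal $\Rightarrow$ Cohen--Macaulay, and separability of the fraction-field extension from generic étaleness of $g_s$, and only then does one pass to formal étale stalks.

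The finiteness-descent step is also invalid as written. With $M=\sum_i \cO_{\scrD,o}\,a_i$, the quotient $A/M$ is only an $\cO_{\scrD,o}$-module, so $(A/M)\otimes_A\widehat{A}$ does not make sense, and the intended substitute $(A/M)\otimes_{\cO_{\scrD,o}}\widehat{\cO}_{\scrD,o}$ cannot be computed as $\widehat{A}/M\widehat{A}$ without already knowing $A\otimes_{\cO_{\scrD,o}}\widehat{\cO}_{\scrD,o}\cong\widehat{A}$, i.e. without the finiteness you are trying to prove. The underlying principle is simply false: for $B=\cO_K[T]_{(\pi,T)}^{h}\subset A'=\cO_{\scrD,o}$ the map on $\m$-adic completions is an isomorphism $\cO_K[[T]]\xrightarrow{\sim}\cO_K[[T]]$, so your argument (with $a_1=1$) would "prove" $A'=B$, which is wrong; finiteness does not descend from completions in this naive way. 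Finally, the separability fix in equal characteristic ("refine $T$ so that $dT$ generates $\Omega^1_{A_K/K}$ at the generic point") is only a sketch and is not easy to carry out intrinsically in $A$; the paper avoids it by building separability into the choice of the normalization of $\fX_s$ (separating transcendence basis, $k$ algebraically closed) and transporting it to the generic fiber via \cite[2.4.7, 2.4.8]{EGR}.
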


\begin{proof}
Let $\fX/\scrS$ be an affine formal relative curve and $\ox\to \fX$ a geometric point at a closed point of $\fX$ such that the couple $(\fX/\scrS, \ox)$ satisfies property (P) and $A\cong \cO_{\fX, \ox}$. As $\fX_s$ is a geometrically reduced $k$-curve, \textit{Noether normalization with separating transcendence basis} \cite[16.18]{Eisenbud} implies that we have a finite morphism $g_s : \fX_s\to \Spec(k[T])$ which is generically étale. As $\ox$ is over a closed point of $\fX$ and $k$ is algebraically closed, after replacing $T$ by a translation $T-z$, for some $z\in k$, we can assume that $g_s(\ox)=o$. Let $a\in \cO_{\fX}(\fX)$ be a lift of the image of $T$ in $\cO_{\fX_s}(\fX_s)$ by $g_s$. Then, the assignment $T\mapsto a$ defines an $\scrS$-morphism $g: \fX\to \scrD$ that lifts $g_s$ and is thus finite. As the $\scrS$-curve $\fX$ is normal, it is also Cohen-Macaulay \cite[Discussion below 0.16.5.1]{EGA.IV}. It follows that $g$ is flat \cite[0.17.3.5 (i)]{EGA.IV}. Then, as $g_s$ is étale over a nonempty open admissible subset of $\fX_s$, so is $g$ \cite[2.4.7, 2.4.8]{EGR}.
Hence, the homomorphism of formal étale stalks $\cO_{\scrD, o}\to \cO_{\fX, \ox}$ induced by $g$ gives the desired morphism of $\widehat{\cC}_K$.
\end{proof}

\begin{prop} [{\cite[5.7]{K1}}]
\label{Kato-Formula}
Let $A\to B$ be a morphism of $\widehat{\cC}_K$. Then, with the notation of \ref{Property (P)-C_K}, we have the following formula
\begin{equation}
	\label{Kato-Formula 1}
d_{\eta}(B/A)-d_s(B/A)= 2\delta(B)-2\deg(B/A)\delta(A),
\end{equation}
where $\deg(B/A)$ is the degree of the extension of fields of fractions induced by $A\to B$.
\end{prop}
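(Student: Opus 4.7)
The plan is to adapt Kato's algebraic proof \cite[5.7]{K1}, valid in the category $\cC_K$, to the formal category $\widehat{\cC}_K$. The two key ingredients are the K-theoretic identity of Lemma \ref{K-theory} and the Noether normalization of Lemma \ref{Noether Normalization}. I would proceed in three steps: tower multiplicativity of $d_\eta$ and $d_s$; reduction via Noether normalization to a base case; and the base-case computation.

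First I would establish that, for a composition $A \to A' \to B$ in $\widehat{\cC}_K$, both invariants are additive in towers:
\[
d_\eta(B/A) = \deg(B/A')\, d_\eta(A'/A) + d_\eta(B/A'), \qquad d_s(B/A) = \deg(B/A')\, d_s(A'/A) + d_s(B/A').
\]
The first identity is the classical multiplicativity of the discriminant in the tower of Dedekind extensions $A_K \to A'_K \to B_K$; the second is its analogue at each vertical prime, applied to the henselian $\bZ^2$-valuation-ring towers $V_A^h(\p) \to V_{A'}^h(\p') \to V_B^h(\q)$, via the definition \eqref{Property (P)-C_K 2}. By Lemma \ref{Noether Normalization}, I then pick a morphism $A_0 := \cO_{\scrD, o} \to A$ in $\widehat{\cC}_K$. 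Since $A_0/\m_K A_0$ is a DVR, $\delta(A_0) = 0$; granting the formula for $A_0 \to A$ and $A_0 \to B$, the tower formulas yield the formula for $A \to B$, the $\delta(A_0)$-terms cancelling. Hence it suffices to prove
\[
d_\eta(B/A_0) - d_s(B/A_0) = 2\delta(B).
\]

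For the base case, pick a primitive element $b \in B$ of the finite separable extension $\operatorname{Frac}(A_0) \subset \operatorname{Frac}(B)$ (with $b \in B$ after clearing a denominator), and let $P \in A_0[X]$ be its minimal polynomial. Since $b$ is primitive, $B_K = A_{0,K}[b]$, so the $A_0$-module $B/A_0[b]$ is concentrated on the unique vertical prime. Apply Lemma \ref{K-theory} to $B$ (itself an object of $\widehat{\cC}_K$) with $x = P'(b) \in \operatorname{Frac}(B)^\times$:
\[
\sum_{\q \in P_s(B)} v_\q^\beta(P'(b)) = \sum_{\q \in P_\eta(B)} [\kappa(\q):K]\, w_\q(P'(b)).
\]
The right-hand side, via standard norm compatibility and the identity $N_{B/A_0}(P'(b)) = \pm\operatorname{disc}(A_0[b]/A_0) = \pm[B:A_0[b]]^2\operatorname{disc}(B/A_0)$ (together with the purely vertical support of $[B:A_0[b]]$), equals $d_\eta(B/A_0)$. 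The core of the proof is then to identify the left-hand side with $d_s(B/A_0) + 2\delta(B)$: the $d_s$-part is exactly the contribution of the $c$-generators controlling the discriminants of the $\bZ^2$-valuation-ring extensions $V_{A_0}^h((\pi)) \to V_B^h(\q)$, while the extra $2\delta(B)$ arises from the conductor-discriminant relation on the one-dimensional reduced excellent special fibre $B_s$, encoding how the reduction $\bar P$ factors across the branches of $B_s$ and detecting the conductor length of $B_s$ inside its normalization $\widetilde{B_s}$. Equating the two sides yields $d_\eta(B/A_0) - d_s(B/A_0) = 2\delta(B)$.

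The main obstacle is precisely the identification $\sum_{\q\in P_s(B)} v_\q^\beta(P'(b)) = d_s(B/A_0) + 2\delta(B)$ in the base case. The vertical sum decomposes into contributions from each branch of the singular fibre $B_s$, and tracking how these encode simultaneously the vertical discriminant and the ``glueing'' of branches at the singular locus of $B_s$---via the conductor-discriminant formula for the normalization $B_s \subset \widetilde{B_s}$---is the delicate heart of the argument. The remaining ingredients, namely the tower formulas, the reduction through Noether normalization, and the K-theoretic balance of Lemma \ref{K-theory}, are essentially formal in comparison.
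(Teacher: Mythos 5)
Your overall strategy matches the paper exactly: the tower (additivity) formulas for $d_\eta$ and $d_s$ coming from \cite[III, Prop.~8]{Serre1} and the functoriality \cite[3.22]{VCS}, followed by reduction via Lemma~\ref{Noether Normalization} to the case $A = \cO_{\scrD, o}$, where $\delta(A) = 0$ and $P_s(A)$ is a singleton. (Your "two-out-of-three" phrasing through $A_0 \to A$ and $A_0 \to B$ is the same argument the paper states abstractly for $g$, $h$, $h\circ g$.) The divergence is in the base case: the paper closes it immediately by invoking Lemma~\ref{K-theory} together with the external result \cite[3.27]{VCS}, whereas you attempt to re-derive that base case from scratch by applying Lemma~\ref{K-theory} to $P'(b)$ for a primitive element $b$.

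Your base-case sketch has a genuine gap. The claim "since $b$ is primitive, $B_K = A_{0,K}[b]$" is false in general: primitivity gives $\operatorname{Frac}(B) = \operatorname{Frac}(A_0)(b)$, not that $b$ generates the Dedekind domain $B_K$ as an $A_{0,K}$-algebra. So the quotient $B/A_0[b]$ need not be supported only on the vertical prime; the index $[B_K : A_{0,K}[b]]$ (the conductor of the order $A_{0,K}[b]$ inside $B_K$) can have horizontal support, and then $N_{B_K/A_{0,K}}(P'(b))$ differs from $\operatorname{disc}(B_K/A_{0,K})$ by a horizontal contribution that you cannot simply ignore. Fixing this requires either choosing $b$ more carefully (and justifying that a ring-generator of $B_K$ over $A_{0,K}$ exists, which is not automatic for Dedekind, non-semi-local bases), or replacing $P'(b)$ by an intrinsic generator of the different ideal $\mathfrak{d}_{B_K/A_{0,K}}$, which is what makes the norm computation come out cleanly.

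Beyond that, you candidly flag that the identification
\[
\sum_{\q\in P_s(B)} v_\q^\beta(\mathfrak{d}) = d_s(B/A_0) + 2\delta(B)
\]
is "the delicate heart of the argument" but leave it at the level of "conductor--discriminant formula on the special fibre." That identification is exactly the content that the paper offloads to \cite[3.27]{VCS}, so your proposal reproduces the reduction steps faithfully but does not actually supply a proof of the step the paper relies on externally; in its current state, the core computation is asserted rather than established, and the one concrete lemma you do state in the base case ($B_K = A_{0,K}[b]$) is incorrect.
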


\begin{proof}
Let $g: A\to B$ and $h: B\to C$ be morphisms of $\widehat{\cC}_K$. By \cite[III, Prop. 8]{Serre1}, we have
\begin{equation}
	\label{Kato-Formula 2}
d_{\eta}(C/A)=d_{\eta}(C/B) + \deg(C/B) d_{\eta}(B/A).
\end{equation}
As the formation of the two-dimensional henselian $\bZ^2$-valuation ring in \ref{Property (P)-C_K} is functorial \cite[3.22]{VCS}, \cite[III, Prop. 8]{Serre1} implies also that, for $\p\in P_s(A)$, $\q\in P_s(B)$ above $\p$ and $\r\in P_s(C)$ above $\q$, with the notation at the end of \ref{Property (P)-C_K}, we have
\begin{equation}
	\label{Kato-Formulal 3}
v_{\p}^{\beta}(c(C/A, \p, \r))=v_{\q}^{\beta}(c(C/B, \q, \r)) + \deg(B/A)v_{\p}^{\beta}(c(B/A, \p, \q)).
\end{equation}
Since, for each $\p\in P_s(A)$, the set of $\r\in P_s(C)$ above $\p$ is the disjoint union of the sets of $\r\in P_s(C)$ above $\q$, for $\q$ ranging over the subset of elements of $P_s(B)$ above $\p$, it follows that
\begin{equation}
	\label{Kato-Formulal 4}
d_s(C/A)=d_s(C/B) + \deg(C/B) d_s(B/A).
\end{equation}
Therefore, by \eqref{Kato-Formula 2} and \eqref{Kato-Formulal 4}, if \eqref{Kato-Formula 1} holds for two morphisms among $g$, $h$ and $h\circ g$, it holds also for the third. Hence, by \ref{Noether Normalization}, we can assume that $A=\cO_{\scrD, o}$ in the statement of the proposition. In particular, $A$ and $A_0=A/\m_KA$ are regular rings, $\delta(A)=0$ and $P_s(A)$ is a singleton. Then, the proposition follows from \ref{K-theory} and \cite[3.27]{VCS}.
\end{proof}

\begin{rem}
	\label{Kato-Strategy}
The above proof follows verbatim Kato's proof for the morphisms of $\cC_K$ \cite[5.7-5.8]{K1}.
\end{rem}

\section{The nearby cycles formula}\label{TNCF}

\subsection{}\label{Annulus-I}
For an integer $n\geq 0$, we let $C_n=C_{K, n}$ be the affinoid curve $\Sp(K\{\xi, \zeta\}/(\xi\zeta-\pi^n)$. 
It is the annulus in the closed unit disc $D$ defined by $n\geq v(\xi)\geq 0$. 
The admissible adic ring $\cO_K\{\xi, \zeta\}/(\xi\zeta-\pi^n)$ defines a formal model $\scrC_n=\scrC_{K, n}=\Spf(\cO_K\{\xi, \zeta\}/(\xi\zeta-\pi^n))$ of $C_n$ over $\cO_K$. 
The special fiber $\scrC_{n, s}=\Spec(k[\xi, \zeta]/(\xi\zeta))$ is a union of two copies of $\bA^1_k$ that intersect at the unique singular point $o=(0,0)$ of $\scrC_{n, s}$, an ordinary double point, and is geometrically reduced. 
It follows that $\cO_K\{\xi, \zeta\}/(\xi\zeta-\pi^n)$ is the unit ball of $K\{\xi, \zeta\}/(\xi\zeta-\pi^n)$
 for the sup-norm \cite[4.1]{A.S.1} and that $\scrC_n$ is the \textit{normalized integral model of} $C_n$ \textit{defined over} $\cO_K$ \cite[2.18-2.19]{VCS}. 
In particular, $\scrC_n$ is normal. 
Moreover, $\scrC_n-\{o\}$ is smooth over $\scrS$. 
Hence, the couple $(\scrC_n, o)$ satisfies property (P) in \ref{Property (P)-C_K}. 
Therefore, the formal étale local ring $\cO_{\scrC_n, o}$ is an object of the category $\widehat{\cC}_K$. 
If $\overline{\p}$ is a geometric generic point of $\scrC_{n, s}$, then its image $\p$ in $\scrC_{n, s}$ corresponds to a minimal prime ideal of $\cO_{\scrC_{n, s}, o}$; thus, the inverse image of $\p$ through the natural reduction map $\cO_{\scrC_n, o}\to \cO_{\scrC_{n, s}, o}$ of geometric stalks \cite[2.7.2]{VCS} is a height one prime ideal denoted $\p$ again. 

\subsection{}\label{Annulus-II} For the remainder of this section, we fix rational numbers $r'\geq r\geq 0$ and put $C=A(r', r)$, $C^{[r]}=A(r, r)$, $C^{[r']}=A(r', r')$, $C^{\circ}=A^{\circ}(r', r)$. There exist a finite extension $K'$ of $K$ and integers $m\geq n\geq 0$ such that $r, r' \in v(K')$ and 
\begin{equation}
	\label{Annulus-II 1}
C_{K'}=C \otimes_K K'\simeq A_{K'}(m, n)=\{x\in \oK~\lvert ~ m\geq v_{K'}(x)\geq n\},
\end{equation}
where $v_{K'}$ is the valuation of $K'$ normalized by $v_{K'}(\pi')=1$, for $\pi'$ a uniformizer of $K'$.
By the change of coordinate $\xi\mapsto \frac{\xi}{\pi_{K'}^n}$, we get an isomorphism $C_{m-n}\xrightarrow{\sim} C_{K'}$ over $K'$ and deduce the following formal isomorphism over $\scrS'=\Spf(\cO_{K'})=\{s'\}$
\begin{equation}
	\label{Annulus-II 2}
\scrC_{K', m-n}\xrightarrow{\sim} \scrC_{K'}.
\end{equation}
With \ref{Annulus-I}, the isomorphism \eqref{Annulus-II 2} shows that we have a distinguished geometric point $o$ of $\scrC_{K'}$, that $\scrC_{K'}$ is normal, that $\scrC_{K'}-\{o\}$ is smooth over $\scrS'$ and that the special fiber $\scrC_{s'}$ has exactly two irreducible components corresponding, via specialization, to the annuli with $0$-thickness $C_{K'}^{[r]}$ and $C_{K'}^{[r']}$, and intersecting at the unique singular point $o$ of $\scrC_{s'}$. 
Let $\p$ and $\p'$ be the corresponding generic points. 
Let $\overline{\p}$ and $\overline{\p}'$ be geometric generic points at $\p$ and $\p'$.
Let $V^h=V^h_{\cO_{\scrC, o}}(\p)$ (resp. $V^{\prime h}=V^h_{\cO_{\scrC, o}}(\p')$) be the henselian $\bZ^2$-valuation ring induced by the couple $(\cO_{\scrC, o}, \p)$ (resp. $(\cO_{\scrC, o}, \p')$) \eqref{Property (P)-C_K} and denote by $\bK^h$ (resp. $\bK^{\prime h}$) its field of fractions. 
Let $v: (\bK^h)^{\times}\to \bQ\times \bZ$ (resp. $v': (\bK^{\prime h})^{\times}\to \bQ\times \bZ$) be the composition of the normalized $\bZ^2$-valuation map $v_{\p}$ (resp. $v_{\p'}$) \eqref{Property (P)-C_K} with the injection $\bZ^2\to \bQ\times \bZ, ~ (a, b)\mapsto (a/e, b)$, where $e$ is the ramification index of $K'/K$. Let $v^{\alpha}$ and $v^{\beta}$ (resp. $v^{\prime\alpha}$ and $v^{\prime\beta}$) be the composition of $v$ (resp. $v'$) with the first and second projections $\bQ\times \bZ\to \bQ$ and $\bQ\times\bZ\to \bZ$.

\subsection{} \label{Models Formels}
We keep the notation of \ref{Annulus-II}.
Let $X$ be a smooth $K$-affinoid space and $f: X\to C$ a finite flat morphism.
As in \cite[4.9]{VCS}, with the same justifications (we don't need the étaleness assumption made in \textit{loc. cit.}), there exists a finite extension $K'$ of $K$ (taken to be larger than in \ref{Annulus-II}) such that $r, r'\in v_K(K')$ and, as for $C_{K'}$, the formal spectrum $\fX_{K'}=\Spf(\cO^{\circ}(X_{K'}))$ of the unit ball $\cO^{\circ}(X_{K'})$ of $\cO(X_{K'})$ for its sup-norm is an admissible formal model of $X_{K'}$ over $\cO_{K'}$, with a geometrically reduced special fiber. 
In other words, $\fX_{K'}$ is the normalized integral model of $X$ defined over $\cO_{K'}$.
In particular, $\fX_{K'}$ is normal.
Moreover, $\fX_{K'}$ is smooth over $\scrS'$ outside a finite set of its closed points.
Let $\hf_{K'} : \fX_{K'}\to \scrC_{K'}$ be the adic morphism induced by $f$, called \textit{the normalized integral model of $f$ over} $\cO_{K'}$. 
Then, the formation of $\scrC_{K'}$, $\fX_{K'}$ and $\hf_{K'}$ commute with further finite extensions of $K'$. 
An extension $K'$ of $K$ as above is said to be \textit{admissible for} $f$; from the aforementioned commutation, we see that any further extension of $K'$ is also admissible for $f$.

\begin{lem}
	\label{Generic-Étale}
Let $\fX$ and $\fY$ be formal relative curves over $\scrS$ and $\ox$ and $\oy$ respective geometric points of $\fX$ and $\fY$ over closed points such that the couples $(\fX/\scrS, \ox)$ and $(\fY/\scrS, \oy)$ satisfy property $($P$)$. 
Let $g: \fX\to \fY$ be a finite flat $\scrS$-morphism such that
$g(\ox)=\oy$.
\begin{itemize}
\item[(i)] If the generic fiber $g_{\eta}: \fX_{\eta}\to \fY_{\eta}$ is étale over a nonempty open admissible subset of $\fY_{\eta}$ contained in the tube of $\oy$, then $g$ induces a morphism $\cO_{\fY, \oy} \to \cO_{\fX, \ox}$ of the category $\widehat{\cC}_K$.
\item[(ii)] Let $G$ be a finite group with a right action on $\fX$. If $g: \fX \to \fY$ is $G$-equivariant and $\cO(\fX)^G\cong \cO(\fY)$, then $g$ induces a morphism $\cO_{\fY, \oy} \to \cO_{\fX, \ox}$ of $\widehat{\cC}_K$ whose extension of field of fractions is Galois of group a subgroup of $G$.
\end{itemize}
\end{lem}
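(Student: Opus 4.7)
The plan is to verify, in both (i) and (ii), the two defining properties of a morphism in $\widehat{\cC}_K$: finiteness as an $\cO_K$-algebra homomorphism, and separability of the induced extension of fields of fractions. Both $\cO_{\fY,\oy}$ and $\cO_{\fX,\ox}$ already lie in $\widehat{\cC}_K$ by property (P), and are in particular two-dimensional Noetherian normal local domains by \cite[3.17]{VCS}; write $\bK_Y$ and $\bK_X$ for their respective fields of fractions.

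Finiteness is the easy step, common to both parts. Since $g$ is finite, we may, in an affine neighborhood of $\oy$, write $\fX=\Spf(\cA)$ for a finite $\cO_{\fY}$-algebra $\cA$. Then $\cA\otimes_{\cO_{\fY}}\cO_{\fY,\oy}$ is finite over $\cO_{\fY,\oy}$, and since the latter is henselian \cite[3.17]{VCS}, it decomposes as a finite product of local $\cO_{\fY,\oy}$-algebras indexed by the geometric points of $\fX$ above $\oy$; the factor at $\ox$ is precisely $\cO_{\fX,\ox}$, whence finiteness of $\cO_{\fY,\oy}\to \cO_{\fX,\ox}$.

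For (i), the separability of $\bK_X/\bK_Y$ is obtained by translating the étaleness hypothesis on $g_\eta$ into étaleness on a nonempty open of $\Spec(\cO_{\fY,\oy}[1/\pi])$. The tube of $\oy$ identifies with the rigid generic fiber of $\Spf(\cO_{\fY,\oy})$, whose sections are obtained from $\cO_{\fY,\oy}$ by inverting $\pi$ and completing; a nonempty admissible open $V\subset\fY_\eta$ contained in this tube on which $g_\eta$ is étale therefore corresponds to a nonempty Zariski open of $\Spec(\cO_{\fY,\oy}[1/\pi])$ over which $\cO_{\fY,\oy}\to \cO_{\fX,\ox}$ becomes étale (invoking the algebraic--formal comparison \cite[2.4.7, 2.4.8]{EGR}). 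Since $\cO_{\fY,\oy}[1/\pi]$ is a Dedekind domain with fraction field $\bK_Y$ \cite[3.17]{VCS}, further localizing to the generic point shows that $\bK_X$ is a finite étale---hence separable---$\bK_Y$-algebra.

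For (ii), let $H\subset G$ be the stabilizer of $\ox$. By functoriality, $H$ acts on $\cO_{\fX,\ox}$ by $\cO_{\fY}$-algebra automorphisms. In the henselian decomposition $\cA\otimes_{\cO_{\fY}}\cO_{\fY,\oy}\cong \prod_{\ox'\in g^{-1}(\oy)} \cO_{\fX,\ox'}$ of the previous paragraph, $G$ permutes the factors along the orbit $G\cdot\ox$ with stabilizer $H$ at the factor $\cO_{\fX,\ox}$. Taking $G$-invariants on both sides, using $\cO(\fX)^G\cong \cO(\fY)$ together with the commutation of finite-group invariants with the flat base change $\cO_{\fY}\to \cO_{\fY,\oy}$, yields $\cO_{\fX,\ox}^H\cong \cO_{\fY,\oy}$. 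Passing to fraction fields gives $\bK_X^H=\bK_Y$, so that $\bK_X/\bK_Y$ is Galois with group the image of $H$ in $\Aut(\bK_X)$; faithfulness of this action (which makes the group a genuine subgroup of $G$) follows from the normality of $\cO_{\fX,\ox}$ and the faithful $G$-action on the reduced $\cO(\fX)$. Separability is automatic from Galoisness.

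The step I expect to be the main technical obstacle is the precise translation, in (i), between the admissible open inside the rigid-analytic tube of $\oy$ and the algebraic Zariski open of $\Spec(\cO_{\fY,\oy}[1/\pi])$; this requires the compatibility between the rigid generic fiber of $\Spf(\cO_{\fY,\oy})$ and the adic algebraization of its formal étale neighborhood, which one extracts from \cite[2.5--2.12, 3.17--3.23]{VCS}.
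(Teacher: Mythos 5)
Your treatment of finiteness and of part (ii) is essentially the paper's: the paper also uses the decomposition $\cO_{\fY,\oy}\otimes_{\cO(\fY)}\cO(\fX)\cong\prod_{\ox'\in\fX(\oy)}\cO_{\fX,\ox'}$ (its \cite[2.12]{VCS}), and for (ii) it simply tensors this with $\bK_{\oy}$ and takes $G$-invariants of the resulting product of fields, which is slightly cleaner than your integral-level invariants followed by passage to fraction fields (and avoids your unneeded and shaky claim that faithfulness of the stabilizer action follows from normality). These parts are fine.

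The genuine gap is in (i), precisely at the step you yourself flag as the ``main technical obstacle.'' Your argument rests on the assertion that a nonempty admissible open $V$ of the tube of $\oy$ on which $g_{\eta}$ is étale ``corresponds to a nonempty Zariski open of $\Spec(\cO_{\fY,\oy}[1/\pi])$'' over which $\cO_{\fY,\oy}\to\cO_{\fX,\ox}$ becomes étale. That correspondence is false as stated: admissible affinoid subdomains of the tube (e.g.\ sub-discs or sub-annuli cut out by conditions of the form $\lvert h\lvert\leq\lvert\pi\lvert^{r}$) are not Zariski opens of the algebraic generic fiber of the formal étale local ring, and their rings of sections are obtained from $\cO_{\fY,\oy}[1/\pi]$ by completion, not localization; moreover the reference \cite[2.4.7, 2.4.8]{EGR} you invoke concerns lifting étaleness from the special fiber (it is used that way in Lemma \ref{Noether Normalization}) and does not provide such a dictionary. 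So the crucial transfer of étaleness from the rigid hypothesis to the generic point of $\Spec(\cO_{\fY,\oy}[1/\pi])$ is not actually established. The paper's proof supplies exactly the missing mechanism: it reduces to showing $\bK_{\oy}\otimes_{\cO(\fY)}\cO(U)\neq 0$ for an affinoid $U$ over which $g$ is étale, and proves this by realizing $U$ formally via an admissible blow-up $\fY'\to\fY$, taking a rig-point $z\in U$ (which specializes to $\oy$ since $U$ lies in the tube), lifting it to a geometric point $\oy'$ of the formal model above $\oy$, and using the resulting field $\bK_{\oy'}$ as a common receptacle for $\bK_{\oy}$ and $\cO(U)$; base-changing the étale cover $f^{-1}(U)\to U$ to this field and comparing with the product decomposition then gives separability of each $\bK_{\ox}/\bK_{\oy}$. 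Without an argument of this kind (or a genuine comparison of the fibers of the algebraic finite map over primes coming from rig-points of $U$), your proof of (i) is incomplete.
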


\begin{proof}
We can assume that $\fX=\Spf(B)$ and $\fY=\Spf(A)$ are affine. Denote by $\bK_{\oy}$ (resp. $\bK_{\ox}$) the field of fractions of $\cO_{\fY, \oy}$ (resp. $\cO_{\fX, \ox}$). By \cite[2.12]{VCS}, we have
\begin{equation}
	\label{Generic-Étale 1}
\cO_{\fY, \oy}\otimes_A B\cong \prod_{\ox\in \fX(\oy)}\cO_{\fX, \ox},
\end{equation}
where $\fX(\oy)$ is the set of geometric points of $\fX$ above $\oy$. This shows the finiteness of the homomorphism $\cO_{\fY, \oy}\to \cO_{\fX, \ox}$.

(i)\quad Let $U$ be an admissible open affinoid subset of $\fY_{\eta}$ such that $f: f^{-1}(U)\to U$ is étale.
It is enough to show that $\bK_{\oy}$ defines a point in $\Spec(\cO(U))$, i.e. that
$\bK_{\oy}\otimes_{\cO(\fY)} \cO(U) \neq 0$.
Indeed, if $\bK_{\oy}\otimes_A \cO(U) \neq 0$, then there exists a fields extension $\bL/\bK_{\oy}$ and a commutative diagram
\begin{equation}
	\label{Generic-Étale 2}
\xymatrix{
A \ar[r] \ar[d] & \cO(U)\ar[d]\\
\bK_{\oy} \ar[r] &  \bL.
}
\end{equation}
Using the canonical isomorphism $\bL\otimes_{\cO(U)}\cO(U)\otimes_A B\cong \bL\otimes_{\cO_{\fY, \oy}}\cO_{\fY, \oy}\otimes_A B$, we deduce from \eqref{Generic-Étale 1} that
\begin{equation}
	\label{Generic-Étale 3}
\bL\otimes_{\cO(U)}\cO(f^{-1}(U))=\bL\otimes_{\cO(U)}\cO(U)\otimes_A B \cong \prod_{\ox\in \fX(\oy)}\bL\otimes_{\cO_{\fY, \oy}}\cO_{\fX, \ox}=\prod_{\ox\in \fX(\oy)} \bL\otimes_{\bK_{\oy}}\bK_{\ox}.
\end{equation}
We deduce that the canonical homomorphism
\begin{equation}
	\label{Generic-Étale 4}
\bL\to \bL\otimes_{\bK_{\oy}}\bK_{\ox}
\end{equation}
is étale, and thus $\bK_{\oy}\to \bK_{\ox}$ is a separable extension.

Now the admissible open immersion $U\hookrightarrow \fY_{\eta}$ lifts to a formal morphism $\cU \hookrightarrow \fY'\to \fY$, where $\fY'\to \fY$ is an admissible formal blow-up and $\cU\hookrightarrow \fY'$ is a formal open immersion. The geometric point $\oy$ is a specialization of a rig-point $z\in U$. The latter defines a rig-point $z'$ of $\fY'$ by a property of admissible formal blow-ups \cite[8.3, Prop. 5]{Bosch}. Then, as $\cU_{\eta}=U$ is contained in the tube of $\oy$, $z'$ lies in $\cU$ and induces a geometric point $\oy'\to \cU$ above $\oy$. Let $\cV$ be a formal affine open subset of $\cU$ containing $y'$. Then, $\cO_{\fY', \oy'}=\cO_{\cV, \oy'}$ and we have a commutative square
\begin{equation}
	\label{Generic-Étale 5}
\xymatrix{
\cO(\fY) \ar[r] \ar[d] & \cO(U)\ar[r] & \cO(\cV)\otimes_{\cO_K} K \ar[d] \\
\bK_{\oy} \ar[rr] & & \bK_{\oy'}, 
}
\end{equation}
where $\bK_{\oy'}$ is the field of fractions of $\cO_{\cV, \oy'}$. It follows that $\bK_{\oy}\otimes_{\cO(\fY)} \cO(U) \neq 0$.

(ii) \quad We use the following isomorphism deduced from \eqref{Generic-Étale 1} by tensoring with $\bK_{\oy}$ 
\begin{equation}
	\label{Generic-Étale 6}
\bK_{\oy}\otimes_{\cO(\fY)}\cO(\fX)\xrightarrow{\sim} \prod_{\ox\in \fX(\oy)} \bK_{\ox}.
\end{equation}
If $\cO(\fX)^G\cong \cO(\fY)$, then $(\prod_{\ox\in \fX(\oy)} \bK_{\ox})^G \cong \bK_{\oy}$, and thus, for any $\ox\in \fX(\oy)$, $\bK_{\ox}/\bK_{\oy}$ is Galois of group a subgroup of $G$.
\end{proof}

\begin{defi}
	\label{S_f & d_f}
We keep the notation and assumptions of \ref{Annulus-II} and \ref{Models Formels}.  
Let $K'$ be a finite extension of $K$ which is admissible for $f$.
\begin{itemize}
\item[(1)] We say that the finite flat morphism $f$ is \textit{generically étale} if $f$ is étale over a nonempty open admissible subset of $C$. If $f$ is generically étale, then it has only a finite number of (closed) ramification points in $C$. Indeed, by the Jacobian criterion \cite[6.4.21]{EGR}, the closed subset of $X$ where $f$ is not étale is the vanishing locus of a nonzero convergent power series in one variable, which is finite by the Weierstrass preparation theorem.
In particular, $f$ is étale over an open admissible subset of $C^{\circ}$.
\item[(2)] We define $S_f$ (resp. $S'_f$) to be the set of couples $\tau=(\ox_{\tau}, \q_{\tau})$, where $\ox_{\tau}$ is a geometric point of $\fX_{K'}$ above $o$ and $\q_{\tau}$ is a height $1$ prime ideal of $B_{\tau}=\cO_{\fX_{K'}, \ox_{\tau}}$ above the height $1$ prime ideal $\p$ (resp. $\p'$) of $A=\cO_{\scrC, o}$. 
This is a nonempty finite set which is independent of the choice of the large enough extension $K'$ of $K$ which is admissible for $f$ (see \cite[4.22]{VCS}).
\item[(3)] We assume that $f$ is generically étale. Then, for each $\tau\in S_f$ (resp. $S'_f$) the induced homomorphism $A\to B_{\tau}$ is a morphism of $\widehat{\cC}_K$ (\ref{Generic-Étale} (i)); we denote by $V^h_{\tau}$ (resp. $V^{\prime h}_{\tau}$) the associated henselian $\bZ^2$-valuation ring (\ref{Property (P)-C_K}) and by $\bK_{\tau}^h$ (resp. $\bK_{\tau}^{\prime h}$) its field of fractions. 
With the notation at the end of \ref{Property (P)-C_K}, we define the integers
\begin{align}
d_{f, s}=\sum_{\tau\in S_f} v^{\beta}(c(B_{\tau}/A, \p, \q_{\tau})), \label{S_f & d_f 1} \\
d'_{f, s}=\sum_{\tau\in S'_f} v^{\prime\beta}(c(B_{\tau}/A, \p', \q_{\tau})). \label{S_f & d_f 2}
\end{align}
\end{itemize}
\end{defi}

\begin{prop}
	\label{Sum-Kato-Formula}
We resume the notation and assumptions of \ref{Annulus-II} and \ref{Models Formels}. We further assume that the finite flat morphism $f: X\to C$ is generically étale.
Let $K'$ be a finite extension of $K$ which is admissible for $f$. 
We denote by $\ox_1, \ldots, \ox_N$ the geometric points of $\fX_{K'}$ above $o$ and put $A=\cO_{\scrC, o}$, $B_j=\cO_{\fX_{K'}, \ox_j}$, for $j=1, \ldots, N$. 
Then, the homomorphisms $A\to B_j$ induced by $\hf_{K'}$ are morphisms in $\widehat{\cC}_{K'}$ and we have
\begin{equation}
	\label{Sum-Kato-Formula 1}
\sum_{j=1}^N (d_{\eta}(B_j/A) - 2\delta(B_j))= d_{f, s} + d'_{f, s} - 2\deg(f) .
\end{equation}
\end{prop}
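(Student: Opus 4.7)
The strategy is to reduce the global identity to a sum of local Kato--Hurwitz formulae (Proposition \ref{Kato-Formula}) applied to each of the finite morphisms $A\to B_j$. First I would verify these are indeed morphisms of $\widehat{\cC}_{K'}$: the couple $(\scrC_{K'}/\scrS', o)$ satisfies property (P) by the analysis in \ref{Annulus-I}--\ref{Annulus-II}, and each $(\fX_{K'}/\scrS', \ox_j)$ satisfies property (P) by \ref{Models Formels}. To invoke Lemma \ref{Generic-Étale}(i), note that the Jacobian-criterion argument of \ref{S_f & d_f}(1) shows that $f$, hence $\hf_{K',\eta}$, is étale over an admissible open subset of $C^{\circ}_{K'}$, which is precisely the generic fiber of the tube of $o$ in $\scrC_{K',\eta}$.

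Applying Proposition \ref{Kato-Formula} to each $A\to B_j$ gives
$$d_\eta(B_j/A) - 2\delta(B_j) \;=\; d_s(B_j/A) - 2\deg(B_j/A)\,\delta(A),$$
so that summing over $j$ yields
$$\sum_{j=1}^N \bigl(d_\eta(B_j/A) - 2\delta(B_j)\bigr) \;=\; \sum_{j=1}^N d_s(B_j/A) \;-\; 2\,\delta(A) \sum_{j=1}^N \deg(B_j/A).$$
It then remains to identify three quantities: $\delta(A)=1$, $\sum_j \deg(B_j/A) = \deg(f)$, and $\sum_j d_s(B_j/A) = d_{f,s} + d'_{f,s}$. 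The first follows from the description of the special fiber of $A$ as the formal étale local ring at the node $o$ of $\scrC_{K',s'}$: its normalization separates the two branches, adding a single $k$-dimension. The second uses the isomorphism \eqref{Generic-Étale 1}, which identifies $\prod_j B_j$ with $A\otimes_{\cO(\scrC_{K'})}\cO(\fX_{K'})$, a locally free $A$-module of rank $\deg(\hf_{K'})=\deg(f)$; since each $B_j$ is a normal local domain, $\deg(B_j/A)$ equals the $A$-rank of $B_j$.

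For the third identification, unpacking the definition $d_s(B_j/A)=\sum_{(\p,\q)} v_\p^\beta(c(B_j/A,\p,\q))$ and recalling that $P_s(A)=\{\p,\p'\}$ (see \ref{Annulus-II}), one observes that running over all $j$ and all height-one primes $\q$ of $B_j$ above $\p$ (resp.\ above $\p'$) is exactly running over $S_f$ (resp.\ $S'_f$); the double sum therefore splits exactly as $d_{f,s}+d'_{f,s}$. Combined with the previous identifications, this yields \eqref{Sum-Kato-Formula 1}. The only step with genuine content is the invocation of Kato--Hurwitz in Proposition \ref{Kato-Formula}; the rest is bookkeeping, and the main place where one must be careful is the verification that the tube of $o$ contains the locus of étaleness, which is what allows Lemma \ref{Generic-Étale}(i) to apply uniformly to every geometric point $\ox_j$.
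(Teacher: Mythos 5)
Your proof is correct and follows essentially the same route as the paper: verify that the $A\to B_j$ are morphisms of $\widehat{\cC}_{K'}$ via Lemma \ref{Generic-Étale}(i), apply the Kato--Hurwitz formula \ref{Kato-Formula} to each, and then identify $\delta(A)=1$, $\sum_j\deg(B_j/A)=\deg(f)$, and $\sum_j d_s(B_j/A)=d_{f,s}+d'_{f,s}$. The paper states these last identifications as ``readily seen from the definitions,'' while you usefully spell them out; the only imprecision is that $\prod_j B_j$ need not be locally free over the (possibly non-regular) ring $A$, but only its generic rank over $\mathrm{Frac}(A)$ matters and that is indeed $\deg(f)$.
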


\begin{proof}
By \ref{Annulus-II} and \ref{Models Formels}, the following couples
\begin{equation}
	\label{Sum-Kato-Formula 2}
(\scrC_{K'}/\scrS', o)\quad {\rm and}\quad ((\fX_{K'} -\{\ox_i, ~ i\neq j\})/\scrS', \ox_j)_{1\leq j\leq N}
\end{equation}
satisfy property (P) (\ref{Property (P)-C_K}).
Hence, $A$ and the $B_j$ are objects of the category $\widehat{\cC}_{K'}$ (\ref{Property (P)-C_K}).
As $f_{K'}$ is étale over an nonempty admissible open subset of the tube $C_{K'}^{\circ}$ of $o$ (\ref{S_f & d_f}(1)), we see from \ref{Generic-Étale} that the homomorphisms $A\to B_j$ induced by $\hf_{K'}$ are indeed morphisms in $\widehat{\cC}_{K'}$. 
As it is readily seen from the definitions that 
\begin{equation}
	\label{Sum-Kato-Formula 3}
\delta(A)=1 \quad {\rm and}\quad \sum_{j=1}^N d_s(B_j/A) = d_{f, s} + d'_{f, s},
\end{equation}
equation \eqref{Sum-Kato-Formula 1} follows from \ref{Kato-Formula}.
\end{proof}

\begin{prop}\label{Nearby Cycles Lutke}
We resume the notation and assumptions of \ref{Annulus-II} and \ref{Models Formels}. 
We further assume that $X$ \textit{has trivial canonical sheaf}, that $f$ is étale over $C^{[r]}$ and $C^{[r']}$ and that
\begin{equation}
	\label{Nearby Cycles Lutke 1}
f^{-1}(C^{[r]})=\coprod_{j=1}^{\delta_f} \Delta_j \quad {\rm and}\quad 
f^{-1}(C^{[r']})=\coprod_{j=1}^{\delta'_f} \Delta'_j,
\end{equation}
where $\Delta_j=A(r/d_j, r/d_j)$ and $\Delta'_j=A(r'/d'_j, r'/d'_j)$ with the integer $d_j\geq 1$ (resp. $d'_j\geq 1$) the order of $f$ on $\Delta_j$ $($resp. $\Delta'_j)$ \cite[4.2]{VCS}.
Let $K'$ be a finite extension of $K$ which is admissible for $f$. Denote by $\ox_1, \ldots, \ox_N$ the geometric points of $\fX_{K'}$ above $o$ \eqref{Models Formels} and put $A=\cO_{\scrC, o}$, $B_j=\cO_{\fX_{K'}, \ox_j}$, for $j=1, \ldots, N$. 
Then, we have a \emph{nearby cycles formula}
\begin{equation}
	\label{Nearby Cycles Lutke 2}
	\sum_{j=1}^N \left(d_{\eta}(B_j/A) - 2\delta(B_j)+ \lvert P_s(B_j)\lvert\right)= \sigma + \delta_f - (\sigma' + \delta'_f),
\end{equation}
where $\lvert P_s(B_j)\lvert$ denotes the cardinality of $P_s(B_j)$ and $\sigma$ $($resp. $\sigma')$ is the total order of the derivative of the restriction $f_{\lvert f^{-1}(C^{[r]})}$ $($resp. $f_{\lvert f^{-1}(C^{[r']})})$ of $f$ \cite[4.5]{VCS}.
\end{prop}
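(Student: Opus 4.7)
The plan is to combine Proposition \ref{Sum-Kato-Formula}, a global form of the Kato--Hurwitz formula \ref{Kato-Formula}, with an algebraic translation of Lütkebohmert's discriminant derivative formulas \cite[\S 1, 2]{Lutke} at the inner and outer boundaries of the annulus $C$.

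The first step is to apply Proposition \ref{Sum-Kato-Formula}, which replaces $\sum_j (d_{\eta}(B_j/A) - 2\delta(B_j))$ by $d_{f, s} + d'_{f, s} - 2\deg(f)$. Since $P_s(B_j)$ is the disjoint union of its subsets of primes lying above $\p$ and above $\p'$, we have $\sum_{j} |P_s(B_j)| = |S_f| + |S'_f|$. Hence \eqref{Nearby Cycles Lutke 2} is equivalent to
$$\bigl(d_{f,s} + |S_f|\bigr) + \bigl(d'_{f,s} + |S'_f|\bigr) - 2\deg(f) = (\sigma + \delta_f) - (\sigma' + \delta'_f).$$
The natural approach is to show that this identity splits into separate inner and outer equalities, one per irreducible component of $\scrC_{s'}$. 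The sign asymmetry $+(\sigma + \delta_f)$ versus $-(\sigma' + \delta'_f)$ reflects the opposite directions in which $v^{\beta}$ and $v^{\prime\beta}$ point: into the bulk of $\fX_{K'}$ from the inner and outer components of $\scrC_{s'}$ respectively.

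To establish the inner identity, I would work locally near the inner component of $\scrC_{s'}$, parametrizing a family of sub-annuli $A(t, r) \subset C$ with $t$ slightly larger than $r$. Using the decomposition \eqref{Nearby Cycles Lutke 1}, the hypothesis that $f$ is étale on $C^{[r]}$, and the triviality of the canonical sheaf of $X$, Lütkebohmert's explicit formula for the right derivative of $\partial_f$ at $r$ expresses it as $\sigma + \delta_f$ plus a global correction (essentially $-\deg(f)$) in terms of the inner boundary strata of $\fX_{K', s}$. On the other hand, each local trace generator $c(B_{\tau}/A, \p, \q_{\tau})$ for $\tau \in S_f$ has $v^{\beta}$-valuation computing the length of a certain cokernel of the trace pairing, and summing these contributions together with the branch count $|S_f|$ recovers the same derivative algebraically. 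Matching the two expressions yields the inner identity $d_{f,s} + |S_f| - \deg(f) = \sigma + \delta_f$. The outer identity is obtained by the symmetric argument at $r'$, with the sign reversed because the relevant derivative is the \emph{left} derivative of $\partial_f$ at $r'$.

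The main obstacle is the precise algebraic translation of Lütkebohmert's boundary derivatives into the two-dimensional valuation-theoretic invariants $v^{\beta}(c(B_{\tau}/A, \p, \q_{\tau}))$, and the compatibility of this translation with the orientation conventions on $V^{h}_{\tau}$ and $V^{\prime h}_{\tau}$: the singular point $o$ of $\scrC_{s'}$ has two natural local parameters ($\xi$ and $\zeta$ with $\xi\zeta = \pi^{m-n}$), and the asymmetric sign structure of \eqref{Nearby Cycles Lutke 2} fundamentally records the choice of one of these parameters to orient the annulus from inner to outer radii. A subsidiary, more technical, point is that, while the decomposition \eqref{Nearby Cycles Lutke 1} is stable under further base change \ref{Models Formels}, after passing to a larger admissible $K'$ one may assume the multiplicities $d_j$ and $d'_j$ are all equal to $1$, which simplifies the enumeration of the branch sets $S_f$ and $S'_f$ at the $\ox_j$'s in terms of the geometry of $\fX_{K', s}$.
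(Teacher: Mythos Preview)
Your reduction via Proposition~\ref{Sum-Kato-Formula} is correct, and so is the reformulation as
\[
\bigl(d_{f,s} + |S_f| - \deg(f)\bigr) + \bigl(d'_{f,s} + |S'_f| - \deg(f)\bigr) = (\sigma + \delta_f) - (\sigma' + \delta'_f).
\]
But the heart of your proposal---splitting this into an inner identity $d_{f,s} + |S_f| - \deg(f) = \frac{d}{dt}\partial_f(r+)$ and an outer one, each to be obtained by ``matching'' L\"utkebohmert's derivative formula with the $v^{\beta}$-valuation of the trace generators---is asserted rather than argued. You say ``summing these contributions together with the branch count $|S_f|$ recovers the same derivative algebraically,'' but you give no mechanism linking $v^{\beta}(c(B_{\tau}/A,\p,\q_{\tau}))$ to the order of $f'$ on the corresponding boundary component $\Delta_j$. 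This link is precisely the nontrivial content of the proposition; in the disc case \cite[4.28, 7.12]{VCS} the analogous identity is itself proved by a nearby-cycles argument, not by direct valuation bookkeeping. Your sketch also never uses the trivial-canonical-sheaf hypothesis in any visible way.

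The paper's proof proceeds along an entirely different, global route. One glues formal discs onto both boundary decompositions \eqref{Nearby Cycles Lutke 1} to obtain a proper formal curve $\fY_{K'}$, algebraizes it to $Y'/S'$, and then, using the trivial canonical sheaf to write $df=f^{\dagger}\omega$, approximates $f$ via Runge by a rational function $g:Y'\to\bP^1_{S'}$. Computing $\deg(\mathrm{div}(dg_{\eta'}))=2g(Y'_{\overline{\eta}'})-2|\pi_0(Y'_{\overline{\eta}'})|$ tube by tube produces the terms $\sum_j d_{\eta}(B_j/A)$ and $\sigma,\sigma',\delta_f,\delta'_f$; independently, the Euler characteristic $\chi(Y'_{\overline{\eta}'})$ is computed via $R\Psi_{Y'/S'}(\Lambda)$, and Kato's formula \cite[5.9]{K1} converts $\dim R^1\Psi(\Lambda)_{\ox_j}$ into $2\delta(B_j)-|P_s(B_j)|+1$. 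Equating the two expressions for the genus gives \eqref{Nearby Cycles Lutke 2}. Nothing in this argument separates into inner and outer halves: the two boundaries are coupled through the global curve $Y'$.

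A smaller point: your remark that after enlarging $K'$ one may assume all $d_j,d'_j$ equal $1$ is incorrect. The integers $d_j$ are the local degrees of $f$ on the components $\Delta_j$; they are intrinsic to $f$ and do not change under base field extension.
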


\begin{proof}
We put $X^{[r]}=f^{-1}(C^{[r]})$, $X^{[r']}=f^{-1}(C^{[r']})$ and denote by $\scrC^{[r]}$, $\scrC^{[r']}$, $\fX^{[r']}$, $\fX^{[r']}$, $\fX$ and $\hf: \fX\to \scrC$ the respective normalized integral models of $C^{[r]}$, $C^{[r']}$, $X^{[r]}$, $X^{[r']}$, $X$ and $f:X\to C$ defined over $\cO_{K'}$ (\ref{Models Formels}).
We have a Cartesian diagram
\begin{equation}
	\label{Nearby Cycles Lutke 3}
\xymatrix{\ar @{} [dr] | {\Box} 
X^{[r']}\ar[r]\ar[d] & \ar @{} [dr] | {\Box}  X\ar[d]^f & X^{[r]} \ar[l]\ar[d] \\
C^{[r']}\ar[r] & C & C^{[r]}\ar[l],
}
\end{equation}
where the horizontal arrows are inclusions, and the following decompositions into disjoint unions
\begin{equation}
\label{Nearby Cycles Lutke 4}
\fX^{[r]}=\coprod_j \widehat{\Delta}_j \quad {\rm and}\quad \fX^{[r']}=\coprod_j \widehat{\Delta}'_j,
\end{equation} 
where the normalized integral model $\widehat{\Delta}_j=\Spf(\cO^{\circ}(\Delta_j))$ of $\Delta_j$ is the formal annulus of radius $\lvert\pi\lvert^{r/d_j}$ with $0$-thickness, defined over $K'$, and is isomorphic to $\Spf(\cO_{K'}\{T_j, T_j^{-1}\})$; and $\Delta'_j$ is similarly defined and isomorphic to $\Spf(\cO_{K'}\{T'_j, T_j^{\prime -1}\})$. As $\fX\times_{\scrC}\scrC^{[r]}$ (resp. $\fX\times_{\scrC}\scrC^{[r]}$) is an affine formal model of $X_{K'}^{[r]}$ (resp. $X_{K'}^{[r']}$) \eqref{Nearby Cycles Lutke 3}, with a geometrically reduced special fiber, by \cite[4.1(ii)]{VCS}, we have have a Cartesian diagram
\begin{equation}
	\label{Nearby Cycles Lutke 5}
\xymatrix{\ar @{} [dr] | {\Box} 
\fX^{[r']}\ar[r]\ar[d] & \ar @{} [dr] | {\Box}  \fX \ar[d]^{\widehat{f}} & \fX^{[r]} \ar[d] \ar[l]\\
\scrC^{[r']}\ar[r] & \scrC & \scrC^{[r]} \ar[l].
}
\end{equation}
The horizontal arrows above are formal open immersions. It follows that $\fX_{s'} - (\fX_{s'}^{[r]}\sqcup \fX_{s'}^{[r']})$ lies over the singular point $\scrC_{s'}-(\scrC_{s'}^{[r]}\sqcup \scrC_{s'}^{[r']})=\{o\}$.\newline

For each $1\leq j\leq \delta_f$,  we glue $\fX_{K'}$ and a formal closed disc $\fD_j=\Spf(\cO_{K'}\{S_j\})$ along the boundary $\scrC_j^{[r]}=\Spf(\cO_{K'}\{S_j, S_j^{-1}\})$ with gluing map $T_j\mapsto S_j^{-1}$.
For each $1\leq j\leq \delta'_f$, we also glue $\fX_{K'}$ and a formal closed disc $\fD'_j=\Spf(\cO_{K'}\{S'_j\})$ along the boundary $\scrC_j^{[r']}=\Spf(\cO_{K'}\{S'_j, S_j^{\prime -1}\})$ with gluing map $T'_j\mapsto S'_j$.
The resulting formal relative curve 
\begin{equation}
	\label{Nearby Cycles Lutke 6}
\fY_{K'}=(\coprod_{j=1}^{\delta'_f} \fD'_j)\cup \fX_{K'}\cup (\coprod_{j=1}^{\delta_f}\fD_j)\big/\sim ~\to \scrS'=\Spf(\cO_{K'}).
\end{equation}
has smooth rigid fiber and contains $\fX_{K'}$ as a formal open subscheme.
As $\fX_{K'}$ is normal, $\fY_{K'}$ is also normal.
Its special fiber $\fY_{s'}$ is the gluing of $\fX_{s'}$ and $\delta_f + \delta'_f$ copies of $\bA_k^1$; for each $1\leq j\leq \delta_f$ (resp. $1\leq j\leq \delta'_f$), the copy $\Spec(k[S_j])$ (resp. $\Spec(k[S'_j])$) is glued with $\Spec(k[T_j, T_j^{-1}]$ (resp. $\Spec(k[T'_j, T_j^{\prime -1}])$ by the identification $T_j\mapsto S_j^{-1}$ (resp. $T'_j\mapsto S'_j$). It follows that $\fY_{s'}$ is a projective $k$-curve.
Moreover, by construction, the singular locus of $\fY_{s'}$ is contained in the set $\fX_{s'}-(\fX_{s'}^{[r]}\sqcup \fX_{s'}^{[r']})$.
By Grothendieck's algebraization theorem, there exists a relative proper algebraic curve $Y'$ over $S'=\Spec(\cO_{K'})$ whose formal completion along its special fiber is $\fY_{K'}$ \cite[5.4.5]{EGA.III}.
As the rigid fiber $\fY_{\eta'}$ of $\fY_{K'}$ is smooth, so is the generic fiber $Y'_{\eta'}$ of $Y'$.
Since the canonical sheaf of $X$ is trivial, there exists a global section $\omega \in \Gamma(X_{K'}, \Omega^1_{X_{K'}/K'})$ inducing an isomorphism $\cO_{X_{K'}}\xrightarrow{\sim} \Omega^1_{X_{K'}/K'}$.
Thus we can write $df=f^{\dagger}\omega$, where $f^{\dagger}\in \Gamma(X_{K'}, \cO_X)$.
For each $j$, both $\omega\lvert \Delta_j$ and $dT_j$ (resp. $\omega\lvert \Delta'_j$ and $dT'_j$) induce a basis of $\Omega^1_{X_{K'}/K'}$ on $\Delta_j$ (resp. $\Delta'_j$); thus, we have $\omega\lvert \Delta_j=u_j(T_j) dT_j$ and $\omega\lvert \Delta'_j=u_j(T'_j) dT'_j$, for some $u_j(T_j)\in \Gamma(\Delta_j, \cO_{\Delta_j})^{\times}$ and $u_j(T'_j)\in \Gamma(\Delta'_j, \cO_{\Delta'_j})^{\times}$.
Hence, we deduce that $f'(T_j)=u_j(T_j) f^{\dagger}\lvert \Delta_j$ and $f'(T'_j)=u_j(T'_j) f^{\dagger}\lvert \Delta'_j$.
We choose a point $y_j$ (resp. $y'_j$) in the generic fiber $D_j$ (resp. $D'_j$) of $\fD_j$ (resp. $\fD'_j$) that is not in $\Delta_j$ (resp. $\Delta'_j$).
By the rigid Runge theorem \cite[3.5.2]{Raynaud-Abh}, we can then approximate $\hf: \fX_{K'}\to \scrC_{K'}$ by the formal completion $\hg:\fY_{K'}\to \widehat{\mathbb{P}}_{S'}^1$ of an algebraic morphism $g: Y'\to \mathbb{P}_{S'}^1$ satisfying $g^{-1}(\infty)\subset \{y_j, y'_j\}$, such that the induced morphism $g_{\eta'}: \fY_{\eta'}\to \mathbb{P}_{K'}^{1, {\rm rig}}$ on rigid fibers has poles at most at the $y_j$ and $y'_j$ and, on each $\Delta_j$ (resp. $\Delta'_j$), we have
\begin{equation}
	\label{Nearby Cycles Lutke 7}
\lvert g_{\eta'} - f \lvert_j ~< \lvert f^{\dagger}\lvert_j/\lvert u_j^{-1} (T_j)\lvert_{\sup} \quad ({\rm resp.}\quad  \lvert g_{\eta'} - f\lvert'_j ~< \lvert f^{\dagger}\lvert'_j/\lvert u_j^{-1} (T'_j)\lvert_{\sup}),
\end{equation}
where $\lvert \cdot \lvert_j$ (resp. $\lvert \cdot \lvert'_j$) is defined as the sup-norm of the restriction to $\Delta_j$ (resp. $\Delta'_j$).
As for $f$, we have $dg_{\eta'}\lvert X_{K'}=g^{\dagger}\omega$, for some $g^{\dagger}\in \Gamma(X_{K'}, \cO_X)$, $g'_{\eta'}(T_j)=u_j(T_j) g^{\dagger}\lvert \Delta_j$ and $g'_{\eta'}(T'_j)=u_j(T'_j) g^{\dagger}\lvert \Delta'_j$.
Since $Y'_{\eta'}$ is a smooth projective curve, and $dg$ is a meromorphic section of the canonical sheaf $\Omega^1_{Y'_{\eta'}/K'}$ which is nonzero (by the equality $\lvert g'_{\eta'}(T_j) \lvert_{\sup} =\lvert f'(T_j)\lvert_{\sup}$ on $\Delta_j$ established just below \eqref{Nearby Cycles Lutke 10}), we have
\begin{equation}
	\label{Nearby Cycles Lutke 8}
	2g(Y'_{\overline{\eta}'})-2\lvert \pi_0(Y'_{\overline{\eta}'})\lvert=\deg({\rm div}(dg_{\eta'})),
\end{equation}
where $g(Y'_{\overline{\eta}'})$ is the total genus of $Y'_{\overline{\eta}'}$, i.e. the sum of the genera of its connected components. Let us compute the right-hand side of \eqref{Nearby Cycles Lutke 8}.
On $\Delta_j$, taking the derivative of a power series expansion of $g_{\eta'}-f$ and using the strong triangle inequality gives
\begin{equation}
	\label{Nearby Cycles Lutke 9}
\lvert g'_{\eta'}(T_j) - f'(T_j) \lvert_{\sup}\leq \lvert g_{\eta'} - f\lvert_j.
\end{equation}
Since $\lvert g^{\dagger} - f^{\dagger}\lvert_j\leq \lvert u_j^{-1}(T_j)\lvert_{\sup}\lvert (g_{\eta'} - f)'(T_j) \lvert_{\sup}$ and $\lvert f^{\dagger}\lvert_j\leq \lvert u_j^{-1}(T_j)\lvert_{\sup}\lvert f'(T_j) \lvert_{\sup}$, equations \eqref{Nearby Cycles Lutke 7} and \eqref{Nearby Cycles Lutke 9} yield both following inequalities
\begin{equation}
	\label{Nearby Cycles Lutke 10}
\lvert g'_{\eta'}(T_j) - f'(T_j) \lvert_{\sup} < \lvert f'(T_j)\lvert_{\sup}\quad {\rm and}\quad \lvert g^{\dagger} - f^{\dagger}\lvert_j < \lvert f^{\dagger}\lvert_j.
\end{equation}
Therefore, we also have $\lvert g'_{\eta'}(T_j) \lvert_{\sup} =\lvert f'(T_j)\lvert_{\sup}$ and $\lvert g^{\dagger} \lvert_j =\lvert f^{\dagger}\lvert_j$.
On $\Delta'_j$, the same argument gives $\lvert g'_{\eta'}(T'_j) \lvert_{\sup} =\lvert f'(T'_j)\lvert_{\sup}$ and $\lvert g^{\dagger} \lvert'_j =\lvert f^{\dagger}\lvert_j'$.
Hence, at each point of the normalization $\widetilde{\fY}_{s'}$ of $\fY_{s'}$, $f^{\dagger}$ and $g^{\dagger}$ have the same order as defined in \cite[2.20]{VCS}, and so do $f'(T_j)$ and $g'_{\eta'}(T_j)$ (resp. $f'(T'_j)$ and $g'_{\eta'}(T'_j)$).  Denoting by $C_+(y)$ the fiber of a point $y\in \fY_{s'}$ under the specialization map $\fY_{\eta'}\to \fY_{s'}$, it follows from \cite[2.21]{VCS}, that, for each $x_j\in \fX_{s'}-(\fX_{s'}^{[r]}\sqcup \fX_{s'}^{[r']})$, we have $\deg({\rm div}(g^{\dagger})\lvert C_+(x_j))=\deg({\rm div}(f^{\dagger})\lvert C_+(x_j))$. Hence, because we have
\begin{equation}
	\label{Nearby Cycles Lutke 11}
{\rm div}(dg_{\eta'})\lvert C_+(x_j)={\rm div}(g^{\dagger})\lvert C_+(x_j) + {\rm div}(\omega)\lvert C_+(x_j),
\end{equation}
and similarly for $df$ and $f^{\dagger}$, we obtain
$\deg({\rm div}(dg_{\eta'})\lvert C_+(x_j))=\deg({\rm div}(df\lvert C_+(x_j))$.
Moreover, as $f$ is étale on $X_{K'}^{[r]}$, so is $g_{\eta'}$; hence, ${\rm div}(dg_{\eta'}\lvert X_{K'})$ is supported in the tube of $\fX_{s'}-(\fX_{s'}^{[r]}\sqcup \fX_{s'}^{[r']})$. Therefore, we have
\begin{equation}
	\label{Nearby Cycles Lutke 12}
\deg({\rm div}(dg_{\eta'}\lvert X_{K'}))=\sum_{j=1}^N \deg({\rm div}(df)\lvert C_+(x_j))=\sum_{j=1}^N d_{\eta}(B_j/A).
\end{equation}
We denote by $\Delta_j^{-}$ the annulus $\Delta_j$  seen as the boundary of the disc $D_j$, with coordinate $S_j=T_j^{-1}$. Since $g_{\eta'}$ is étale over $\Delta_j^{-}$ (resp. $\Delta'_j$), ${\rm div}(dg_{\eta'})$ (resp. ${\rm div}(dg_{\eta'})$) is supported on $C_+(y_j)$ (resp. $C_+(y'_j)$). As $D_j-\Delta_j^{-}=C_+(y_j)$ (resp. $D'_j-\Delta'_j=C_+(y'_j)$), and $g'(T_j)$ and $f'(T_j)$ (resp. $g'(T'_j)$ and $f'(T'_j)$) have the same order $\sigma_j$ (resp. $\sigma'_j$) on the annulus $\Delta_j$ (resp. $\Delta'_j$), \cite[2.21]{VCS} again yields
\begin{equation}
	\label{Nearby Cycles Lutke 13}
\deg({\rm div}(dg_{\eta'}\lvert D_j-\Delta_j^{-}))={\rm ord}_{y_j}(g'_{\eta'}(T_j^{-1}))=-2-\sigma_j.
\end{equation}
\begin{equation}
	\label{Nearby Cycles Lutke 14}
\deg({\rm div}(dg_{\eta'}\lvert D'_j-\Delta'_j))={\rm ord}_{y'_j}(g'_{\eta'}(T'_j))=\sigma'_j.
\end{equation}
Summing \eqref{Nearby Cycles Lutke 13} over the $j$'s and adding \eqref{Nearby Cycles Lutke 12}, we find at last that the total degree is
\begin{equation}
	\label{Nearby Cycles Lutke 15}
\deg({\rm div}(dg_{\eta'}))=\sum_{j=1}^N d_{\eta}(B_j/A) + \sigma' - \sigma -2\delta_f.
\end{equation}
Now, let $R\Psi$ be the nearby cycles functor associated to the proper structure morphism $Y'\to S'$ and let $\Lambda$ be a finite field of characteristic different from $p$. Denoting by $Z$ the closed subset $\fX_{s'}-(\fX_{s'}^{[r]}\sqcup \fX_{s'}^{[r']})$ of the special fiber $\fY_{s'}\cong Y'_{s'}$, $i : Z\to Y'_{s'}$ the closed immersion and $j: U=Y'_{s'} - Z \to Y'_{s'}$ the inclusion of the complement, the long exact sequence of cohomology induced by the short exact sequence $0\to j_{!}(\Lambda_{\lvert U})\to \Lambda\to i_{*}(\Lambda_{\lvert Z})\to 0$ of sheaves on  $Y'_{s'}$ gives the following equality of Euler-Poincaré characteristics
\begin{equation}
	\label{Nearby Cycles Lutke 16}
\chi(Y'_{s'},\Lambda)=\chi_c (U, \Lambda_{\lvert U}) + \chi(Y'_{s'}, i_{*}(\Lambda_{\lvert Z})),
\end{equation}
where $\chi_c (\cdot)$ is the Euler-Poincaré characteristic with compact support. As the residue fields of the points in $Z$ coincide with the algebraically closed field $k$, we get $\chi(Y'_{s'}, i_{*}(\Lambda_{\lvert Z}))=\dim_{\Lambda} H_{\textrm{ét}}^0 (Z, \Lambda_{\lvert Z})=\lvert Z\lvert= N$. As $U$ is a disjoint union of $\delta_f+\delta'_f$ copies of $\bA_k^1$, we see that
\begin{equation}
	\label{Nearby Cycles Lutke 17}
\chi_c (U, \Lambda_{\lvert U}) =(\delta_f+\delta'_f) \chi_c (\bA_k^1,\Lambda)=\delta_f+\delta'_f.
\end{equation}
Since $Y'$ is normal, the strict localization$Y'_{(\ox)}$ at any geometric point $\ox \to Y'_{s'}$ is also normal; hence, $Y'_{(\ox)}\times\eta'$ is reduced. Moreover, as $Y'_{s'}\cong\fY_{s'}$ is reduced, so is $Y'_{(\ox)}\times s'=(Y'_{s'})_{(\ox)}$. Therefore, applying \cite[18.9.8]{EGA.IV} to the flat local homomorphism $Y'_{(\ox)}\to S'$, we see that the Milnor tube $Y'_{(\ox)}\times \overline{\eta}'$ is connected. As $R^i\Psi(\Lambda_{\lvert Y'_{\overline{\eta}'}})_{\ox}=H_{\textrm{ét}}^i(Y'_{(\ox)}\times \overline{\eta}', \Lambda)$ \cite[XIII, 2.1.4]{SGA7}, the sheaf $R^0\Psi(\Lambda_{\lvert Y'_{\overline{\eta}'}})$ is thus isomorphic to $\Lambda_{\lvert Y'_{s'}}$ and $R^i\Psi(\Lambda_{\lvert Y'_{\overline{\eta}'}})=0$ for $i >1$ \cite[I, Théorème 4.2]{SGA7}. Moreover, $R^1\Psi(\Lambda_{\lvert Y'_{\overline{\eta}'}})$ is concentrated in the singular locus of $Y'_{s'}$ \cite[XIII, 2.1.5]{SGA7}, located in $Z$. It thus follows from \eqref{Nearby Cycles Lutke 16} and \eqref{Nearby Cycles Lutke 17} that
\begin{equation}
	\label{Nearby Cycles Lutke 18}
N + \delta_f+\delta'_f -\sum_{j=1}^N \dim_{\Lambda} H_{\textrm{ét}}^1(Y'_{(\ox_j)}\times \overline{\eta}', \Lambda)=\chi(Y'_{s'}, R\Psi(\Lambda)).
\end{equation}
By the proper base change theorem, we also have the equality
\begin{equation}
	\label{Nearby Cycles Lutke 19}
\chi(Y'_{s'}, R\Psi (\Lambda))=\chi(Y'_{\overline{\eta}'}, \Lambda)=2\lvert \pi_0(Y'_{\overline{\eta}'})\lvert-2g(Y'_{\overline{\eta}'}).
\end{equation}
It remains to link the cohomology group in \eqref{Nearby Cycles Lutke 18} to $\delta(B_j)$ and $P_s(B_j)$ in the following way. As $\fX_{K'}$ is a formal open subscheme of $\fY_{K'}$, we have $\cO_{\fX_{K'}, \ox_j}=\cO_{\fY_{K'}, \ox_j}$. Then, \cite[2.8]{VCS} gives that 
\begin{equation}
	\label{Nearby Cycles Lutke 20}
B_j/\m_{K'} \xrightarrow{\sim} \cO_{\fY_{s'}, \ox_j}=\cO_{Y'_{s'}, \ox_j}.
\end{equation}
Since, for $A\in {\rm Obj}(\cC_{K'})$ (resp. ${\rm Obj}(\widehat{\cC}_{K'}))$, $P_s(A)$ identifies with the set of minimal prime ideals of $A/\m_{K'}$, it then follows that
\begin{equation}
	\label{Nearby Cycles Lutke 21}
\delta (B_j)=\delta(\cO_{Y', \ox_j}) \quad {\rm and}\quad \lvert P_s(B_j)\lvert=\lvert P_s(\cO_{Y', \ox_j})\lvert.
\end{equation}
As the couple $((Y'-\{\ox_i, i\neq j\})/S', \ox_j)$ satisfy property (P) in \ref{Property (P)-C_K}; then, \cite[Prop. 5.9]{K1}, in conjunction with \eqref{Nearby Cycles Lutke 21}, implies that
\begin{equation}
	\label{Nearby Cycles Lutke 22}
2\delta(B_j) - \lvert P_s(B_j)\lvert + 1= \dim_{\Lambda} H_{\textrm{ét}}^1(Y'_{(\ox_j)}\times \overline{\eta}', \Lambda).
\end{equation}
Finally, combining \eqref{Nearby Cycles Lutke 8}, \eqref{Nearby Cycles Lutke 15}, \eqref{Nearby Cycles Lutke 18}, \eqref{Nearby Cycles Lutke 19} and \eqref{Nearby Cycles Lutke 22} yields \eqref{Nearby Cycles Lutke 2}, which concludes the proof.
\end{proof}

\subsection{} \label{Decomposition SemiStable}
We resume the notation and assumptions of \ref{Annulus-II} and \ref{Models Formels}. We furthermore assume that the finite flat morphism $f:X\to A(r', r)$ is generically étale. We know from \cite[2.3]{Lutke} (see also \cite[4.4]{VCS}), via the semi-stable reduction theorem, that there exist a finite extension $K'$ of $K$ and a sequence of rational numbers $r'=r_0> r_1>\cdots > r_n > r_{n+1}=r$ in $v_K(K')$ such that $f_{K'}^{-1}(A^{\circ}_{K'}(r_{i-1}, r_i))$ is a finite disjoint union of open annuli $A_{K'}^{\circ}(r_{i-1}/d_{ij}, r_i/d_{ij})$ and the restrictions of $f$ to the latter annuli are étale of the form 
\begin{equation}
A_{K'}^{\circ}(r_{i-1}/d_{ij}, r_i/d_{ij})\to A^{\circ}_{K'}(r_{i-1}, r_i), \quad \xi_{ij}\mapsto \xi_{ij}^{d_{ij}}(1+h_{ij}(\xi_{ij})),
\end{equation} 
for some integers $d_{ij}\geq 1$, the order of $f$ on the annulus $A_{K'}^{\circ}(r_{i-1}/d_{ij}, r_i/d_{ij})$, and functions $h_{ij}$ on the same annulus satisfying $\lvert h_{ij}\lvert_{\rm sup} < 1$. 
A radius, i.e. an element of $[r, r']\cap \bQ$, which is different from all the $r_i$'s is said to be \emph{non-critical}; being non-critical is stable under base change.
It follows that the assumption \eqref{Nearby Cycles Lutke 1} is satisfied if we restrict $f$ over a sub-annulus $A(t', t)\subset C$, where $t'\geq t$ are non-critical radii of $f$.

\begin{lem}
	\label{AnnulationDe-d-Eta}
We use the notation of \ref{Sum-Kato-Formula}.
If the morphism $f: X\to C$ is étale, then, for every $j=1, \ldots, N$, we have $d_{\eta}(B_j/A)=0$. 
\end{lem}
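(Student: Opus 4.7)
The plan is to reduce the vanishing of $d_{\eta}(B_j/A)$ to the classical fact that an étale finite extension of Dedekind domains has non-degenerate trace form, and then pass étaleness from the rigid morphism $f$ down to the generic fibers of the formal étale stalks.

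First I would recall the definition: $d_{\eta}(B_j/A)$ is the $K'$-dimension of the cokernel of the determinant trace map
\[
T_{B_{j,K'}/A_{K'}} : \det_{A_{K'}}(B_{j, K'}) \otimes_{A_{K'}} \det_{A_{K'}}(B_{j, K'}) \to A_{K'}
\]
associated to the finite extension of Dedekind domains $A_{K'} \to B_{j, K'}$. If I can show this extension is étale, then the trace pairing $B_{j, K'} \otimes_{A_{K'}} B_{j, K'} \to A_{K'}$ is perfect, so $T_{B_{j, K'}/A_{K'}}$ is a morphism of invertible $A_{K'}$-modules whose image generates the unit ideal, hence is an isomorphism, giving $d_\eta(B_j/A) = 0$.

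The only real work is then to prove that $A_{K'} \to B_{j, K'}$ is étale as a ring homomorphism. For this I would tensor the isomorphism \eqref{Generic-Étale 1} with $K'$, yielding
\[
A_{K'} \otimes_{\cO(\scrC_{K'})} \cO(\fX_{K'}) \cong \prod_{i=1}^{N} B_{i, K'},
\]
where the left-hand side is a base change along $\cO(\scrC_{K'}) \to \cO(C_{K'}) = \cO(\scrC_{K'})[1/\pi']$ followed by a base change to $A_{K'}$. Since $f: X \to C$ is étale, the homomorphism $\cO(C_{K'}) \to \cO(X_{K'}) = \cO(\fX_{K'})[1/\pi']$ is étale, and étaleness is stable by base change; therefore the product on the right is étale over $A_{K'}$, and projecting to the $j$-th factor yields the required étaleness of $A_{K'} \to B_{j, K'}$.

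The main (and essentially the only) subtlety is the verification just described, namely that the generic-fiber homomorphism $A_{K'} \to B_{j, K'}$ obtained from the formal étale local rings is étale. Once this is pinned down, the conclusion follows immediately from the standard characterization of étale algebras via perfectness of the trace form.
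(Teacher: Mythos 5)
Your proposal is correct and follows essentially the same route as the paper: one invokes the base-change isomorphism $A_{K'}\otimes_{\cO(C_{K'})}\cO(X_{K'})\cong\prod_j B_{j,K'}$ (the $K'$-tensored form of \eqref{Generic-Étale 1}, i.e. \cite[2.12]{VCS}) to transfer étaleness of $\cO(C_{K'})\to\cO(X_{K'})$ to each $A_{K'}\to B_{j,K'}$, and then concludes $d_{\eta}(B_j/A)=0$ from non-degeneracy of the trace form. The only point where the paper is more careful is the step you flag as the "only real work": it justifies that the rigid étaleness of $f$ yields étaleness of the ring map $\cO(C_{K'})\to\cO(X_{K'})$ by passing through the formal model ($\cO(\fX_{K'})$ is rig-étale over $\cO(\scrC_{K'})$ and hence étale after inverting $\pi'$, by \cite[6.4.12, 1.14.15]{EGR}), whereas you assert this translation directly.
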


\begin{proof}
As $\cO(C_{K'})$ is reduced and the $K'$-affinoid algebra $\cO(X_{K'})$ is finite over $\cO(C_{K'})$, $\cO(\fX)$ is also a finite algebra over $\cO(\scrC)$ \cite[6.4.1/6]{BGR}. Moreover, since $f$ is étale, $\cO(\fX)$ is rig-étale over $\cO(\scrC)$ \cite[6.4.12]{EGR}. It follows from  \cite[1.14.15]{EGR} that $\cO(\fX)[1/\pi']=\cO(X_{K'})$ is étale over $\cO(\scrC)[1/\pi']=\cO(C_{K'})$, where $\pi'$ is a unformizer of $\cO_{K'}$. By \cite[2.12]{VCS}, we also have
\begin{equation}
	\label{AnnulationDe-d-Eta 1}
A_{K'}\otimes_{\cO(C_{K'})}\cO(X_{K'})\cong K'\otimes_{\cO_{K'}}(A\otimes_{\cO(\scrC)}\cO(\fX))\cong K'\otimes_{\cO_{K'}}(\prod_{j=1}^N B_j)=\prod_{j=1}^N B_{j, K'}.
\end{equation}
Whence we deduce that, for each $j=1, \ldots, N$, the extension of Dedekind domains $A_{K'}\to B_{j, K'}$ is étale and thus $d_{\eta}(B_j/A)=0$.
\end{proof}

\begin{lem}
	\label{Singular-Points}
The fiber $\hf_{s'}^{-1}(o)$ of the morphism $\hf_{s'}: \fX_{s'}\to \scrC_{s'}$ induced by the normalized integral model of $f$ over $\cO_{K'}$ lies in the non-smooth locus of $\fX_{s'}$.
\end{lem}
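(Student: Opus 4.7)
The approach is to pick a closed point $x$ of the fiber $\hf_{s'}^{-1}(o)$ and, under the assumption that $\fX_{s'}$ is smooth at $x$, derive a contradiction by exhibiting zero divisors in $\cO_{\fX_{s'},x}$ coming from the node of $\scrC_{s'}$ at $o$.

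First, I would record that $\hf_{K'}:\fX_{K'}\to\scrC_{K'}$ is finite flat. This is implicit in the preceding material—it is used, for instance, through Lemma \ref{Generic-Étale} in the proof of Proposition \ref{Sum-Kato-Formula}—and reflects the standard behaviour of normalized integral models of finite flat rigid morphisms between smooth affinoids whose unit-ball formal models have geometrically reduced special fibers. Base change along $s'\hookrightarrow \scrS'$ preserves flatness, so $\hf_{s'}:\fX_{s'}\to\scrC_{s'}$ is finite flat as well; consequently, for any closed point $x$ of $\fX_{s'}$ lying over $o$, the stalk $\cO_{\fX_{s'},x}$ is a finitely generated flat, and hence free, module over the noetherian local ring $\cO_{\scrC_{s'},o}$, of strictly positive rank since $x$ belongs to the fiber.

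Next I would exploit the node at $o$. By \ref{Annulus-I} and \ref{Annulus-II}, via the isomorphism $\scrC_{K',m-n}\xrightarrow{\sim}\scrC_{K'}$, the local ring $\cO_{\scrC_{s'},o}$ identifies with $k[\xi,\zeta]_{(\xi,\zeta)}/(\xi\zeta)$, in which $\xi$ and $\zeta$ are nonzero with product zero. A free module of positive rank over a ring is faithful, so the structure map $\cO_{\scrC_{s'},o}\to \cO_{\fX_{s'},x}$ is injective; the images of $\xi$ and $\zeta$ are therefore nonzero in $\cO_{\fX_{s'},x}$ while their product vanishes, and $\cO_{\fX_{s'},x}$ is not an integral domain. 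Since $\fX_{s'}$ is reduced of pure dimension one, smoothness at $x$ would force $\cO_{\fX_{s'},x}$ to be a discrete valuation ring, in particular a domain, which is the desired contradiction.

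The only nontrivial ingredient in this plan is the flatness of $\hf_{s'}$, on which the freeness of the stalk, and hence the zero-divisor argument, depend; everything else is purely formal. I therefore regard this flatness as the one place where care is needed in carrying out the plan.
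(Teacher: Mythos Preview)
The crux of your argument—flatness of $\hf_{K'}$, hence of $\hf_{s'}$—is a genuine gap: it is neither established in the paper nor true in general. The base $\scrC_{K'}\cong\scrC_{K',m-n}$ is regular at $o$ only when $m-n=1$; for larger thickness it carries an $A_{m-n-1}$-singularity, and a finite morphism of normal surfaces over such a singular base need not be flat. Concretely (with $p\neq 2$), take $C=A_K(2,0)$, $X=A_K(1,0)$ and $f\colon u\mapsto u^{2}$; the map on integral models sends $\xi\mapsto u^{2}$, $\zeta\mapsto w^{2}$ between $\cO_K\{\xi,\zeta\}/(\xi\zeta-\pi^{2})$ and $\cO_K\{u,w\}/(uw-\pi)$, and the fibre of $\hf$ over $o$ is $k[u,w]/(u^{2},w^{2},uw)$, of length $3$ against generic rank $2$, so $\hf$ is not flat. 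Your appeal to Lemma~\ref{Generic-Étale} does not help: flatness appears there only as a hypothesis (in fact superfluous for the conclusions drawn), never as a conclusion, and it is nowhere verified for $\hf_{K'}$.

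The strategy is, however, salvageable, and once repaired it essentially coincides with the paper's argument. What you actually need is that $\cO_{\scrC_{s'},o}\to\cO_{\fX_{s'},x}$ is injective, and this follows from the two--dimensional picture without any flatness: $A:=\cO_{\scrC_{K'},o}\to B:=\cO_{\fX_{K'},x}$ is a finite extension of two--dimensional normal local domains, hence injective, so $\Spec B\to\Spec A$ is surjective by lying--over; restricting to $V(\pi)$ shows that $\Spec(B/\pi B)\to\Spec(A/\pi A)$ is surjective, equivalently that $A/\pi A\hookrightarrow B/\pi B$ since $A/\pi A$ is reduced. The paper phrases exactly this as surjectivity of $(\fX_{s'})_{(x)}\to(\scrC_{s'})_{(o)}$ and then observes that if $x$ were smooth the source would be a DVR, factoring through a single branch of the normalisation of the node—contradicting surjectivity. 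So your zero--divisor idea and the paper's branch--counting are dual formulations of the same fact; only the mechanism you invoke (flatness) must be replaced.
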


\begin{proof}
Since $f:X\to C$ is finite, the induced morphism $\hf: \fX_{K'}\to \scrC_{K'}$ is also finite \cite[6.4.1/6]{BGR}. Since $\scrC_{K'}$ is $\cO_{K'}$-flat and $f$ is surjective, $\hf$ is surjective. It follows that the induced morphism on special fibers $\hf_{s'}: \fX_{s'}\to \scrC_{s'}$ is also finite and surjective. We deduce that, for a (geometric) point $x$ of $\fX_{s'}$ above $o\in \scrC_{s'}$, the morphism
\begin{equation}
	\label{Singular-Points 1}
(\fX_{s'})_{(x)} \to (\scrC_{s'})_{(o)}
\end{equation}
on strict localizations is surjective. If $x$ was smooth, then \eqref{Singular-Points 1} would factor through the normalization map $\widetilde{(\scrC_{s'})_{(o)}}\to (\scrC_{s'})_{(o)}$ and the image of $(\fX_{s'})_{(x)} \to \widetilde{(\scrC_{s'})_{(o)}}$ would lie in one of the two connected components of $\widetilde{(\scrC_{s'})_{(o)}}$, contradicting the surjectivity of \eqref{Singular-Points 1}. 
\end{proof}

\section{Positivity, projectivity and rationality of nearby cycles} \label{Positivity}
The notation and assumptions are as in \S \ref{Notations and Conventions}.

\begin{prop}[{T. Saito \cite{SaitoBook}}]
	\label{Saito-Inequality}
Let $Y\to S=\Spec(\cO_K)$ be a normal relative curve and $x$ a closed point of the special fiber $Y_s$ such that $Y-\{x\}$ is smooth over $S$. Then, we have
\begin{equation}
	\label{Saito-Inequality 1}
\dim_{\Lambda} R^1\Psi_{Y/S} (\Lambda)_{x} \geq \lvert P_s(\cO_{Y, x}^{\rm sh})\lvert - 1,
\end{equation}	
where $\Lambda$ is a finite field of characteristic different from $p$ and $P_s(\cO_{Y, x}^{\rm sh})$ denotes the set of height $1$ prime ideals, above the closed point of $S$, of the strict henselization $\cO_{Y, x}^{\rm sh}$ of $\cO_{Y, x}$ with respect to $k$ \eqref{Property (P)-C_K}.
\end{prop}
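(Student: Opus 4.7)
The plan is to deduce the inequality from the exact formula of Kato already invoked in the proof of \ref{Nearby Cycles Lutke} at \eqref{Nearby Cycles Lutke 22}, combined with a short calculation of the $\delta$-invariant of the special fiber. Set $A = \cO_{Y,x}^{\rm sh}$ and $n = |P_s(A)|$. By hypothesis (P), the couple $(Y/S, x)$ falls into Kato's setting \cite[\S 5]{K1}, so $A$ is an object of the algebraic category $\cC_K$ and \cite[Prop.~5.9]{K1} yields the exact identity
\begin{equation*}
\dim_{\Lambda} R^1\Psi_{Y/S}(\Lambda)_x \;=\; 2\delta(A) - n + 1.
\end{equation*}
The desired bound therefore reduces to the purely algebraic inequality $\delta(A) \geq n-1$.

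To prove it, I would analyze the reduced, excellent, $1$-dimensional local ring $A_0 = A/\m_K A$; its minimal primes are exactly the elements $\p_1,\ldots,\p_n$ of $P_s(A)$, so its normalization decomposes as $\widetilde{A_0} \cong \prod_{i=1}^n \widetilde{A_0/\p_i}$. Each factor is the normalization of a henselian local $1$-dimensional excellent domain and hence a henselian DVR whose residue field is algebraic over $k$, thus equal to $k$. Therefore $\widetilde{A_0}$ is semi-local with Jacobson radical $\cJ$ satisfying $\widetilde{A_0}/\cJ \cong k^n$. Moreover, each maximal ideal of $\widetilde{A_0}$ contracts to the maximal ideal $\m_{A_0}/\p_i$ of $A_0/\p_i$, so $\m_{A_0} \subseteq \cJ$, and the induced homomorphism $A_0 \to \widetilde{A_0}/\cJ = k^n$ factors through $A_0/\m_{A_0} = k$ as the diagonal embedding.

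Consequently $\widetilde{A_0}/(A_0 + \cJ) \cong k^n/k \cong k^{n-1}$, and since this is a quotient of $\widetilde{A_0}/A_0$ we obtain $\delta(A) = \dim_k(\widetilde{A_0}/A_0) \geq n-1$, as required. The main technical point is the applicability of Kato's identity, which comes down to verifying property (P) — already part of the hypothesis — and to the standard identification $R^i\Psi_{Y/S}(\Lambda)_x \cong H^i_{\text{\'et}}(Y_{(x)} \times_S \bar{\eta}, \Lambda)$. The rest is an elementary algebraic calculation which crucially relies on $k$ being algebraically closed in order to identify $\widetilde{A_0}/\cJ$ with $k^n$ rather than with a product of proper finite extensions.
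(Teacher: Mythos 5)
Your proof is correct, but it takes a genuinely different route from the paper's. The paper's argument (due to T.~Saito) is geometric: it passes to the MN-model $W\to Y$ given by local semi-stable reduction, uses the spectral sequence for the proper map $f$ to produce the exact sequence $R^1\Psi_{Y/S}(\Lambda)_x \to \bigoplus_w R^1\Psi_{W/S}(\Lambda)_w \to H^2(E,\Lambda)\to 0$, bounds $\dim_\Lambda H^2(E,\Lambda)$ by $\lvert{\rm Sing}(E)\lvert+1$ using connectedness of $E$, and uses that each stalk $R^1\Psi_{W/S}(\Lambda)_w$ at a node is one-dimensional; Kato's identity \cite[5.9]{K1} only enters afterwards, in the corollary, to convert the inequality into $\delta(\cO_{Y,x}^{\rm sh})\geq \lvert P_s(\cO_{Y,x}^{\rm sh})\lvert-1$. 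You run the logic in the opposite direction: you take Kato's identity $\dim_\Lambda R^1\Psi_{Y/S}(\Lambda)_x = 2\delta(A)-\lvert P_s(A)\lvert+1$ as the input (exactly the identity the paper itself quotes in \eqref{Nearby Cycles Lutke 22} and in the proof of \ref{Saito-Vanishing}, so no circularity arises) and prove the purely algebraic bound $\delta(A)\geq \lvert P_s(A)\lvert-1$ by the diagonal argument: the normalization $\widetilde{A_0}$ is a product of henselian DVRs with residue field $k$, the image of $A_0$ in $\widetilde{A_0}/\cJ\cong k^n$ is the diagonal copy of $k$, whence $k^{n-1}$ is a quotient of $\widetilde{A_0}/A_0$. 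The facts you need along the way — that $A_0=A/\m_K A$ is reduced and excellent with finite normalization, and that its minimal primes are exactly $P_s(A)$ — are all recorded in the paper for objects satisfying property (P) (see \ref{Property (P)-C_K} and the identification used around \eqref{Nearby Cycles Lutke 21}), so your reduction is legitimate. What your route buys is brevity and self-containment modulo \cite[5.9]{K1}: it avoids the semi-stable reduction theorem, the spectral sequence, and the $H^2(E)$ count, and it makes the chain of inequalities in \ref{Saito-Vanishing} essentially immediate (since $\dim_\Lambda R^1\Psi = \delta + (\delta - n + 1)\geq \delta$). What the paper's route buys is independence from Kato's formula and a geometric mechanism (comparison with the semi-stable model) that is closer in spirit to the source \cite{SaitoBook}; but as a verification of the stated inequality, your argument is complete.
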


\begin{proof} (T. Saito)\quad
Since the formation of nearby cycles commutes with dominant base change of traits $S'\to S$ \cite[Th. finitude, 3.7]{SGA4demi} and $Y_{s'}\xrightarrow{\sim} Y_s$, we may assume, by the local semi-stable reduction theorem \cite[3.2.2, 4.9]{Saito87}, that we have a proper $S$-morphism $f : W \to  Y$ such that $W$ is regular and semi-stable, the exceptional divisor $E=f^{-1}(x)$ is a strict normal crossing divisor and $f$ induces an isomorphism $W-E\xrightarrow{\sim} Y-\{x\}$ ($W$ is the MN-model of $Y$ in the terminology of \textit{loc. cit.}, which exists by \cite{Lipman}). The special fiber $W_s$ identifies with $E\cup \widetilde{Y_s}$ and $P_s(\cO_{Y, x}^{\rm sh})= \pi^{-1}(x)=E\cap \widetilde{Y_s}$, where $\pi: \widetilde{Y_s}\to Y_s$ is the normalization map.

As $f$ is proper, the base change morphism
\begin{equation}
	\label{Saito-Inequality 2}
R\Psi_{Y/S}(Rf_{\ast} \Lambda)\to Rf_{s\ast}(R\Psi_{W/S}(\Lambda))
\end{equation}
is an isomorphism. Observe that $f$ induces an isomorphism on the generic fibers.
Hence, by the proper base change theorem, the spectral sequence of the hypercohomology of the functor $\Gamma(E, -)$ with respect to the complex $R\Psi_{W/S}(\Lambda)$ yields a spectral sequence
\begin{equation}
	\label{Saito-Inequality 3}
E_2^{p, q}= H^p(E, R^q\Psi_{W/S}(\Lambda)) ~ \Rightarrow ~ R^{p+q}\Psi_{Y/S}(\Lambda)_x.
\end{equation}
For any $q\geq 2$, $R^q\Psi_{W/S}(\Lambda)=0$ \cite[I, Théorème 4.2]{SGA7} and $R^1\Psi_{W/S}(\Lambda)$ is concentrated at the non-smooth locus of $W_s$ \cite[XIII, 2.1.5]{SGA7}, which is $P_s(\cO_{Y, x}^{\rm sh}) ~\cup ~{\rm Sing}(E)$. Therefore, \eqref{Saito-Inequality 3} yields the exact sequence
\begin{equation}
	\label{Saito-Inequality 4}
R^1\Psi_{Y/S}(\Lambda)_x \to \bigoplus_{w\in P_s(\cO_{Y, x}^{\rm sh}) ~\cup ~{\rm Sing}(E)} R^1\Psi_{W/S}(\Lambda)_w \to H^2(E, \Lambda)\to 0.
\end{equation}
On the one hand, the $\Lambda$-dimension of the top cohomology group $H^2(E, \Lambda)$ is the number irreducible components of $E$ \cite[IX, 4.7]{SGA4}; as $E$ is connected, by induction on this number, one sees that it is $\leq \lvert {\rm Sing}(E)\lvert +1$.
On the other hand, as $R^1\Psi_{W/S}(\Lambda)$ coincides with the corresponding tame nearby cycles \cite[3.5]{Illusie1}, we know from \cite[I, Théorème 3.3]{SGA7} that the $\Lambda$-dimension of every $R^1\Psi_{W/S}(\Lambda)_w$ is $1$. Then, \eqref{Saito-Inequality 1} follows from \eqref{Saito-Inequality 4}.
\end{proof}

\begin{cor} [{T. Saito \cite{SaitoBook}}]
	\label{Saito-Vanishing}
We keep the notation and assumption of \ref{Saito-Inequality}.
\begin{itemize}
\item[1.] We have the following inequalities
\begin{equation}
	\label{Saito-Vanishing 1}
\dim_{\Lambda} R^1\Psi_{Y/S} (\Lambda)_{x} \geq \delta(\cO_{Y, x}^{\rm sh})\geq \lvert P_s(\cO_{Y, x}^{\rm sh})\lvert - 1\geq 0,
\end{equation}
where the notation $\delta(\cO_{Y, x}^{\rm sh})$ is as introduced in \ref{Property (P)-C_K}.
\item[2.] The following conditions are equivalent :
\begin{itemize}
\item[(i)] $Y$ is smooth over $S$;
\item[(ii)] $Y_s$ is smooth over $k$;
\item[(iii)] $\delta(\cO_{Y, x}^{\rm sh})=0$;
\item[(iv)] $R^1\Psi_{Y/S} (\Lambda)_{x}=0$.
\end{itemize}
\end{itemize}
\end{cor}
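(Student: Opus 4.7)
The plan is to extract the corollary from Proposition \ref{Saito-Inequality} combined with Kato's formula \cite[Prop.~5.9]{K1}, applicable here because the couple $(Y/S, x)$ satisfies property (P) of \ref{Property (P)-C_K}, so that $\cO_{Y,x}^{\rm sh}$ lies in Kato's algebraic category $\cC_K$. That formula (used already in the same form in \eqref{Nearby Cycles Lutke 22}) gives the exact identity
\[
\dim_{\Lambda} R^1\Psi_{Y/S}(\Lambda)_x \;=\; 2\delta(\cO_{Y,x}^{\rm sh}) - \lvert P_s(\cO_{Y,x}^{\rm sh})\lvert + 1.
\]
Everything in the corollary will follow by comparing this equality with the inequality of \ref{Saito-Inequality}, supplemented by a small observation about normalizations of one-dimensional reduced henselian local rings.

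For part (1), the starting point is that $\lvert P_s(\cO_{Y,x}^{\rm sh})\lvert \geq 1$ since $Y_s$ is nonempty at $x$. Substituting the bound of \ref{Saito-Inequality} into Kato's identity yields $2\delta - \lvert P_s\lvert + 1 \geq \lvert P_s\lvert - 1$, i.e.\ $\delta \geq \lvert P_s\lvert - 1 \geq 0$. Plugging this back gives
$\dim_{\Lambda} R^1\Psi_{Y/S}(\Lambda)_x = \delta + (\delta - \lvert P_s\lvert + 1) \geq \delta$, which establishes the full chain \eqref{Saito-Vanishing 1}.

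For part (2), the equivalence (i) $\Leftrightarrow$ (ii) is standard: $Y\to S$ is flat of relative dimension one over a regular trait, so smoothness at $x$ over $S$ is equivalent to smoothness of the fiber $Y_s$ at $x$. For (ii) $\Leftrightarrow$ (iii), the approach is to set $A_0=\cO_{Y_s,x}^{\rm sh}=\cO_{Y,x}^{\rm sh}/\pi\cO_{Y,x}^{\rm sh}$, a one-dimensional reduced henselian local $k$-algebra; its normalization $\widetilde{A_0}$ is a finite product of henselian discrete valuation rings indexed by $P_s(\cO_{Y,x}^{\rm sh})$. Smoothness of $Y_s$ at $x$ amounts to $A_0$ itself being a discrete valuation ring, which requires simultaneously $\delta=0$ and $\lvert P_s\lvert=1$; but part (1) already forces $\lvert P_s\lvert\leq 1$ whenever $\delta=0$, so the two conditions collapse into $\delta(\cO_{Y,x}^{\rm sh})=0$. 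Finally, (iv) $\Rightarrow$ (iii) is immediate from the first inequality of \eqref{Saito-Vanishing 1}, while (iii) $\Rightarrow$ (iv) follows again from Kato's identity: $\delta=0$ forces $\lvert P_s\lvert=1$, whence $\dim_{\Lambda} R^1\Psi = 2\cdot 0 - 1 + 1 = 0$.

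The substantive work has already been carried out in Proposition \ref{Saito-Inequality}; the corollary itself is a formal consequence, and I do not anticipate a serious obstacle beyond carefully verifying that the characterization ``$A_0$ is a discrete valuation ring $\Leftrightarrow \delta(A_0)=0$ and $\lvert P_s\lvert=1$'' goes through cleanly in this henselian setting, which boils down to the decomposition of $\widetilde{A_0}$ as a product of henselian discrete valuation rings with residue field $k$.
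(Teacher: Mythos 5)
Your proposal is correct and follows essentially the same route as the paper: part (1) is exactly the combination of Kato's identity $\dim_{\Lambda} R^1\Psi_{Y/S}(\Lambda)_x = 2\delta - \lvert P_s\lvert + 1$ from \cite[5.9]{K1} with the inequality of \ref{Saito-Inequality}, and part (2) rests on the same flatness and normality observations. The only cosmetic difference is that you deduce (iii) $\Rightarrow$ (iv) directly from Kato's identity, whereas the paper routes it through the standard fact that (i) implies (iv); both are fine.
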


\begin{proof}
1. \quad We know from Kato \cite[5.9]{K1} that 
\begin{equation}
	\label{Saito-Vanishing 2}
\dim_{\Lambda} R^1\Psi_{Y/S} (\Lambda)_{x} + \lvert P_s(\cO_{Y, x})\lvert - 1 = 2\delta(\cO_{Y, x}).
\end{equation}
With \eqref{Saito-Inequality 1}, this implies \eqref{Saito-Vanishing 1}.

2. \quad Since $Y\to S$ is flat, (i) and (ii) are equivalent. The latter  is also equivalent to $Y_s$ being normal, that is $\delta(\cO_{Y, x})=0$. Finally, it is well-known that (i) implies (iv), and the implication (iv) $\Rightarrow$ (iii) follows from \eqref{Saito-Vanishing 1}.
\end{proof}

\subsection{}\label{Discriminant}
We resume the notation and assumptions of \ref{Annulus-II} and \ref{Models Formels} for the morphism $f: X \to C$.
For a rational number $t\in [r', r]$, let $f^{[t]}: X^{[t]}\to C^{[t]}$ be the restriction of $f$ over the annulus $C^{[t]}$ of radius $\lvert \pi\lvert^t$ with $0$-thickness.
It follows from \cite[4.2, 4.4]{VCS} that, for an extension $K'$ of $K$ admissible for $f^{[t]}$ \eqref{Models Formels}, the unit ball $\cO^{\circ}(X_{K'}^{[t]})$ of the affinoid $K'$-algebra $\cO(X_{K'}^{[t]})$ of $X^{[t]}$, with respect to its sup-norm, is a locally free module over the unit ball $\cO(\cC_{K'}^{[t]})$ (see also \cite[4.16]{VCS}). The associated discriminant
\begin{equation}
	\label{Discriminant 1}
\d_f[t]=\d_{\cO^{\circ}(X_{K'}^{[t]})/\cO^{\circ}(\cC_{K'}^{[t]})}
\end{equation}
is an invertible ideal of $\cO^{\circ}(\cC_{K'}^{[t]})$. Hence, it has a well-defined sup-norm $\lvert \d_f [t] \lvert_{\rm sup}$ \cite[4.15]{VCS}, which can be written as
\begin{equation}
	\label{Discriminant 2}
\lvert \d_f[t]\lvert_{\rm sup}=\lvert \pi\lvert^{\partial_f(t)},  \quad {\rm for ~ some ~ unique} \quad \partial_f(t) \in \bQ,
\end{equation}
thus defining the \textit{discriminant function} $\partial_f: [r', r]\cap \bQ{\geq 0}\to \bQ$ associated to $f$.\newline

\noindent We note that, by \cite[4.16, 4.17]{VCS}, the definition is consistent with the definition given in \cite[4.22]{VCS} for the discriminant function associated to a rigid morphism $X\to D$ which is finite and generically étale. (We note also that $\partial_f$ here was denoted by $\partial_f^{\alpha}$ in \textit{loc. cit}.)\newline 

\noindent Lütkebohmert \cite[2.3, 2.6]{Lutke} proved the following variational result for $\partial_f$.

\begin{prop}
	\label{Variation-Lutke}
Let $r=r_{n+1}< r_n <\cdots < r_1< r_0=r'$ be a sequence containing all critical radii of $f$ \eqref{Decomposition SemiStable} and $\Delta_i=\coprod_j \Delta_{ij}$ the associated decomposition of $\Delta_i=f^{-1}\left(A^{\circ}(r_{i-1}, r_i)\right)$ into a disjoint union of open annuli such that the restriction $f\lvert \Delta_i$ is étale. Then, the function $\partial_f$ is affine on $\rbrack r_i, r_{i-1}\lbrack \cap \bQ$ and its right slope at $t\in \lbrack r_i, r_{i-1}\lbrack \cap \bQ$ is
\begin{equation}
	\label{Variation-Lutke 1}
	\frac{d}{dt}\partial_f(t^+)=\sigma_i - d + \delta_f(i),
\end{equation}
where $\sigma_i$ is the total order of the derivative of $f\lvert \Delta_i$ \cite[4.5]{VCS} and $\delta_f(i)$ is the number of connected components of $\Delta_i$ $($i.e. the number of $\Delta_{ij}$'s$)$.
\end{prop}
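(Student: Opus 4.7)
The plan is to reduce the problem to a single-annulus calculation via additivity of discriminants, then to compute the sup-norm of the discriminant on each étale annular component explicitly.

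By the decomposition hypothesis, for each rational $t \in (r_i, r_{i-1})$ the preimage $f^{-1}(C^{[t]})$ decomposes as $\coprod_j A(t/d_{ij}, t/d_{ij})$, and $f^{[t]}$ restricts on each piece to a finite étale morphism $f_{ij}^{[t]}$ of degree $d_{ij}$. Since discriminant ideals of finite flat extensions are multiplicative on disjoint decompositions of the source, I obtain $\partial_f(t) = \sum_j \partial_{f_{ij}}(t)$, and so it suffices to treat each $f_{ij}$ separately. Fixing $j$ and setting $e := d_{ij}$, the coordinate $\xi_{ij}$ satisfies the monic polynomial $P(Y) = Y^e(1 + h_{ij}(Y)) - \xi$ over the base unit ball $\mathcal{O}^\circ(C^{[t]}_{K'})$; since $|h_{ij}|_{\sup} < 1$, this $P$ is the minimal polynomial of $\xi_{ij}$ and the extension of unit balls is monogenic. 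Hence the different is generated by $P'(\xi_{ij}) = f_{ij}'(\xi_{ij})$ and the discriminant ideal by its norm, so
\[
|\mathfrak{d}_{f_{ij}}[t]|_{\sup} = |f_{ij}'|^{e}_{\sup,\, A(t/e,\, t/e)}, \qquad \partial_{f_{ij}}(t) = e \cdot v_{K'}(f_{ij}')\big|_{A(t/e,\, t/e)}.
\]
Étaleness of $f_{ij}$ on $\Delta_{ij}$ means $f_{ij}'$ has no zeros there, so it is a Laurent-analytic unit, and $s \mapsto v_{K'}(f_{ij}')|_{A(s, s)}$ is affine in $s$ with integer slope, determined by the unique Newton-polygon segment of $f_{ij}'$ over $(r_{i-1}/e, r_i/e)$. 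Substituting $s = t/e$ and summing over $j$ proves the affinity of $\partial_f$ on $(r_i, r_{i-1})$ with integer slope.

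To identify the slope I would expand
\[
f_{ij}'(\xi_{ij}) = e\xi_{ij}^{e-1}(1 + h_{ij}) + \xi_{ij}^{e} h_{ij}'(\xi_{ij})
\]
and read off its Newton slope over $(r_{i-1}/e, r_i/e)$; this contribution is, by definition, the $\Delta_{ij}$-part of the \emph{total order of the derivative} of $f|\Delta_i$ in the sense of \cite[4.5]{VCS}, whose sum over $j$ is $\sigma_i$. The factor $e$ in front of $v_{K'}(f_{ij}')$, combined with a bookkeeping change from the source coordinate $\xi_{ij}$ to the base coordinate $\xi$ (using $\xi_{ij}^{e} \sim \xi$), produces the correction $-d + \delta_f(i) = -\sum_j (d_{ij} - 1)$ and yields the announced
\[
\tfrac{d}{dt}\partial_f(t^+) = \sigma_i - d + \delta_f(i).
\]

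The main technical hurdle is the Newton-polygon analysis of $f_{ij}'$ in the wildly ramified regime $p \mid e$: in that case the apparent leading term $e\xi_{ij}^{e-1}$ can be dominated by $\xi_{ij}^{e}h_{ij}'$, so the dominant Newton slope need not be the tame value $e - 1$. One must trace carefully how the coefficients of $h_{ij}$ enter and verify that the Newton slopes, summed over all components and corrected by the change of coordinates, reconstitute the geometric invariant $\sigma_i$ as defined in \cite[4.5]{VCS}.
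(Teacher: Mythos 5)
The paper's own proof here is a one-line citation to Lütkebohmert's argument for the disc as detailed in \cite[4.23]{VCS}, so your task was essentially to reconstruct that argument for the annulus. Your overall strategy — decompose along the fibers, reduce to a single annular component, and compute the discriminant via a different/derivative — is indeed the right one. However, there is a genuine gap at the central step.

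You assert that $\xi_{ij}$ ``satisfies the monic polynomial $P(Y)=Y^{e}(1+h_{ij}(Y))-\xi$'' over the base unit ball and that this is the minimal polynomial, so that the different is generated by $P'(\xi_{ij})=f_{ij}'(\xi_{ij})$. But $h_{ij}$ is an analytic function on the annulus $\Delta_{ij}$, hence a Laurent series in $\xi_{ij}$ with (in general) infinitely many negative powers; $Y^{e}(1+h_{ij}(Y))-\xi$ is therefore not a polynomial in $Y$ at all, and it cannot be ``the minimal polynomial'' of $\xi_{ij}$. Concretely, with $e=2$ and $h(T)=cT^{-3}$ ($\lvert c\rvert<1$) one has $S=T^{2}+cT^{-1}$, so $T$ satisfies $T^{3}-ST+c=0$; the actual degree-two minimal polynomial of $T$ over $K'\{S,S^{-1}\}$ is a nontrivial quadratic factor of this cubic with coefficients that are themselves convergent Laurent series in $S$, not the expression you wrote. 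Establishing monogenicity of the extension of unit balls — and that the norm of the generator of the different coincides with the norm of $f_{ij}'$ — requires a genuine argument (this is precisely the content of Lütkebohmert's computation, and of the analysis in \cite[4.23]{VCS}), not a formal manipulation. Beyond this, you yourself flag that the Newton-polygon identification of the slope with the invariant $\sigma_i$ in the wild case $p\mid e$ is an unresolved hurdle: when $p\mid e$ in characteristic $p$ the term $e\xi_{ij}^{e-1}$ vanishes outright, and the derivative is governed entirely by $\xi_{ij}^{e}h_{ij}'(\xi_{ij})$; your sketch does not carry this through. As it stands, the two load-bearing claims of the proof — the form of the minimal polynomial/different, and the computation of the resulting Newton slope — are both unjustified.
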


\begin{proof}
Although this result is stated in \cite[2.6]{Lutke} for the closed unit disc $D$ instead of $C$, i.e. for $r=\infty$ and $r'=0$, the proof, detailed in \cite[4.23]{VCS}, works for the above more general statement.
\end{proof}

\begin{prop}
	\label{Discriminant-Convex}
Assume that $f : X\to C$ is étale. Then, the discriminant function $\partial_f$ is convex.
\end{prop}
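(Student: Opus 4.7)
The plan is to reduce convexity of $\partial_f$ to a pointwise slope inequality at each critical radius and to establish the latter via the nearby cycles formula \ref{Nearby Cycles Lutke} combined with T. Saito's non-vanishing \ref{Saito-Vanishing}.

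By Proposition \ref{Variation-Lutke}, $\partial_f$ is continuous and piecewise affine, with slope constant on each maximal open subinterval of the complement of the finite set of critical radii $r = r_{n+1} < r_n < \cdots < r_1 < r_0 = r'$. Convexity therefore amounts to showing that the slope is non-decreasing across each critical radius $r_i$ for $1 \leq i \leq n$. Since $f$ is étale, the derivative of $f$ is invertible on each component $A_{K'}^{\circ}(r_{i-1}/d_{ij}, r_i/d_{ij})$ of the semistable decomposition of \ref{Decomposition SemiStable}, so the total order $\sigma_i$ vanishes and the slope formula \eqref{Variation-Lutke 1} simplifies: it suffices to show $\delta_f(i) \geq \delta_f(i+1)$ for every $1 \leq i \leq n$.

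Fix such an $r_i$ and choose non-critical radii $t \in \rbrack r_{i+1}, r_i \lbrack$ and $t' \in \rbrack r_i, r_{i-1} \lbrack$ so that $r_i$ is the unique critical radius in $\rbrack t, t' \lbrack$. The restricted morphism $f : f^{-1}(A(t', t)) \to A(t', t)$ satisfies the hypotheses of \ref{Nearby Cycles Lutke}: étaleness over $C^{[t]}$ and $C^{[t']}$, and the required decompositions into disjoint unions of $0$-thickness annuli, both guaranteed by \ref{Decomposition SemiStable}. I would then apply the nearby cycles formula \eqref{Nearby Cycles Lutke 2}. The étale hypothesis forces $d_\eta(B_j/A) = 0$ via Lemma \ref{AnnulationDe-d-Eta} and $\sigma = \sigma' = 0$ as noted above. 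Identifying $\delta_f$ in the NCF (outer boundary at $t$) with $\delta_f(i+1)$ and $\delta'_f$ (inner boundary at $t'$) with $\delta_f(i)$ from Lütkebohmert's decomposition, the formula reduces to
\begin{equation*}
\delta_f(i) - \delta_f(i+1) \;=\; \sum_j \bigl( 2\delta(B_j) - |P_s(B_j)| \bigr),
\end{equation*}
where the sum runs over the geometric points $\bar{x}_j$ of $\fX_{s'}$ above the singular point $o$ of $\scrC_{s'}$, and $B_j = \cO_{\fX_{K'}, \bar{x}_j}$.

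To conclude, I would invoke the identity \eqref{Nearby Cycles Lutke 22} to rewrite each summand as $\dim_\Lambda R^1\Psi_{Y/S'}(\Lambda)_{\bar{x}_j} - 1$. By Lemma \ref{Singular-Points} every $\bar{x}_j$ lies in the non-smooth locus of $\fX_{s'}$, hence of the algebraization $Y$ that enters \eqref{Nearby Cycles Lutke 22}; Corollary \ref{Saito-Vanishing} then forces $R^1\Psi_{Y/S'}(\Lambda)_{\bar{x}_j} \neq 0$, so its $\Lambda$-dimension is at least $1$ and every summand is non-negative. This yields $\delta_f(i) \geq \delta_f(i+1)$ and hence convexity at $r_i$. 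The essential new ingredient beyond the formulas developed in the preceding sections is Saito's positivity \ref{Saito-Vanishing}, which provides the pointwise lower bound $\dim R^1\Psi \geq 1$ at non-smooth points. The main bookkeeping obstacle is the identification of the boundary data $(\sigma, \delta_f)$ and $(\sigma', \delta'_f)$ from the NCF with the slope data of \ref{Variation-Lutke} on the adjacent open annuli, which amounts to observing that the derivative of $f$ is a fixed rigid analytic function on each open annular component whose order on a $0$-thickness slice agrees with its order on the surrounding open annulus.
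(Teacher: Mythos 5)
Your overall strategy is the paper's: reduce convexity to comparing the slopes of $\partial_f$ on two consecutive open annuli via \eqref{Variation-Lutke 1}, convert the slope difference into local data at the points $\ox_j$ above $o$ via the nearby cycles formula \eqref{Nearby Cycles Lutke 2} together with \ref{AnnulationDe-d-Eta}, and conclude with Kato's identity \eqref{Nearby Cycles Lutke 22}, \ref{Singular-Points} and Saito's non-vanishing \ref{Saito-Vanishing}. However, there is a genuine error in the reduction: the claim that étaleness of $f$ forces the total orders $\sigma_i$ (and likewise $\sigma$, $\sigma'$ in \eqref{Nearby Cycles Lutke 2}) to vanish is false. Étaleness makes $f'$ invertible on each annular component, but an invertible function on an annulus can have any integer order: for the Kummer covering $\xi\mapsto \xi^d$ with $p\nmid d$, which is étale, the derivative $d\xi^{d-1}$ has order $d-1$, and indeed $\partial_f(t)=(d-1)t$ has slope $d-1=\sigma_i-d+\delta_f(i)$, not $1-d$ as your simplification would give. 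Consequently both your intermediate reduction (``it suffices to show $\delta_f(i)\geq \delta_f(i+1)$'') and your displayed identity $\delta_f(i)-\delta_f(i+1)=\sum_j\bigl(2\delta(B_j)-\lvert P_s(B_j)\lvert\bigr)$ are false in general; note also that for wildly ramified étale covers (e.g.\ Artin--Schreier covers of the annulus) the $\sigma_i$ genuinely change across critical radii while the $\delta_f(i)$ may not, so the reduction is not merely unproved but wrong as a statement.

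The repair is small and turns your argument into the paper's proof: do not discard the $\sigma$'s. By \eqref{Variation-Lutke 1} the difference of slopes across $r_i$ is $(\sigma_i+\delta_f(i))-(\sigma_{i+1}+\delta_f(i+1))$, and the nearby cycles formula \eqref{Nearby Cycles Lutke 2}, applied over $A(t',t)$ with your non-critical $t,t'$ and with $d_{\eta}(B_j/A)=0$ by \ref{AnnulationDe-d-Eta}, expresses exactly this combination (the $\sigma$'s enter both formulas with the same coefficients and cancel in the difference) as $\sum_j\bigl(2\delta(B_j)-\lvert P_s(B_j)\lvert\bigr)$. From there your concluding step is correct and is the paper's: by \eqref{Nearby Cycles Lutke 22} each summand equals $\dim_{\Lambda}R^1\Psi_{Y/S'}(\Lambda)_{\ox_j}-1$, which is $\geq 0$ since each $\ox_j$ is non-smooth by \ref{Singular-Points} and $R^1\Psi$ is nonzero there by \ref{Saito-Vanishing}. (One should also record, as the paper does elsewhere, that étaleness of $f$ gives $\Omega^1_X\cong f^{\ast}\Omega^1_C\cong\cO_X$, so the ``trivial canonical sheaf'' hypothesis of \ref{Nearby Cycles Lutke} is satisfied.)
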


\begin{proof}
We use the notation in the proof of \ref{Nearby Cycles Lutke}. For every $j=1, \ldots, N$, we have from \eqref{Nearby Cycles Lutke 22}
\begin{equation}
	\label{Discriminant-Convex 1}
1 - \dim_{\Lambda} H_{\textrm{ét}}^1(Y'_{(\ox_j)}\times \overline{\eta}', \Lambda)= - 2\delta(B_j) + \lvert P_s(B_j)\lvert.
\end{equation}
As $\ox_j$ is singular \eqref{Singular-Points}, the stalk $R^1\Psi_{Y'/S'}(\Lambda)_{\ox_j}= H_{\textrm{ét}}^1(Y'_{(\ox_j)}\times \overline{\eta}', \Lambda)$ is nonzero \eqref{Saito-Vanishing}. It follows that $ - 2\delta(B_j) + \lvert P_s(B_j)\lvert\leq 0$. By \ref{AnnulationDe-d-Eta}, \eqref{Nearby Cycles Lutke 2} and \eqref{Variation-Lutke 1}, we deduce that 
\begin{equation}
	\label{Discriminant-Convex 2}
\frac{d}{dt}\partial_f(r+) - \frac{d}{dt}\partial_f(r'-)\leq 0.
\end{equation}
By \ref{Decomposition SemiStable}, we can apply this to the restriction of $f$ above the annulus $A(t, t')$ for all non-critical radii $t <t'$, hence the convexity of $\partial_f$.
\end{proof}

\subsection{}\label{Euler-Characteristic-Complex}
If $\Lambda$ is an artinian ring and $C^{\bullet}$ is a bounded complex of $\Lambda$-modules whose cohomology groups are finitely generated $\Lambda$-modules, then, following Ramero \cite[3.2.5]{Ramero}, we define its Euler characteristic as
\begin{equation}
	\label{Euler-Characteristic-Complex 1}
\chi(C^{\bullet})= \sum_{i\in \bZ} (-1)^{i} \frac{{\rm length}_{\Lambda} (H^{i}(C^{\bullet}))}{{\rm length}_{\Lambda}(\Lambda)}.
\end{equation}
We note that, if $C^{\bullet}$ is a complex of free $\Lambda$-modules, we have the identity
\begin{equation}
	\label{Euler-Characteristic-Complex 2}
\chi(C^{\bullet})= \sum_{i\in \bZ} (-1)^{i}{\rm rk}_{\Lambda}(C^{i}).
\end{equation}

\begin{lem}
	\label{Euler-Poincaré-Negatif}
Let $C\to S$ be a normal relative curve and $o$ a closed point of the special fiber $C_s$ such that $C-\{o\}$ is smooth over $S$.
Then, for every integer $n\geq 1$ and every lisse sheaf of $\bZ/\ell^n\bZ$-modules $\cF$ on $C_{\eta}$, we have
\begin{equation}
	\label{Euler-Poincaré-Negatif 1}
\chi(R\Psi_{C/S}(\cF)_o)\leq 0.
\end{equation}
\end{lem}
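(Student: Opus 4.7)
The plan is to reduce the inequality to the constant-coefficient case, where it follows from Corollary~\ref{Saito-Vanishing} via Kato's formula \cite[5.9]{K1}. First, using the additivity of $\chi$ on short exact sequences and the filtration $\cF \supset \ell\cF \supset \cdots \supset \ell^{n-1}\cF \supset 0$, I would reduce to the case where $\cF$ is a lisse sheaf of $\bF_\ell$-modules. Its monodromy representation $\rho$ factors through a finite quotient $G$ of $\pi_1(C_\eta)$; letting $g : V \to C_\eta$ be the associated connected Galois étale cover (so that $g^*\cF \cong \bF_\ell^r$ with $r = \rk(\cF)$) and $\bar g : D \to C$ the normalization of $C$ in $V$, one obtains a normal relative curve $D \to S$ with $\bar g$ finite and $G$-equivariant. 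The restriction of $\bar g$ over $C \setminus \{o\}$ is finite étale, so $D \setminus \bar g_s^{-1}(o)$ is smooth over $S$; in particular, each couple $(D, y)$ for $y \in \bar g_s^{-1}(o)$ satisfies the hypothesis of Corollary~\ref{Saito-Vanishing}, yielding
\[
\chi(R\Psi_{D/S}(\bF_\ell)_y) \;=\; |P_s(\cO_{D,y}^{\mathrm{sh}})| - 2\delta(\cO_{D,y}^{\mathrm{sh}}) \;\leq\; 0.
\]

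Next, the compatibility of nearby cycles with proper direct image, applied to the finite morphism $\bar g$, gives
\[
R\Psi_{C/S}(\bar g_{\eta *} g^*\cF)_o \;\xrightarrow{\sim}\; \bigoplus_{y \in \bar g_s^{-1}(o)} R\Psi_{D/S}(\bF_\ell^r)_y,
\]
hence $\chi(R\Psi_{C/S}(g_*g^*\cF)_o) = r \sum_y \chi(R\Psi_{D/S}(\bF_\ell)_y) \leq 0$. The remaining task is to descend from $g_*g^*\cF$ back to $\cF$ itself. Denoting by $F$ the Milnor fiber of $C$ at $o$ and by $F_y$ the Milnor fiber of $D$ at $y$, the map $\coprod_y F_y \to F$ is a finite étale $G$-torsor, and the associated Hochschild--Serre spectral sequence
\[
E_2^{p,q} \;=\; H^p\bigl(G,\, H^q(\textstyle\coprod_y F_y,\, \bF_\ell^r)\bigr) \;\Longrightarrow\; R^{p+q}\Psi_{C/S}(\cF)_o,
\]
together with Shapiro's lemma for the permutation action of $G$ on $\bar g_s^{-1}(o)$, reduces the Euler-characteristic computation to the $G_{y_0}$-equivariant cohomology of $R\Psi_{D/S}(\bF_\ell^r)_{y_0}$ at a chosen representative $y_0$, twisted by $\rho|_{G_{y_0}}$.

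The hard part will be precisely this descent step when $\ell$ divides $|G|$: then $\bF_\ell[G]$ is not semisimple, $\cF$ is not a direct summand of $g_*g^*\cF$, and the naive projection argument fails. To overcome it, I would aim to establish a local Grothendieck--Ogg--Shafarevich-type formula
\[
\chi\bigl(R\Psi_{C/S}(\cF)_o\bigr) \;=\; r \cdot \chi\bigl(R\Psi_{C/S}(\bF_\ell)_o\bigr) \;-\; \sum_{\p \in P_s(\cO_{C,o}^{\mathrm{sh}})} \sw_\p(\cF),
\]
in which the first term on the right is non-positive by Corollary~\ref{Saito-Vanishing} applied to $(C, o)$ and the second is non-positive because the Swan conductors, attached to $\cF$ at each geometric branch $\p$ via Kato's ramification theory from \S\ref{Kato-Hurwitz formula}, are non-negative integers. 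Such an identity, in the spirit of Ramero's arguments recalled in Proposition~\ref{Intro-Perfection-Projectivity-Rationality}, would conclude the proof.
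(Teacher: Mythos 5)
Your opening reduction (to $\bF_\ell$-coefficients via the filtration $\cF\supset\ell\cF\supset\cdots$) matches the paper, and the passage to the trivializing Galois cover with the finite-pushforward comparison is sound as far as it goes, but the argument is not complete: the entire burden falls on the descent from $g_{*}g^{*}\cF$ back to $\cF$, and there you only state a plan. The local Grothendieck--Ogg--Shafarevich-type identity
\begin{equation*}
\chi\bigl(R\Psi_{C/S}(\cF)_o\bigr)=r\,\chi\bigl(R\Psi_{C/S}(\bF_\ell)_o\bigr)-\sum_{\p\in P_s(\cO_{C,o}^{\mathrm{sh}})}\sw_\p(\cF)
\end{equation*}
is left unproven, and it is not a routine step: a formula relating the Euler characteristic of the nearby-cycles stalk to Swan conductors along the vertical branches is of at least the same depth as the results this lemma feeds into (it is close in content to Theorem \ref{Realization-NearbyCycles-Swan}, whose proof uses \ref{Euler-Poincaré-Negatif} through \ref{Projective-TorsionCoeffs}), so invoking it here is both a genuine gap and a circularity risk. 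The Hochschild--Serre route you sketch also cannot by itself produce the Euler characteristic when $\ell$ divides $\lvert G\lvert$, since $H^p(G,-)$ is then nonzero in unboundedly many degrees, as you yourself acknowledge.

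The missing idea is much more elementary and avoids the cover altogether. Since the Milnor tube $C_{(o)}\times_S\overline{\eta}$ has cohomological dimension $\leq 1$, one has $\chi(R\Psi_{C/S}(\cF)_o)=\dim H^0-\dim H^1$ of the tube, and additivity of $\chi$ lets you reduce not only to $\bF_\ell$-coefficients but further to the case where the restriction of $\cF$ to the tube is irreducible as a representation $V$ of its fundamental group $\pi_1$. If that restriction is non-constant, then $R^0\Psi_{C/S}(\cF)_o=V^{\pi_1}=0$ and the inequality is immediate; if it is constant, then $R^0\Psi_{C/S}(\cF)_o\cong\bF_\ell$ while Saito's non-vanishing result (\ref{Saito-Vanishing}) gives $R^1\Psi_{C/S}(\cF)_o\neq 0$ at the non-smooth point $o$, so again $\chi\leq 0$. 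This is the paper's proof; no trivializing cover, pushforward comparison, or conductor formula is needed.
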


\begin{proof}
As $\chi$ is additive, the descending filtration $\cF\supset \ell\cF\supset \ell^2\cF\supset \cdots \supset \ell^n\cF$, whose quotients are lisse $\bF_{\ell}$-sheaves, reduces us to the case where $\cF$ is a lisse $\bF_\ell$-sheaf. By the same argument, we can assume that the restriction $\cF_{\lvert C_{(o)}\times_S\overline{\eta}}$ is irreducible. Then, if $V$ is the associated representation of the étale fundamental group $\pi_1$ of the connected $k(\overline{\eta})$-curve $C_{(o)}\times_S \overline{\eta}$, its invariant subspace $V^{\pi_1}$ is either $0$ or $V$.
If $\cF$ is non-constant, it follows that
\begin{equation}
	\label{Euler-Poincaré-Negatif 2}
R^0\Psi_{C/S}(\cF)_o= H^0(C_{(o)}\times_S\overline{\eta}, \cF_{\lvert C_{(o)}\times_S\overline{\eta}})=V^{\pi_1}=0,
\end{equation}
which proves the desired inequality. Otherwise, we can assume that $\cF$ is the constant sheaf with stalk $\bF_{\ell}$. Then, $R^0\Psi_{C/S}(\cF)_o\cong \bF_{\ell}$ and, by \ref{Saito-Vanishing}, $R^1\Psi_{C/S}(\cF)_o\neq 0$, which yields \eqref{Euler-Poincaré-Negatif 1}.
\end{proof}

\subsection{}\label{Groupes-de-Grothendieck}
For $\Lambda$ a finite field,  or a complete discrete valuation ring or a complete discrete valuation field,  and $G$ a finite group, we denote by $R_{\Lambda}(G)$ (resp. $P_{\Lambda}(G)$) the Grothendieck group of the category of finitely generated (resp. finitely generated projective) $\Lambda[G]$-modules. We denote by $\overline{R}_{\bQ}(G)$ the subgroup of $R_{\bQ_{\ell}}(G)$ formed by elements with rational coefficients.

\noindent A complex of modules over a (unitary but not necessarily commutative) ring is \textit{perfect} if it is quasi-isomorphic to a bounded complex of finitely generated and projective modules.
The class $[C^{\bullet}]\in P_{\Lambda}(G)$ of a perfect complex of $\Lambda[G]$-modules $C^{\bullet}$ is defined as follows. Let $P^{\bullet}\xrightarrow{\sim} C^{\bullet}$ be a quasi-isomorphism, where $P^{\bullet}$ is a bounded complex of finitely generated projective $\Lambda[G]$-modules, and put $[C^{\bullet}]=\sum_i (-1)^{i}[P^{i}]$. This definition is independent of $P^{\bullet}$ and the chosen quasi-isomorphism $P^{\bullet}\xrightarrow{\sim} C^{\bullet}$ \cite[VIII.2]{SGA5}; it thus extends to $C^{\bullet}$ in the essential image $D^b_{\rm Perf}(\Lambda[G])$,  in the derived category $D^b(\Lambda[G])$ of bounded complexes of $\Lambda[G]$-modules,  of the homotopy category of perfect complexes $K_{\rm Perf}(\bZ_\ell [G])$ \cite[VIII.7]{SGA5}.

\subsection{}\label{Stalk-of-Nearby-Cycles-Complexes}
Let $Y/S$ be a normal relative curve and $x$ a closed point of the special fiber $Y_s$ such that $Y-\{x\}$ is smooth over $S$.
Let $G$ be a finite group with an admissible right action on $Y$ through $S$-automorphisms such that the induced action on the generic fiber $Y_{\eta}$ is free,  and denote by $\St_x$ the stabilizer of $x$.  We denote by $\phi: Y\to Y/G=C$ the quotient morphism and put $\phi(x)=o$. 
Let $\ell\neq p$ be a prime number and $n\geq 1$ an integer,  and put $\Lambda=\bZ/\ell^n\bZ$.  Then,  the constant sheaf $\Lambda_{Y_{\eta}}$ is naturally endowed with a canonical structure of a $G$-sheaf.  \newline
As $\phi$ is finite,  we have the following commutative diagram
\begin{equation}
	\label{Stalk-of-Nearby-Cycles-Complexes 1}
\xymatrix{
R\Psi_{C/S}(R\phi_{\eta\ast}\Lambda_{Y_{\eta}}) \ar[r]^{\sim} & R\phi_{s\ast}(R\Psi_{Y/S}(\Lambda_{Y_{\eta}})) \ar[d]^{\wr} \\
R\Psi_{C/S}(\phi_{\eta\ast}\Lambda_{Y_{\eta}}) \ar[u]^{\wr} \ar[r]^{\sim} & \phi_{s\ast}(R\Psi_{Y/S}(\Lambda_{Y_{\eta}})).
}
\end{equation}
where the top horizontal arrow is the proper base change isomorphism.  Taking the stalk at $o$ of the below horizontal arrow in \eqref{Stalk-of-Nearby-Cycles-Complexes 1} yields an isomorphism
\begin{equation}
	\label{Stalk-of-Nearby-Cycles-Complexes 2}
R\Gamma(C_{(o)}\times_S\overline{\eta}, \phi_{\eta\ast}\Lambda_{Y_{\eta}}) \xrightarrow{\sim} R\Psi_{Y/S}(\Lambda_{Y_{\eta}})_x \quad {\rm in} ~ D^+(\Lambda).
\end{equation}
As $G$ acts trivially on $C_{\eta}$,  $\phi_{\eta\ast}\Lambda_{Y_{\eta}}$ is a sheaf of $\Lambda[\St_x]$-modules on $C_{\eta}$,  and thus the left-hand-side of \eqref{Stalk-of-Nearby-Cycles-Complexes 2} lives in $D^+(\Lambda[\St_x])$.  Through \eqref{Stalk-of-Nearby-Cycles-Complexes 2},  $R\Psi_{Y/S}(\Lambda_{Y_{\eta}})_x$ thus defines an element of $D^+(\Lambda[\St_x])$.

\begin{prop}
	\label{Projective-TorsionCoeffs}
With the above notation and assumptions,  the following statements hold.
\begin{itemize}
\item[(i)] The stalk $R\Psi_{Y/S}(\Lambda_{Y_{\eta}})_x$ is (isomorphic to) a perfect complex of $\Lambda[\St_x]$-modules of amplitude in $[0, 1]$.
\item[(ii)] The element $[R\Psi_{Y/S}(\bF_{\ell, Y_{\eta}})_x[1]]$ of $P_{\bF_{\ell}}(\St_x)$ is the class of a finitely generated and  projective $\bF_{\ell}[\St_x]$-module.
\end{itemize}
\end{prop}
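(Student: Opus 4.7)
The plan is to prove (i) first, using the identification with $R\Gamma$ on the Milnor fibre of the base, and then to extract (ii) by analysing an explicit two-term projective representative produced in the course of (i).

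For (i), the amplitude part is clear: we have $R^i\Psi_{Y/S}(\Lambda_{Y_\eta}) = 0$ for $i \geq 2$ on any relative curve by \cite[I, Théorème 4.2]{SGA7} (as already invoked in the proof of \ref{Nearby Cycles Lutke}), and $R^i\Psi = 0$ for $i < 0$ trivially. For the perfection as a $\Lambda[\St_x]$-complex, I would use the identification \eqref{Stalk-of-Nearby-Cycles-Complexes 2} to reduce to showing that $R\Gamma(C_{(o)} \times_S \overline{\eta}, \phi_{\eta\ast}\Lambda_{Y_\eta})$ is perfect over $\Lambda[\St_x]$. Since $G$ acts freely on $Y_\eta$, the morphism $\phi_\eta$ is finite étale Galois of group $G$, so $\phi_{\eta\ast}\Lambda_{Y_\eta}$ is locally constant constructible, with stalks isomorphic to $\Lambda[G]$; after restriction of scalars from $\Lambda[G]$ to $\Lambda[\St_x]$ these become free of rank $[G:\St_x]$. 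The Milnor fibre $C_{(o)} \times_S \overline{\eta}$ is a smooth one-dimensional scheme over an algebraically closed field, since $C_\eta = Y_\eta/G$ is smooth by the freeness of the action on the smooth $Y_\eta$. Choosing a $\St_x$-stable finite étale trivialising cover, the resulting Čech complex, truncated using Artin's bound on the cohomological dimension of curves, yields a two-term perfect representative in finite free $\Lambda[\St_x]$-modules, giving both perfection and the stated amplitude. This adapts the strategy of \cite[3.2.15]{Ramero} to the $\Lambda[\St_x]$-equivariant setting.

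For (ii), I would start from a projective representative $R\Psi_{Y/S}(\bF_{\ell, Y_\eta})_x \simeq [P^0 \xrightarrow{d} P^1]$ provided by (i), so that $[R\Psi(\bF_\ell)_x[1]] = [P^1] - [P^0]$ in $P_{\bF_\ell}(\St_x)$. The content of (ii) is that this virtual class lies in the submonoid of genuine module classes, i.e.\ that stably $P^1 \simeq P^0 \oplus Q$ for some finitely generated projective $\bF_\ell[\St_x]$-module $Q$. My plan is to produce such a splitting geometrically: build $[P^0 \to P^1]$ from a $\St_x$-invariant finite affine étale covering $\{U_i\}$ of $C_{(o)} \times_S \overline{\eta}$ trivialising $\phi_{\eta\ast}\bF_\ell$, with $P^0 = \bigoplus_i \Gamma(U_i, \phi_{\eta\ast}\bF_\ell)$ and $P^1 = \bigoplus_{i < j} \Gamma(U_i \cap U_j, \phi_{\eta\ast}\bF_\ell)$. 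Connectedness of the Milnor tube (via \cite[18.9.8]{EGA.IV}, as used in the proof of \ref{Nearby Cycles Lutke}) supplies a canonical $\St_x$-equivariant section of $H^0 = \bF_\ell \hookrightarrow P^0$, and threading this section through the complex should split $P^1 \simeq \mathrm{im}(d) \oplus Q$ with $Q$ a projective $\bF_\ell[\St_x]$-module representing $[R\Psi(\bF_\ell)_x[1]]$.

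The main obstacle is the modular case $\ell \mid |\St_x|$: there $\bF_\ell[\St_x]$ is non-semisimple and the trivial representation $\bF_\ell = H^0(R\Psi)_x$ is not projective, so the positivity required in $P_{\bF_\ell}(\St_x)$ is a genuinely nontrivial assertion that no abstract cancellation of virtual classes can provide. The splitting must be produced geometrically and $\St_x$-equivariantly, which is where the bulk of the work lies. An alternative route would be to invoke Verdier--Poincaré self-duality for the complex $R\Psi_{Y/S}(\bF_\ell)_x$ on the smooth open curve $Y_{(x)} \times_S \overline{\eta}$: this pairs $H^0$ with $H^1$ up to a Tate twist and forces their non-projective summands to cancel in $P_{\bF_\ell}(\St_x)$, thereby identifying $[R\Psi(\bF_\ell)_x[1]]$ with the class of a single projective $\bF_\ell[\St_x]$-module.
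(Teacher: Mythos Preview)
Your treatment of (i) is in the right spirit but sketchier than the paper's. The paper does not build an explicit \v{C}ech model; instead, after reducing to $\St_x=G$, it observes that $\phi_{\eta\ast}\Lambda$ is a \emph{flat} sheaf of $\Lambda[G]$-modules (stalks $\cong\Lambda[G]$), that $R\Gamma$ on the one-dimensional Milnor fibre has cohomological dimension $\leq 1$, and then applies \cite[XVII, 5.2.11]{SGA4} to conclude that the complex has Tor-amplitude in $[0,1]$. Perfection follows from the equivalence ``perfect $\Leftrightarrow$ pseudo-coherent of finite Tor-dimension'' \cite[VIII, 8.1]{SGA5}, pseudo-coherence being automatic since the $R^i\Psi$ are finite over the Noetherian ring $\Lambda[G]$. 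Your \v{C}ech construction would need more care: a two-term \v{C}ech complex for an \'etale cover does not in general compute \'etale cohomology of a locally constant sheaf unless the cover members and their intersections are acyclic for that sheaf, which is not arranged.

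For (ii) there is a genuine gap, and the missing ingredient is precisely Lemma~\ref{Euler-Poincaré-Negatif}. Your splitting argument breaks in the modular case you yourself flag: if $\ell\mid\lvert\St_x\rvert$ then $\bF_\ell=H^0$ is not projective, so the section $\bF_\ell\hookrightarrow P^0$ does not split, $\mathrm{im}(d)\cong P^0/\bF_\ell$ need not be projective, and there is no reason for it to be a direct summand of $P^1$. The duality alternative is not a proof either: Poincar\'e duality on the open Milnor fibre pairs $H^i$ with $H^{2-i}_c$, not $H^{2-i}$, and the phrase ``non-projective summands cancel in $P_{\bF_\ell}(\St_x)$'' has no meaning since $H^0,H^1$ live in $R_{\bF_\ell}(\St_x)$, not $P_{\bF_\ell}(\St_x)$.

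The paper's argument for (ii) is entirely different. Writing $P^0=\bigoplus_N\alpha_N P_N$ and $P^1=\bigoplus_N\beta_N P_N$ in indecomposable projectives (projective covers of simples $N$), the claim $[P^1]-[P^0]\geq 0$ becomes $\alpha_N\leq\beta_N$ for every $N$. Using that $\Hom_{\bF_\ell[G]}(P_N,N')=0$ for $N\not\cong N'$ and a short duality manipulation, this is reduced to
\[
\rk_{\bF_\ell}\bigl(P^0\otimes_{\bF_\ell[G]}M\bigr)\ \leq\ \rk_{\bF_\ell}\bigl(P^1\otimes_{\bF_\ell[G]}M\bigr)
\]
for every finite-length $\bF_\ell[G]$-module $M$. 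The projection formula (again \cite[XVII, 5.2.11]{SGA4}) identifies $P^{\bullet}\otimes_{\bF_\ell[G]}M$ with $R\Gamma\bigl(C_{(o)}\times_S\overline{\eta},\,M\otimes_{\bF_\ell[G]}\phi_{\eta\ast}\bF_\ell\bigr)$, so the desired inequality is exactly
\[
\chi\bigl(R\Psi_{C/S}(M\otimes_{\bF_\ell[G]}\phi_{\eta\ast}\bF_\ell)_o\bigr)\ \leq\ 0,
\]
which is Lemma~\ref{Euler-Poincaré-Negatif} applied to the lisse sheaf $M\otimes_{\bF_\ell[G]}\phi_{\eta\ast}\bF_\ell$ (and that lemma in turn rests on Saito's non-vanishing \ref{Saito-Vanishing}). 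This is the step your proposal is missing.
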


\begin{proof}
Replacing $\St_x$ by $G$,  we may assume that $\St_x=G$; then, $\phi^{-1}(o)=x$.
The sheaf of $\Lambda[G]$-modules $\phi_{\eta\ast}\Lambda_{Y_{\eta}}$ on $C_{\eta}$ is flat.  Indeed,  since $G$ acts freely on $Y_{\eta}$,  the stalk of $\phi_{\eta\ast}\Lambda_{Y_{\eta}}$ at any geometric point $\overline{c}\to C_{\eta}$ is 
\begin{equation}
	\label{Projective-TorsionCoeffs 1}
(\phi_{\eta\ast}\Lambda_{C_{\eta}})_{\overline{c}}\cong \bigoplus_{\overline{y}\in \phi_{\eta}^{-1}(\overline{c})} \Lambda_{Y_{\eta}, \overline{y}}\cong \Lambda[G];
\end{equation}
whence we see that $\phi_{\eta\ast}\Lambda_{Y_{\eta}}$ is fiber-wise flat, hence flat, as a sheaf of $\Lambda[G]$-modules. 
As in the proof of \cite[I, 4.2]{SGA7}, being a projective limit of affine schemes of dimension $\leq 1$, $C_{(o)}\times_S\overline{\eta}$ has cohomological dimension $\leq 1$ by Artin's affine Lefschetz theorem \cite[XIV, 3.2]{SGA4}. Therefore, the functor $R\Gamma(C_{(o)}\times_S\overline{\eta}, \cdot)$ has cohomological dimension $\leq 1$ on both derived categories $D^+(C_{(o)}\times_S\overline{\eta}, \Lambda[G])$ and $D^+(C_{(o)}\times_S\overline{\eta}, Z(\Lambda[G]))$, where $Z(\Lambda[G])$ is the center of the Noetherian ring $\Lambda[G]$.  (Thus,  the functor extends to the corresponding unbounded derived categories).  As $\phi_{\eta\ast}\Lambda_{Y_{\eta}}$ is flat,  it follows from \cite[XVII, 5.2.11]{SGA4},  applied to the structure morphism $C_{(o)}\times_S\overline{\eta}\to \overline{\eta}$ that the complex of $\Lambda[G]$-modules $R\Gamma(C_{(o)}\times_S\overline{\eta}, \phi_{\eta\ast}\Lambda_{Y_{\eta}})$ has Tor-dimension $\leq 0$. Hence,  through \eqref{Stalk-of-Nearby-Cycles-Complexes 2},  the Tor-amplitude of $R\Psi_{Y/S}(\Lambda)_x$ is in $[0, 1]$.  From \cite[VIII, 8.1]{SGA5},  it follows also that $R\Psi_{Y/S}(\Lambda)_x$ is perfect, as a complex of $\Lambda[G]$-modules,  if and only if it is pseudo-coherent.  Now,  by \cite[Th. finitude, 3.2]{SGA4demi},  the $R^{i}\Psi_{Y/S}(\Lambda)_x$ are finite modules over $\Lambda$. Then, $\Lambda[G]$ being Noetherian,  $R\Psi_{Y/S}(\Lambda)_x$ is indeed pseudo-coherent by \cite[I, 3.5, 3.6]{SGA6},  thus perfect, which proves (i).\newline

\noindent Let $P^{\bullet}=(P^0\to P^1)$ be a complex of finitely generated and projective $\bF_{\ell}[G]$-modules such that we have a quasi-isomorphism 
\begin{equation}
	\label{Projective-TorsionCoeffs 2}
P^{\bullet}\xrightarrow{\sim} R\Psi_{Y/S}(\bF_{\ell})_x.
\end{equation}
By \cite[\S 14.3, Corollaires 1, 2]{Serre2}, $P^0$ and $P^1$ are direct sums of projective envelopes of simple $\bF_{\ell}[G]$-modules
\begin{equation}
	\label{Projective-TorsionCoeffs 3}
P^0=\bigoplus_N \alpha_N P_N, \quad P^1=\bigoplus_N \beta_N P_N.
\end{equation}
So, to prove (ii), it is enough to show that for any simple $\bF_{\ell}[G]$-module $N$, we have $\alpha_N\leq \beta_N$.
For any such $N$ and $N'$, $\Hom_{\bF_{\ell}[G]}(P_N, N')=0$ if $N\neq N'$: if $f: P_N\to N'$ is nonzero, then it is surjective and the image of $\ker f\subsetneq P_N$ through the canonical essential map $P_N\to N$ is $0$; this induces a nonzero map $N'\cong P_N/\ker f \to N$, which is an isomorphism.
Therefore, it is enough to show that, for any $\bF_{\ell}[G]$-module $M$ of finite length, we have
\begin{equation}
	\label{Projective-TorsionCoeffs 4}
\rk_{\bF_{\ell}}(\Hom_{\bF_{\ell}[G]}(P^0, M))\leq \rk_{\bF_{\ell}}(\Hom_{\bF_{\ell}[G]}(P^1, M)).
\end{equation}
The dual $M^{\vee}=\Hom_{\bF_{\ell}}(M, \bF_{\ell})$ of a $\bF_{\ell}[G]$-module $M$ is also an $\bF_{\ell}[G]$-module, and if $P$ is a finitely generated and projective $\bF_{\ell}[G]$-module, then the map
\begin{equation}
	\label{Projective-TorsionCoeffs 5}
P^{\vee}\otimes_{\bF_{\ell}[G]} M^{\vee} \to (P\otimes_{\bF_{\ell}[G]} M)^{\vee}, \varphi\otimes\psi \mapsto (a\otimes b\mapsto \varphi(a)\psi(b)) 
\end{equation}
is a $\bF_{\ell}$-isomorphism. Combined with \cite[X, 3.8]{SGA5}, this yields, for $i=0, 1$, $\bF_{\ell}$-isomorphisms
\begin{equation}
	\label{Projective-TorsionCoeffs 6}
\Hom_{\bF_{\ell}[G]}(P^i, M^{\vee})\cong (P^{i})^{\vee}\otimes_{\bF_{\ell}[G]} M^{\vee} \cong (P^{i}\otimes_{\bF_{\ell}[G]} M)^{\vee}.
\end{equation} 
Since a finitely generated $\bF_{\ell}[G]$-module and its dual have the same rank over $\bF_{\ell}$, it follows from \eqref{Projective-TorsionCoeffs 4} and \eqref{Projective-TorsionCoeffs 6} that it is enough to show that
\begin{equation}
	\label{Projective-TorsionCoeffs 7}
\rk_{\bF_{\ell}}(P^0\otimes_{\bF_{\ell}[G]} M) \leq \rk_{\bF_{\ell}}(P^1\otimes_{\bF_{\ell}[G]} M),
\end{equation}
for any $\bF_{\ell}[G]$-module $M$ of finite length.
The proof of \cite[XVII, 5.2.11]{SGA4} also establishes that the base change map
\begin{equation}
	\label{Projective-TorsionCoeffs 8}
M\otimes_{\bF_{\ell}[G]}^{\bL} R\Gamma(C_{(o)}\times_S\overline{\eta}, \phi_{\eta\ast}\bF_{\ell, Y_{\eta}}) \to R\Gamma(C_{(o)}\times_S\overline{\eta}, M\otimes_{\bF_{\ell}[G]}\phi_{\eta\ast}\bF_{\ell, Y_{\eta}})
\end{equation}
is an isomorphism.
Therefore, with \eqref{Stalk-of-Nearby-Cycles-Complexes 2} and \eqref{Projective-TorsionCoeffs 2}, we are reduced to proving that 
\begin{equation}
	\label{Projective-TorsionCoeffs 9}
\chi(C_{(o)}\times_S\overline{\eta}, M\otimes_{\bF_{\ell}[G]}\phi_{\eta\ast}\bF_{\ell, Y_{\eta}})\leq 0.
\end{equation}
Since $G$ acts freely on $Y_{\eta}$, the quotient morphism $\phi_{\eta}$ is (finite) étale; thus $\phi_{\eta\ast}\bF_{\ell, Y_{\eta}}$ is locally constant \cite[XVI, 2.2]{SGA4}, and \ref{Euler-Poincaré-Negatif} applies, which yields \eqref{Projective-TorsionCoeffs 9} and finishes the proof.
\end{proof}

\subsection{} \label{Z_ell-Coeffs}
We keep the notation and assumptions of \ref{Stalk-of-Nearby-Cycles-Complexes}.
As mentioned in the proof of \ref{Projective-TorsionCoeffs},  for any $n\geq 1$,  derived global section functor $R\Gamma$ on the $D^+(C_{(o)}\times_S\overline{\eta}, \bZ/\ell^n\bZ[\St_x])$ has finite cohomological dimension and thus,  as established in the proof of \cite[XVII, 5.2.11]{SGA4},  it satisfies the projection formula.  In particular,  we have an isomorphism \eqref{Stalk-of-Nearby-Cycles-Complexes 2}
\begin{equation}
	\label{Z_ell-Coeffs 1}
\bZ/\ell^n\bZ[\St_x]\otimes_{\bZ/\ell^{n+1}\bZ[\St_x]}^{\bL} R\Psi_{Y/S}(\bZ/\ell^{n+1}\bZ)_x \xrightarrow{\sim} R\Psi_{Y/S}(\bZ/\ell^n\bZ)_x \quad {\rm in}~ D(\bZ/\ell^n\bZ[\St_x]).
\end{equation}
Then,  by induction,  \cite[XIV,  \S 3,  n$^{\circ}3$,  Lemme 1]{SGA5} implies the existence of a projective system $(P^{\bullet}_n)_n$, where each $P^{\bullet}_n$ is a complex of finitely generated projective $\bZ/\ell^n\bZ[\St_x]$-modules concentrated in degrees $0$ and $1$,  the transition maps are $\bZ/\ell^n\bZ[\St_x]$-isomorphisms
\begin{equation}
	\label{Z_ell-Coeffs 2}
\bZ/\ell^n\bZ[\St_x]\otimes_{\bZ/\ell^{n+1}\bZ[\St_x]} P_{n+1}^{\bullet}\xrightarrow{\sim} \bZ/\ell^n\bZ\otimes_{\bZ/\ell^{n+1}\bZ} P^{\bullet}_{n+1} \xrightarrow{\sim} P^{\bullet}_n,
\end{equation}
and $P_n^{\bullet}$ is isomorphic to $R\Psi_{Y/S}(\bZ/\ell^n\bZ)_x$ in $D^b(\bZ/\ell^n\bZ[\St_x])$.\newline
(We note that the commutativity assumption in \textit{loc. cit.}, which is not satisfied by the family of rings $(\bZ/\ell^n\bZ[\St_x])_n$, is not necessary in our setting.  Indeed,  in \textit{loc. cit.},  this assumption intervenes only,  through the reference to \cite[III, \S 4, n$^{\circ} 6$, Lemme 2]{Bourbaki1}, in the proof of the intermediate result \cite[XIV,  \S 3,  n$^{\circ}3$,  Lemme 3 (ii)]{SGA5}; here,  the latter result is implied by \cite[14.4,  Proposition 42]{Serre2} since each $R\Psi_{Y/S}(\bZ/\ell^n\bZ)_x$, being perfect,  is already isomorphic to a complex of finitely generated projective $\bZ/\ell^n\bZ[\St_x]$-modules in $D^b(\bZ/\ell^n\bZ[\St_x])$.)\newline

\noindent We deduce a complex $P^{\bullet}=\varprojlim_n P^{\bullet}_n$ in  $K_{\rm Perf}(\bZ_\ell [\St_x])$.  Any other projective system with the same properties as $(P^{\bullet}_n)_n$ yields a complex quasi-isomorphic to $P^{\bullet}$ in  $K_{\rm Perf}(\bZ_\ell [\St_x])$. 
Therefore,  we have a well-defined element
\begin{equation}
	\label{Z_ell-Coeffs 3}
R\Psi_{Y/S}(\bZ_\ell)_x =\varprojlim_n P^{\bullet}_n \quad {\rm in} ~ D^b_{\rm Perf}(\bZ_\ell [\St_x]).
\end{equation}

\begin{rem}
	\label{Definition-Stalk-Nearby-Cycles}	
Through the forgetful functor $D^b_{\rm Perf}(\bZ_\ell [\St_x])\to D^b_{\rm Perf}(\bZ_\ell)$,  \eqref{Z_ell-Coeffs 3} defines also an $\ell$-adic nearby cycles stalk at $x$
\begin{equation}
	\label{Definition-Stalk-Nearby-Cycles 1}
R\Psi_{Y/S}(\bZ_\ell)_x ~ \in ~ D^b_{\rm Perf}(\bZ_\ell).
\end{equation}
Alternatively,  let us note that,  forgetting the action of $\St_x$,  the arguments given in the proof of \ref{Projective-TorsionCoeffs} (i) also yield that each $R\Psi_{Y/S}(\bZ/\ell^n\bZ)_x$ lies in $D^b_{\rm Perf}(\bZ/\ell^n\bZ)$ with amplitude in $[0,  1]$,  and satisfies a projection formula analogous to \eqref{Z_ell-Coeffs 1}.  (We can also deduce this from \ref{Projective-TorsionCoeffs} (i) as $\bZ/\ell^n\bZ[\St_x]$ is a free $\bZ/\ell^n\bZ$-modules of finite rank).  As in \ref{Z_ell-Coeffs},  we can then define $R\Psi_{Y/S}(\bZ_\ell)_x$ in $D^b_{\rm Perf}(\bZ_\ell)$ as the isomorphism class of the colimit of an inverse system $(P_n^{\bullet})_n$,  where $P_n^{\bullet}$ is a complex of finitely generated and projective $\bZ/\ell^n\bZ$-modules.  In fact,  we can take the later system to be the same as in \ref{Z_ell-Coeffs} where we forget the action of $\St_x$. 
\end{rem}

\begin{cor}
	\label{Projective-Z_l-Coeffs}
With the notation and assumptions of \ref{Z_ell-Coeffs},  the element $[R\Psi_{Y/S}(\bZ_{\ell})_x[1]]$ of $P_{\bZ_{\ell}}(\St_x)$ is the class of a finitely generated and projective $\bZ_{\ell}[\St_x]$-module.
\end{cor}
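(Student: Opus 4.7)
My plan is to reduce the $\bZ_{\ell}$-coefficient statement to the mod-$\ell$ result \ref{Projective-TorsionCoeffs}(ii) via Serre's isomorphism between the Grothendieck groups of finitely generated projective $\bZ_{\ell}[\St_x]$- and $\bF_{\ell}[\St_x]$-modules.

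First, I would take the inverse system $(P_n^{\bullet})_n$ furnished by \ref{Z_ell-Coeffs}, in which each $P_n^{\bullet}=(P_n^0\to P_n^1)$ is a two-term complex of finitely generated projective $\bZ/\ell^n\bZ[\St_x]$-modules whose transition maps induce isomorphisms as in \eqref{Z_ell-Coeffs 2}, and form the $\bZ_{\ell}$-modules $P^i:=\varprojlim_n P_n^i$ for $i=0,1$. The classical lifting theorem for projective modules over the $\ell$-adically complete group ring $\bZ_{\ell}[\St_x]$ (see e.g.\ \cite[\S 14.4, Proposition 42]{Serre2}) will guarantee that each $P^i$ is a finitely generated projective $\bZ_{\ell}[\St_x]$-module with $\bZ/\ell^n\bZ\otimes_{\bZ_{\ell}}P^i\cong P_n^i$. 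The individual differentials $P_n^0\to P_n^1$ then assemble into a differential $d\colon P^0\to P^1$, producing a two-term complex $P^{\bullet}$ of finitely generated projective $\bZ_{\ell}[\St_x]$-modules that represents $R\Psi_{Y/S}(\bZ_{\ell})_x$ in $D^b_{\rm Perf}(\bZ_{\ell}[\St_x])$, so that
\[
[R\Psi_{Y/S}(\bZ_{\ell})_x[1]]=[P^1]-[P^0]\quad\text{in}\ P_{\bZ_{\ell}}(\St_x).
\]

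Next, I would invoke Serre's theorem that reduction modulo $\ell$ induces an isomorphism of Grothendieck groups
\[
r\colon P_{\bZ_{\ell}}(\St_x)\xrightarrow{\ \sim\ }P_{\bF_{\ell}}(\St_x),
\]
under which $[P^i]\mapsto[P_1^i]$. Consequently $r([P^1]-[P^0])=[P_1^1]-[P_1^0]=[R\Psi_{Y/S}(\bF_{\ell})_x[1]]$, which by \ref{Projective-TorsionCoeffs}(ii) equals $[Q_1]$ for some finitely generated projective $\bF_{\ell}[\St_x]$-module $Q_1$. Lifting $Q_1$ to a finitely generated projective $\bZ_{\ell}[\St_x]$-module $Q$ (again via the lifting theorem) and using the injectivity of $r$, I will conclude that $[R\Psi_{Y/S}(\bZ_{\ell})_x[1]]=[Q]$ in $P_{\bZ_{\ell}}(\St_x)$, as desired.

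The step demanding the most care will be the identification of $\varprojlim_n P_n^{\bullet}$ with a complex of honest finitely generated projective $\bZ_{\ell}[\St_x]$-modules representing $R\Psi_{Y/S}(\bZ_{\ell})_x$. This rests on two ingredients: the Mittag--Leffler property of the system (its transition maps $P_{n+1}^i\twoheadrightarrow P_n^i$ being surjective by \eqref{Z_ell-Coeffs 2}), which ensures that $\varprojlim_n$ is exact on our complex and recovers the isomorphism class of \eqref{Z_ell-Coeffs 3}; and the lifting of compatible projective systems over $\ell$-adically complete rings, a standard Nakayama-type argument. Once these are in place, the remainder is the purely formal application of Serre's Grothendieck group isomorphism to transfer the mod-$\ell$ projectivity of \ref{Projective-TorsionCoeffs}(ii) to $\bZ_{\ell}$-coefficients.
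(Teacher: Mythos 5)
Your proposal is correct and follows essentially the same route as the paper: both reduce the statement to \ref{Projective-TorsionCoeffs}(ii) via Serre's identification of $P_{\bZ_{\ell}}(\St_x)$ with $P_{\bF_{\ell}}(\St_x)$ by reduction modulo $\ell$. The extra steps you spell out (forming $\varprojlim_n P_n^{\bullet}$ and lifting projectives) are just an unwinding of the definition \eqref{Z_ell-Coeffs 3} and of Serre's isomorphism, which the paper invokes directly.
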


\begin{proof}
This follows from \ref{Projective-TorsionCoeffs} and \cite[\S 14.4, Corollaire 3.3]{Serre2}, which, for a finite group $H$, identifies $P_{\bZ_{\ell}}(H)$ with $P_{\bF_{\ell}}(H)$ through reduction $\mod \ell$.
\end{proof}

\subsection{}\label{l-adic NearbyCycles}
We keep the notation and assumptions of \ref{Z_ell-Coeffs}. The stalk at $x$ of the $\ell$-adic nearby cycles complex $R\Psi_{Y/S}(\bQ_{\ell})$ is defined as
\begin{equation}
	\label{l-adic NearbyCycles 1}
R\Psi_{Y/S}(\bQ_{\ell})_x=R\Psi_{Y/S}(\bZ_{\ell})_x\otimes_{\bZ_{\ell}}\bQ_{\ell}.
\end{equation}
It is (a quasi-isomorphism class of) a complex of finitely generated and projective $\bQ_{\ell}[\St_x]$-modules \eqref{Projective-TorsionCoeffs},  and thus defines an element in $P_{\bQ_{\ell}}(\St_x)$,  which is in fact the class of a finitely generated and projective $\bQ_{\ell}[\St_x]$-module \eqref{Projective-Z_l-Coeffs}.  We have the following rationality result.

\begin{thm}
	\label{Rationality-of-NearbyCycles}
The class $[R\Psi_{Y/S}(\bQ_{\ell})_x] \in P_{\bQ_{\ell}}(\St_x)$ lies in $\overline{R}_{\bQ}(\St_x)$.
\end{thm}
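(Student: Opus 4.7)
My plan is to apply Serre's rationality criterion \cite[\S 13.1, Théorème 30]{Serre2}, invoked by the authors immediately after the theorem statement, and in fact to establish the stronger claim that the character of $[R\Psi_{Y/S}(\bQ_\ell)_x]$ on $\St_x$ is integer-valued, vanishing at every nontrivial element. The crucial ingredient will be the freeness of the induced $\St_x$-action on the Milnor tube at $x$, inherited from the freeness of the $G$-action on $Y_\eta$ assumed in the theorem's hypothesis.

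\textbf{Reduction to the Milnor tube.} Combining the isomorphism \eqref{Stalk-of-Nearby-Cycles-Complexes 2} with the projection formula (as in the proof of Proposition \ref{Projective-TorsionCoeffs}), the stalk $R\Psi_{Y/S}(\bQ_\ell)_x$ identifies in $D^b_{\rm Perf}(\bQ_\ell[\St_x])$ with $R\Gamma(X, \bQ_\ell)$, where $X := Y_{(x)} \times_S \overline{\eta}$, and $\St_x$ acts via its geometric action on the Milnor tube $X$. Since the geometric points of $X$ lie in $Y_\eta$, on which $G$ acts freely, any fixed geometric point of a nontrivial $g \in \St_x$ on $X$ would yield a fixed point on $Y_\eta$ — a contradiction. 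Hence $\St_x$ acts freely on $X$.

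\textbf{Vanishing of the nontrivial character values.} For $g = 1$, the trace is simply $\chi(R\Gamma(X, \bQ_\ell)) \in \bZ$. For $g \neq 1$, the smoothness of $X$ (a consequence of the smoothness of $Y - \{x\}$ over $S$) together with Poincaré duality gives $\operatorname{tr}(g \mid R\Gamma(X, \bQ_\ell)) = \operatorname{tr}(g^{-1} \mid R\Gamma_c(X, \bQ_\ell))$. The Lefschetz trace formula applied to the free action of $g^{-1}$ on the smooth curve $X$ then yields $\operatorname{tr}(g^{-1} \mid R\Gamma_c(X, \bQ_\ell)) = \chi_c(X^{g^{-1}}) = \chi_c(\emptyset) = 0$. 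The resulting integer-valued character shows $[R\Psi_{Y/S}(\bQ_\ell)_x] \in \overline{R}_\bQ(\St_x)$.

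\textbf{Main obstacle.} The technical delicacy lies in the rigorous application of both Poincaré duality and the Lefschetz trace formula to $X$, which is not a scheme of finite type over $\overline{K}$. One way to proceed is to choose a $\St_x$-equivariant Nagata compactification of an affine henselian model of $Y$ around $x$, apply the classical smooth-proper Lefschetz formula to the resulting curve, and use the excision long exact sequence — together with vanishing at the boundary — to isolate the trace on $R\Gamma_c(X, \bQ_\ell)$. An alternative, likely closer to the template of \cite{SGA4, SGA5, SGA7} as adapted in \cite{Ramero}, is to invoke duality and trace formulas for rigid analytic spaces (after Fargues, Huber, Mieda) applied directly to the rigid-analytic avatar of $X$.
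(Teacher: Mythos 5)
Your strategy proves too much, and the stronger statement you aim for is false. In the paper's application (Theorem \ref{Realization-NearbyCycles-Swan}, via Lemma \ref{Realization-NearbyCycles-Swan-Regular-Pairing}), the class $[\bigoplus_j R\Psi_{Y/S}(\bQ_\ell)_{\ox_j}]$ equals Kato's Swan character $\widetilde{\sw}_f^{\beta}+\widetilde{\sw}_f^{\prime\beta}$. If each stalk class had character vanishing at every nontrivial element of its stabilizer, as you claim, this sum would be a multiple of the regular character of $G$, and then by \ref{Abbes-Saito-Kato} the slope variation $\varphi_s(\cF,t)-\varphi_s(\cF,t')$ would be proportional to $\dim\cF$ for every sheaf $\cF$, hence identically zero (test against the trivial sheaf) --- contradicting, e.g., any $\bZ/p\bZ$-covering of an annulus that is wildly ramified along the special fiber, where the relevant Swan character has the shape $m(r_G-1_G)$ and the Swan conductor genuinely varies. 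So $\mathrm{tr}\bigl(g \mid R\Psi_{Y/S}(\bQ_\ell)_x\bigr)$ is in general nonzero for nontrivial wild $g\in\St_x$, even though, as you correctly observe, $\St_x$ acts freely on the geometric points of the Milnor tube.

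The error sits exactly in the step you defer as a ``technical delicacy'': applying a Lefschetz fixed-point formula to $Y_{(x)}\times_S\overline{\eta}$. This is not a scheme of finite type over $\overline{K}$ (so Deligne--Lusztig does not apply, and $g$ does not act on any finite-type étale neighborhood in the pro-system defining the tube, only on the limit, so one cannot apply a finite-level formula and pass to the limit); its rigid avatar is an open curve whose boundary annuli correspond to the branches in $P_s(\cO_{Y,x}^{\rm sh})$, and for such non-proper spaces the naive identity $\mathrm{tr}=\chi_c(\mathrm{Fix})$ fails precisely when there is wild ramification along the boundary, which produces Swan-type correction terms --- this is the phenomenon quantified by Kato's $\sw^{\beta}$ and by the Huber/Ramero theory the paper cites, so no boundary-term-free trace formula can rescue the argument. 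The paper's proof avoids this entirely: it passes to a semi-stable model $W\to Y$, where $R\Psi_{W/S}$ is explicitly (and tamely) described, writes the stalk class as $[R\Gamma(E,\bQ_\ell)]$ minus skyscraper terms with Tate twists, and applies the Lefschetz formula only to the smooth \emph{proper} components of the exceptional divisor, with explicit trace bookkeeping ($\pm1$, $2$ or $0$) at the double points; the wild ramification is then encoded in the genera and configuration of $E$ rather than in local fixed-point terms, and only rationality --- not vanishing off the identity --- comes out, which is all that is true.
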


\begin{proof}
As in the proof of \ref{Saito-Inequality}, we can assume that we have a proper $S$-morphism $f: W\to Y$ such that $W$ is regular and semi-stable, $E=f^{-1}(x)$ is a strict normal crossing divisor of $W$ and $f$ induces an isomorphism $W-E \xrightarrow{\sim} Y-\{x\}$. We can moreover assume that $G$ acts on $W$ as a group of $S$-automorphisms such that $f$ is $G$-equivariant.
The proper base change isomorphism \cite[XIII, 2.1.7.1]{SGA7}
\begin{equation}
	\label{Rationality-of-NearbyCycles 1}
R\Psi_{Y/S}(Rf_{\eta \ast}\bQ_{\ell})\xrightarrow{\sim} Rf_{s \ast}(R\Psi_{W/S}(\bQ_{\ell}))
\end{equation}
is $G$-equivariant. By the proper base change theorem again, its stalk at $x$ yields an $\St_x$-equivariant isomorphism
\begin{equation}
	\label{Rationality-of-NearbyCycles 2}
R\Psi_{Y/S}(Rf_{\eta \ast}\bQ_{\ell})_x\xrightarrow{\sim} R\Gamma(E, R\Psi_{W/S}(\bQ_{\ell})\lvert E).
\end{equation}
As $W_{\eta}\xrightarrow{\sim} Y_{\eta}$, we have
\begin{equation}
	\label{Rationality-of-NearbyCycles 3}
Rf_{\eta \ast}\bQ_{\ell}\simeq \bQ_{\ell\lvert Y_{\eta}}.
\end{equation}
Denoting by $i: {\rm Sing}(W_s)\hookrightarrow W_s$ the canonical closed immersion, we have \cite[I, 3.3, 4.2]{SGA7}
\begin{equation}
	\label{Rationality-of-NearbyCycles 4}
R^0\Psi_{W/S}\bQ_{\ell}\simeq \bQ_{\ell\lvert W_s}, ~ R^1\Psi_{W/S}\bQ_{\ell}\simeq i_{\ast}\bQ_{\ell}(-1)_{\lvert {\rm Sing}(W_s)} ~ {\rm and}~ R^q\Psi_{W/S}\bQ_{\ell}=0 \quad {\rm for} ~ q > 1,
\end{equation}
where $(-1)$ indicates the Tate twist.
Note that, although the isomorphisms \eqref{Rationality-of-NearbyCycles 1} to \eqref{Rationality-of-NearbyCycles 4} are proved for torsion coefficients in \cite{SGA7}, they remain valid for $\bQ_{\ell}$. Indeed, since the derived direct image of a proper morphism to a quasi-compact scheme has finite cohomological dimension \cite[XVII, 5.2.8.1]{SGA4}, all the functors involved in the constructions of these isomorphisms, for torsion coefficients, have finite cohomological dimension; hence, the latter isomorphisms satisfy the projection formula for $\bZ/\ell^{n+1}\bZ\to \bZ/\ell^n\bZ$ \cite[6.5.7]{Fu} and thus pass to the limit.
Combining them yields
\begin{equation}
	\label{Rationality-of-NearbyCycles 5}
[R\Psi_{Y/S}(\bQ_{\ell})_x]= [R\Gamma(E, \bQ_{\ell})] - [\Gamma({\rm Sing}(E), \bQ_{\ell}(-1))].
\end{equation}
We note that the actions of $\St_x$ on the terms of the RHS of \eqref{Rationality-of-NearbyCycles 5} are functorially induced by the actions of $\St_x$ on $\bQ_{\ell\lvert W_s}$ and $i_{\ast}\bQ_{\ell}(-1)_{\lvert {\rm Sing}(Ws)}$, and the latter are defined, via \eqref{Rationality-of-NearbyCycles 4}, by the action on $R\Psi_{W/S}(\bQ_{\ell})$. In particular, as the first isomorphism in \eqref{Rationality-of-NearbyCycles 4} stems from the canonical $\St_x$-equivariant map $\bQ_{\ell\lvert W_s} \to R\Psi_{W/S}(\bQ_{\ell\lvert W_s})$, the induced action on $\bQ_{\ell\lvert W_s}$ is the trivial one deduced from the initial trivial action on $\bQ_{\ell}$.

We are now reduced to showing that the two terms on the RHS of \eqref{Rationality-of-NearbyCycles 5} lie  $\overline{R}_{\bQ}(\St_x)$.

Let $\pi: \widetilde{E}\to E$ be the normalization map of $E$. By dévissage, we have 
\begin{equation}
	\label{Rationality-of-NearbyCycles 6}
[R\Gamma(E, \bQ_{\ell})]= [R\Gamma(\widetilde{E}, \bQ_{\ell})] - [\Gamma(\pi^{-1}({\rm Sing}(E)), \bQ_{\ell})] + [\Gamma({\rm Sing}(E), \bQ_{\ell})].
\end{equation}
The action of $\St_x$ on each term of the RHS of \eqref{Rationality-of-NearbyCycles 6} is functorially induced by the the trivial action on $\bQ_{\ell}$. Let $\widetilde{E}_1, \ldots, \widetilde{E}_r$ be the irreducible components of $\widetilde{E}$. Then, we have a decomposition 
\begin{equation}
	\label{Rationality-of-NearbyCycles 7}
R\Gamma(\widetilde{E}, \bQ_{\ell})\simeq\bigoplus_{i=1}^r R\Gamma(\widetilde{E}_i, \bQ_{\ell})
\end{equation}
For $g\in \St_x$, we denote by $\widetilde{g}: \widetilde{E}\xrightarrow{\sim}\widetilde{E}$, $\widetilde{g}_{\bQ_{\ell}}: \widetilde{g}^{\ast}\bQ_{\ell}\xrightarrow{\sim}\bQ_{\ell}$ and 
\begin{equation}
	\label{Rationality-of-NearbyCycles 8}
R\Gamma(\widetilde{E}, \widetilde{g}_{\bQ_{\ell}}): R\Gamma(\widetilde{E}, \widetilde{g}^{\ast}\bQ_{\ell})\xrightarrow{\sim} R\Gamma(\widetilde{E}, \bQ_{\ell})
\end{equation}
the induced isomorphisms. Restricting \eqref{Rationality-of-NearbyCycles 8} with respect to \eqref{Rationality-of-NearbyCycles 7} yields isomorphisms
\begin{equation}
	\label{Rationality-of-NearbyCycles 9}
\varepsilon_i: R\Gamma(\widetilde{g}(\widetilde{E}_i), \bQ_{\ell}) \xrightarrow{\sim}  R\Gamma(\widetilde{E}_i, \bQ_{\ell}).
\end{equation}
It follows that the trace of $R\Gamma(\widetilde{E}, \widetilde{g}_{\bQ_{\ell}})$ is the sum of the traces of the $\varepsilon_i$ such that $\widetilde{g}(\widetilde{E}_i)=\widetilde{E}_i$. As such a proper smooth $k$-curve $\widetilde{E}_i$ is irreducible, the automorphism $\widetilde{g}: \widetilde{E}_i\xrightarrow{\sim}\widetilde{E}_i$ has only isolated fixed points (in finite number); thus, the Lefschetz fixed point formula \cite[Rapport, 5.3]{SGA4demi} applies, showing that the trace of $\varepsilon_i$ is an integer. Therefore, we have $R[\Gamma(\widetilde{E}, \bQ_{\ell})] \in \overline{R}_{\bQ}(\St_x)$.\newline
Similarly, for the trace of the automorphism of $\Gamma({\rm Sing}(E), \bQ_{\ell})$ induced by $g\in \St_x$, we can restrict to the subset ${\rm Sing}(E)^g$ of points fixed by $g$; then, for $z\in {\rm Sing}(E)^g$, the trace on $\Gamma(\{z\}, \bQ_{\ell})$ is $1$. Thus, we have
$[\Gamma({\rm Sing}(E), \bQ_{\ell})] \in \overline{R}_{\bQ}(\St_x)$.\newline
If $z\in {\rm sing}(E)$, then $\pi^{-1}(z)=\{\widetilde{z}_1, \widetilde{z}_2\}$ and $\Gamma(\pi^{-1}(z), \bQ_{\ell})=\bQ_{\ell}^2$. For $g\in \St_x$, $\widetilde{g}$ either fixes or permutes $\widetilde{z}_1$ and $\widetilde{z}_2$; hence, the trace of $g$ on $\Gamma(\pi^{-1}(z), \bQ_{\ell})$ is either $2$ or $0$. It follows that $[\Gamma(\pi^{-1}({\rm Sing}(E)), \bQ_{\ell})] \in \overline{R}_{\bQ}(\St_x)$.

For $[\Gamma({\rm Sing}(E), \bQ_{\ell}(-1))]$, as in the foregoing argument, if $g\in \St_x$, it is enough to compute the trace of $\Gamma({\rm Sing}(E)^g, g_{\bQ_{\ell}(-1)})$. In fact, since the action on $\Gamma({\rm Sing}(E), \bQ_{\ell}(-1))$ is functorially deduced from the action on $i_{\ast}\bQ_{\ell}(-1)_{\lvert {\rm Sing}(Ws)}$ defined by \eqref{Rationality-of-NearbyCycles 4}, it is enough to show that $[R^1\Psi_{W/S}(\bQ_{\ell})_z] \in \overline{R}_{\bQ}(\St_x)$ for any $z\in {\rm Sing}(E)^g$.
By \cite[XV, 2.2.5, C]{SGA7}, with the notation therein, Poincaré duality for the smooth generic fiber $W_{\eta}\xrightarrow{\sim} Y_{\eta}$ induces a perfect pairing
\begin{equation}
	\label{Rationality-of-NearbyCycles 10}
R^1\Psi_{W/S}(\bQ_{\ell\lvert W_{\eta}})_z \times H^1_{\{z\}}(W_s, R\Psi_{\eta}\bQ_{\ell}(1)) \to \bQ_{\ell}.
\end{equation}
By \cite[XV, 2.2.7]{SGA7}, we have a short exact sequence
\begin{equation}
	\label{Rationality-of-NearbyCycles 11}
0 \to H^0(\{z\}, \bQ_{\ell})(1) \to H^0(\pi^{-1}(z), \bQ_{\ell})(1) \to H^1_{\{z\}}(W_s, R\Psi_{\eta}\bQ_{\ell}(1)) \to 0.
\end{equation}
Endowing $\bQ_{\ell\lvert \{z\}}$ and $\bQ_{\ell\lvert \pi^{-1}(z)}$ with the trivial action of $\St_x$, \eqref{Rationality-of-NearbyCycles 11} is $\St_x$-equivariant for the functorially induced actions on the $H^0$'s. It follows that, for $g\in \St_x$, the trace induced on $ H^1_{\{z\}}(W_s, R\Psi_{\eta}\bQ_{\ell}(1))$ is $2-1=1$ if $\widetilde{g}$ fixes the two elements of $\pi^{-1}(z)$ and $0-1=-1$ otherwise. Therefore, we have $[\Gamma({\rm Sing}(E), \bQ_{\ell}(-1))] \in \overline{R}_{\bQ}(\St_x)$, which finishes the proof.
\end{proof}

\section{Variation of conductors of a morphism to a rigid annulus}\label{Variation-Conductors}

\subsection{} \label{Hypotheses}
Let $0\leq r < r'$ be rational numbers and put $C=A(r', r)$ and $C^{\circ}=A^{\circ}(r', r)$.
Let $X$ be a smooth and connected $K$-affinoid space and $f: X\to C$ a finite flat morphism which is generically étale \eqref{S_f & d_f}.
Let $G$ be a finite group with a right action on $X$ such that $f$ is invariant under $G$.
By \cite[6.3.3/3]{BGR}, $\cO(X)^G$ is a $K$-affinoid closed sub-algebra of $\cO(X)$ and $\cO(X)$ is finite over $\cO(X)^G$. 
We assume that $\cO(X)^G\cong \cO(C)$, i.e. $X/G=\Sp(\cO(X)^G)\cong C$.
In this section, we recall and extend the results of \cite[\S 7]{VCS} in the above setting.

\subsection{}\label{Group action}
We use the notation of \ref{S_f & d_f}.
Let $K'$ be a finite extension of $K$ which is admissible for $f$ \eqref{Models Formels}.
The right action of $G$ on $X$ induces a canonical action of $G$ on the normalized integral model $\fX_{K'}$ defined in \ref{Models Formels}. We deduce an action of $G$ on $S_f$ (resp $S'_f$) given as follows : for $g\in G$ and $\tau=(\ox_{\tau}, \q_{\tau}) \in S_f$ (resp. $S'_f$), we put $g\cdot\tau=(g\circ\overline{x}_{\tau}, g_{\#}^{-1}(\q_{\tau}))$, where $g_{\#}$ is the induced isomorphism
\begin{equation}
	\label{Group action 1}
\cO_{\fX_{K'}, g\circ \ox_{\tau}}\xrightarrow{\sim} \cO_{\fX_{K'}, \ox_{\tau}}.
\end{equation}

\begin{lem} \label{ActionTransitive-ExtenionsGaloisiennes-RevetementQuotient}
\begin{itemize}
\item[(1)] The above action of $G$ on the set $S_f$ $($resp. $S'_f)$ is transitive.
\item[(2)] For every $\tau\in S_f$ $($resp. $S'_f)$, the extension of fields of fractions $\bK_{\tau}^h/\bK^h$ $($resp. $\bK_{\tau}^{\prime h}/\bK^{\prime h})$ induced by $V^h_{\tau}/V^h$ $($resp. $V_{\tau}^{\prime h}/V^{\prime h})$ is finite and Galois of group $G_{\tau}$ $($resp. $G'_{\tau})$ isomorphic to the stabilizer of $\tau$ under the action of $G$ on $S_f$ $($resp. $S'_f)$.
\item[(3)] Let $H$ be a subgroup of $G$.
The quotient $X_H=X/H=\Sp(\cO(X)^H)$ is a smooth $K$-rigid space and the moprhism $f_H : X_H \to C$ induced by $f$ is finite, flat and generically étale.
The canonical maps $g_S: S_f\to S_{f_H}$ and $g'_S: S'_f\to S'_{f_H}$ induced by the quotient morphism $X\to Y$ are surjective and their fibers are the orbits of $H$ under the action of $G$ on $S_f$ and $S'_f$ respectively.
\end{itemize}
\end{lem}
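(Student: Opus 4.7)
The plan is to leverage the compatibility of the normalized integral model $\fX_{K'}$ with the finite group action of $G$, combined with standard facts about finite Galois extensions of normal henselian local rings. The three parts of the lemma all follow from a single underlying observation: because the sup-norm on $\cO(X_{K'})$ is $G$-invariant, every subring of invariants we care about arises as a unit ball, so all the finite-group-quotient constructions behave as expected.

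For (1), first observe that $G$-invariance of the sup-norm, combined with the hypothesis $\cO(X)^G\cong \cO(C)$, lifts to $\cO^{\circ}(X_{K'})^G\cong \cO^{\circ}(C_{K'})$; hence $\scrC_{K'}$ is the categorical quotient $\fX_{K'}/G$ in the category of formal schemes. The standard finite-quotient argument then shows that $G$ acts transitively on the set $\fX_{K'}(\overline{o})$ of geometric points above $o$, yielding transitivity of the first coordinate of an element of $S_f$ (resp. $S'_f$). Fixing $\ox_\tau\in \fX_{K'}(\overline{o})$, the stabilizer $\mathrm{Stab}_G(\ox_\tau)$ acts on $B_\tau=\cO_{\fX_{K'},\ox_\tau}$; the identification \cite[2.12]{VCS} of $\cO_{\scrC_{K'},o}\otimes_{\cO(\scrC_{K'})}\cO(\fX_{K'})$ with $\prod_{\ox\in \fX_{K'}(\overline{o})}\cO_{\fX_{K'},\ox}$ shows that $A=\cO_{\scrC_{K'},o}$ is the fixed ring of this action, so that $B_\tau/A$ is a finite Galois extension of normal henselian local rings. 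Transitivity of $\mathrm{Stab}_G(\ox_\tau)$ on the height-one primes of $B_\tau$ lying above a given $\p\in P_s(A)$ (resp. $\p'$) is then classical.

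For (2), the functoriality of the henselian $\bZ^2$-valuation ring construction recalled in \ref{Property (P)-C_K} turns the Galois action of $G_\tau:=\mathrm{Stab}_G(\tau)$ on $(B_\tau,\q_\tau)$ into a $G_\tau$-action on $V^h_\tau$ with fixed ring $V^h$, and therefore on $\bK^h_\tau/\bK^h$ with fixed field $\bK^h$. Since this extension is finite (it arises from the finite ring extension $B_\tau/A$ localized and henselized at height-one primes) and is generically Galois, the extension $\bK^h_\tau/\bK^h$ is finite Galois of group $G_\tau$; the same argument applies verbatim to $\bK^{\prime h}_\tau/\bK^{\prime h}$.

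For (3), the space $X_H=\Sp(\cO(X)^H)$ is a normal $K$-affinoid curve (invariants of a normal ring under a finite group action are normal, and finiteness of $X_H\to C$ preserves dimension one); in dimension one, normality of the local rings is equivalent to regularity, hence to smoothness of $X_H$. The morphism $f_H:X_H\to C$ is finite because $\cO(X)^H$ is sandwiched between the finite $\cO(C)$-algebras $\cO(X)^G\cong \cO(C)$ and $\cO(X)$; it is flat by miracle flatness applied to a finite morphism between smooth equidimensional rigid curves; and it is generically étale because $f$ is, and étaleness of $f$ over an open descends to $f_H$ by composition. For $g_S$ and $g'_S$, the $H$-invariance of the sup-norm yields the identification $\fX_{H,K'}\cong \fX_{K'}/H$ in the category of normal formal schemes, so the canonical map $\fX_{K'}\to \fX_{H,K'}$ is finite, surjective and $H$-invariant with fibers being $H$-orbits; surjectivity of $g_S$ (resp. $g'_S$) then follows by lifting geometric points and then applying lying-over for the height-one primes, and the description of the fibers as $H$-orbits follows by running the transitivity argument of (1) with $H$ in place of $G$.

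The main technical point to verify carefully is the identification $\fX_{H,K'}\cong \fX_{K'}/H$, or equivalently $\cO^{\circ}(X_{H,K'})\cong \cO^{\circ}(X_{K'})^H$. This uses that the sup-norm on $\cO(X_{K'})$ is $G$-invariant (and in particular $H$-invariant), so its restriction to $\cO(X_{K'})^H$ is precisely the sup-norm on $\cO(X_{H,K'})$; once this is in hand, all the orbit-stabilizer arguments for parts (1) and (3) reduce to their classical counterparts for finite Galois covers of normal schemes.
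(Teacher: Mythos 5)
There is one genuine gap, in your proof of the first assertion of (3): you deduce smoothness of $X_H$ from ``normal $\Rightarrow$ regular in dimension one $\Rightarrow$ smooth''. The last implication is false over an imperfect base field, and $K$ here is only assumed to be a complete discrete valuation field with algebraically closed residue field of characteristic $p>0$; the equal-characteristic case $K\cong k((\pi))$ is allowed and is not perfect, so a regular $K$-affinoid curve need not be smooth. The step can be repaired, but it needs a geometric argument rather than a pointwise regularity one: since $X$ is smooth, $X\hat\otimes_K\widehat{\oK}$ is normal; taking $H$-invariants commutes with the flat base change $K\to\widehat{\oK}$ and preserves normality, so $X_H$ is \emph{geometrically} normal, hence geometrically regular in dimension one, hence smooth over $K$. (The paper itself does not reprove this: it simply invokes \cite[7.11]{VCS}, whose proof is said to carry over verbatim from the disc to the annulus.)

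Apart from this, your route is a self-contained reconstruction of what the paper handles by citation: the paper gets (1) and (3) from \cite[7.3, 7.11]{VCS} and gets (2) from Lemma \ref{Generic-Étale}(ii), after noting that $\cO(X)^G\cong\cO(C)$ forces $\cO(\fX_{K'})^G\cong\cO(\scrC_{K'})$ because $\hf_{K'}$ is finite and $\scrC_{K'}$ is normal and integral; your sup-norm-invariance argument proves the same identification differently and is fine, as is the Bourbaki-style transitivity on geometric points and on height-one primes over the fixed ring. Two smaller points to tighten: in (2), the claim that $(V^h_{\tau})^{G_{\tau}}=V^h$, equivalently that $\Gal(\bK^h_{\tau}/\bK^h)$ is the stabilizer of the \emph{pair} $(\ox_{\tau},\q_{\tau})$ and not a smaller group, is exactly the decomposition-group statement (Galois group of the henselization equals the stabilizer of $\q_{\tau}$ in $\Gal(\bK_{\tau}/\bK)$, with the accompanying degree count), so it deserves more than an appeal to functoriality; and in (3) you should enlarge $K'$ so that it is admissible for $f_H$ before identifying $\cO^{\circ}(X_{H,K'})$ with $\cO^{\circ}(X_{K'})^H$ as the normalized integral model used to define $S_{f_H}$ — harmless, since $S_{f_H}$ is independent of a sufficiently large choice of $K'$.
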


\begin{proof}
The items (1) and (3) are respectively \cite[7.3]{VCS} and \cite[7.11]{VCS} extended from the target space $D$ to $C$. 
The proofs go through verbatim. Note that the $\ox$ in \textit{loc. cit.} corresponds to our geometric point $o$ \eqref{Annulus-II}. Item (2) follows from \ref{Generic-Étale} (see also \cite[7.4]{VCS}). Indeed, as the normalized integral model $\hf_{K'}: \fX_{K'}\to \scrC_{K'}$ of $f$ over $\O_{K'}$ \eqref{Models Formels} is finite and $\scrC_{K'}$ is normal and integral, the isomorphism $\cO(X)^G\cong \cO(C)$ implies that $\cO(\fX_{K'})^G\cong \cO(\scrC_{K'})$, and thus \ref{Generic-Étale}(ii) applies.
\end{proof}

\subsection{}\label{Characters}
As in \cite[7.5]{VCS}, for $\tau \in S_f$, the ramification theory of $\bZ^2$-valuation rings gives rise to a $\bQ$-valued class function $a_{G_{\tau}}^{\alpha}$ and a $\bZ$-valued class function $\sw_{G_{\tau}}^{\beta}$ on $G_{\tau}$ \cite[6.16]{VCS}. 
The functions $a_{G_{\tau}}^{\alpha}$ are conjugate to each other as $\tau$ varies in $S_f$; so are the functions $\sw_{G_{\tau}}^{\beta}$ for $\tau$ ranging in $S_f$. They induce on $G$ the class functions
\begin{equation}
	\label{Characters 1}
a_{f, K'}^{\alpha}=\Ind_{G_{\tau}}^G a_{G_{\tau}}^{\alpha} 
\quad {\rm and}\quad 
\sw_{f, K'}^{\beta}=\Ind_{G_{\tau}}^G \sw^{\beta}_{G_{\tau}}
\end{equation}
which are independent of the choice of $\tau\in S_f$. In the same way, for $\tau \in S'_f$, we also have functions $a_{G'_{\tau}}^{\alpha}$ and $\sw_{G'_{\tau}}^{\beta}$ on $G'_{\tau}$ inducing $a_{f, K'}^{\prime\alpha}$ and $\sw_{f, K'}^{\prime\beta}$ on $G$ respectively.

\begin{lem}
	\label{Independence of Base Change}
Let $L$ be a finite extension of $K'$. Then, we have
\begin{equation}
	\label{Independence of Base Change 1}
a_{f, L}^{\alpha}=a_{f, K'}^{\alpha}
\quad ({\rm resp.}\quad
a_{f, L}^{\prime\alpha}=a_{f, K'}^{\prime\alpha}),
\end{equation}
\begin{equation}
	\label{Independence of Base Change 2}
\sw_{f, L}^{\beta}=\sw_{f, K'}^{\beta}
\quad ({\rm resp.}\quad
\sw_{f, L}^{\prime\beta}=\sw_{f, K'}^{\prime\beta}).
\end{equation}
In particular, $a_{f, K'}^{\alpha}$ $($resp. $a_{f, K'}^{\prime\alpha})$ and $\sw_{f, K'}^{\beta}$ $($resp $\sw_{f, K'}^{\prime\beta})$ are independent of the choice of the extension $K'$ of $K$ which is admissible for $f$; we denote them by $a_f^{\alpha}$ $($resp. $a_f^{\prime\alpha})$ and $\sw_f^{\beta}$ $($resp $\sw_f^{\prime\beta})$.
\end{lem}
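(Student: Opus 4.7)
The plan is to reduce the equalities to the intrinsic nature of the henselian $\bZ^2$-valuation rings $V^h_{\tau}$ and the associated Kato class functions $a^{\alpha}_{G_{\tau}}$, $\sw^{\beta}_{G_{\tau}}$ of \cite[6.16]{VCS} under a finite base change $\cO_{K'}\to \cO_L$. Since the constructions for $S_f$ and $S'_f$ are completely symmetric, I only treat the unprimed case. Also, by transitivity, I may assume the extension $L/K'$ is the only base change.

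First, by \ref{Models Formels}, the formation of the normalized integral models $\scrC_{K'}$, $\fX_{K'}$ and $\hf_{K'}$ commutes with further finite extensions of $K'$, so that $\scrC_L$ (resp.\ $\fX_L$) is obtained from $\scrC_{K'}\times_{\scrS'}\scrS_L$ (resp.\ from $\fX_{K'}\times_{\scrS'}\scrS_L$) by the normalization procedure of \ref{Models Formels}. By \cite[4.22]{VCS} the set $S_f$ is already intrinsically independent of the admissible extension, and the canonical identification $S_{f,L}\xrightarrow{\sim} S_{f,K'}$ matches, for each $\tau$, the geometric points $\ox_\tau$ and the height one prime $\q_\tau$ above $\p$. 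Under this identification, and by functoriality of the quotient morphism, the stabilizer $G_\tau$ of \ref{ActionTransitive-ExtenionsGaloisiennes-RevetementQuotient}(2) is the same subgroup of $G$ whether computed over $K'$ or over $L$; in particular the ambient group on which the class functions $a^{\alpha}_{G_{\tau}}$ and $\sw^{\beta}_{G_{\tau}}$ live is unchanged.

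Second, I compare the $\bZ^2$-valuation rings themselves. Write $V^{h,K'}_\tau$ and $V^{h,L}_\tau$ for the henselian $\bZ^2$-valuation rings of \eqref{Property (P)-C_K} attached to the couples $(B^{K'}_\tau,\q_\tau)$ and $(B^{L}_\tau,\q_\tau)$. The fields of fractions of $B^{K'}_\tau$ and of $B^L_\tau$ coincide with $\bK_\tau$, independently of the coefficient ring, so the extensions $\bK_\tau^h/\bK^h$ produced for $K'$ and for $L$ are the same finite Galois extension, with Galois group $G_\tau$. The crucial normalization step is the one recalled in \ref{Annulus-II}: the map $v^\alpha:(\bK^h)^\times\to \bQ$ is defined as $v^\alpha_\p/e(K'/K)$, so replacing $K'$ by $L$ simply rescales $v^\alpha_\p$ by $e(L/K')$ while the denominator becomes $e(L/K)=e(L/K')\cdot e(K'/K)$; these rescalings cancel and $v^\alpha$ is unchanged as a $\bQ$-valued function on $(\bK^h)^\times$. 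The $\beta$-component is integral and equals the normalized valuation on the residue field of the first component, which is again intrinsic to the couple $(A,\p)$ (see \cite[3.7, 3.15]{VCS}) and insensitive to the base ring.

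Third, since the Kato ramification filtration of $G_\tau$ defined in \cite[\S 6]{VCS} depends only on $v^\alpha$ and $v^\beta$ together with the action of $G_\tau$ on $V^h_\tau$, and all of these data are identical for $K'$ and for $L$ by the previous paragraph, the class functions $a^\alpha_{G_\tau}$ and $\sw^\beta_{G_\tau}$ are identical. Applying $\Ind_{G_\tau}^G$ then yields \eqref{Independence of Base Change 1} and \eqref{Independence of Base Change 2}. The main obstacle is the careful bookkeeping of the rescaling factor $e(L/K')$ in the transition $V^{h,K'}_\tau \rightsquigarrow V^{h,L}_\tau$, i.e.\ checking that Kato's breaks and his refined Swan function of \cite[6.16]{VCS} depend only on the normalized pair $(v^\alpha,v^\beta)$ and not on any scalar renormalization inherited from the base field; everything else is a formal manipulation with induced characters.
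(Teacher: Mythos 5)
Your attempt tries to reprove the lemma directly, whereas the paper simply cites \cite[7.6]{VCS} and observes that the proof extends verbatim from the disc to the annulus; a direct argument is legitimate, but the central step of yours is wrong. Because the formation of the normalized integral models commutes with the extension $\cO_{K'}\to\cO_L$, one has $\fX_L\cong \fX_{K'}\times_{\scrS'}\Spf(\cO_L)$ and hence $B^L_\tau\cong B^{K'}_\tau\otimes_{\cO_{K'}}\cO_L$: the formal \'etale local rings, their fraction fields, and the henselian $\bZ^2$-valuation rings are \emph{not} the same over $K'$ and over $L$ --- the field $\bK^{h,L}_\tau$ contains $L$ and is a nontrivial finite extension of $\bK^{h,K'}_\tau$. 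So your assertion that ``the fields of fractions of $B^{K'}_\tau$ and of $B^L_\tau$ coincide'' and that consequently ``all of these data are identical'' is false, and it is precisely where the content of the lemma lies: Kato's class functions over $L$ are computed from generators and valuations inside the larger rings $V^{h,L}\subset V^{h,L}_\tau$, not merely from the restriction of $v^{\alpha}$, $v^{\beta}$ to the smaller field.

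What the argument actually requires (and what you only gesture at as ``bookkeeping'') is a comparison across the base change: one shows that $V^{h,L}_\tau$ is obtained from $V^{h,K'}_\tau$ by the base change $V^{h,K'}\to V^{h,L}$ (up to henselization), so that a generator $b$ of $V^{h,K'}_\tau$ over $V^{h,K'}$ remains a generator over $L$ and the elements $\sigma(b)-b$ are unchanged; then the integrally normalized $\alpha$-valuation of such elements is multiplied by $e(L/K')$, which is cancelled by the normalization $(a,b)\mapsto (a/e,b)$ of \ref{Annulus-II}, while the $\beta$-valuation is unchanged because the special fibers --- hence the residue fields $\kappa(\p)$, $\kappa(\q_\tau)$ and their normalized order functions --- do not change under the extension (the special fibers being geometrically reduced). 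Your remark about the cancellation of $e(L/K')$ identifies the right mechanism, and your first paragraph (invariance of $S_f$ and of the stabilizers $G_\tau$) is fine; but as written the proof replaces the required compatibility of the conductors under a ramified base change by an identification of objects that are genuinely different, so the key step is missing.
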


\begin{proof}
This is \cite[7.6]{VCS} extended from $D$ to $C$. The proof goes through verbatim.
\end{proof}

\subsection{}\label{Normalized Conductors}
We keep the notation of \ref{Group action}.
We modify $a_f^{\alpha}$ and $\sw_f^{\beta}$ in the following way. 
We put
\begin{equation}
	\label{Normalized Conductors 1}
\widetilde{a}_f^{\alpha}=\frac{\lvert G\lvert}{\lvert S_f\lvert} a_f^{\alpha}
\quad {\rm and} \quad
\widetilde{\sw}_f^{\beta}=\frac{\lvert G\lvert}{\lvert S_f\lvert} \sw_f^{\beta}.
\end{equation}
We consider similar modifications $\widetilde{a}_f^{\prime\alpha}$ and $\widetilde{\sw}_f^{\prime\beta}$ of $a_f^{\prime\alpha}$ and $\sw_f^{\prime\beta}$ respectively, by replacing $S_f$ with $S'_f$.

\begin{prop} \label{DiscVar}
We denote by $\langle \cdot, \cdot\rangle$ the usual pairing of class functions on $G$ \cite[2.2, Remarques]{Serre2}.
Let $H$ be a subgroup of $G$ and $\partial_{f_H}$ the discriminant function \eqref{Discriminant} associated to the quotient morphism $f_H: X/H\to C$ induced by $f$.
\begin{itemize}
\item[(i)] We have the following identities
\begin{equation}
	\label{DiscVar 1}
\partial_{f_H}(r) =\langle \widetilde{a}_f^{\alpha}, \bQ[G/H]\rangle,
\end{equation}
\begin{equation}
	\label{DiscVar 2}
\partial_{f_H}(r') =\langle \widetilde{a}_f^{\prime\alpha}, \bQ[G/H]\rangle,
\end{equation}
where the representation $\bQ[G/H]=\Ind_H^G 1_H$ of $G$ stands in abusively for its character.
\item[(ii)] We assume that $X/H$ has \emph{trivial canonical sheaf} and that $f_H$ satisfies the decomposition hypothesis \eqref{Nearby Cycles Lutke 1}. Then, we also have the following identity for the right and left derivatives of $\partial_{f_H}^{\alpha}$ at $r$ and $r'$ respectively
\begin{equation}
	\label{DiscVar 3}
\frac{d}{dt}\partial_{f_H}(r+) - \frac{d}{dt}\partial_{f_H}(r'-)=\langle \widetilde{\sw}_f^{\beta}, \bQ[G/H]\rangle + \langle \widetilde{\sw}_f^{\prime\beta}, \bQ[G/H]\rangle,
\end{equation}
\end{itemize}
\end{prop}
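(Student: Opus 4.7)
For part (i), the plan is to localize the discriminant function at the two generic points $\p,\p'$ of the special fiber of $\scrC_{K'}$. I would first handle $H=\{1\}$: the discriminant $\d_f[r]$ factorizes, over the normalized integral model, as a product of local discriminants, one for each geometric point $\ox_\tau$ of $\fX_{K'}$ above $o$. The local factor at $\tau\in S_f$ is the discriminant of the Galois extension $V^h_\tau/V^h$, which by Lemma~\ref{ActionTransitive-ExtenionsGaloisiennes-RevetementQuotient}(2) has group $G_\tau$, and whose $v^\alpha$-valuation equals $\langle a^\alpha_{G_\tau}, r_{G_\tau}\rangle$ by the conductor--discriminant formula in Kato's $\bZ^2$-valuation setting. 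Summing over $\tau\in S_f$, using $|S_f|=[G:G_\tau]$, and applying Frobenius reciprocity produces $\partial_f(r)=\langle\widetilde{a}_f^\alpha, r_G\rangle$. The identity at $r'$ follows by the same argument with $\p'$ in place of $\p$.

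To extend to general $H$, I would combine Frobenius reciprocity
\[
\langle \Ind_{G_\tau}^G a^\alpha_{G_\tau}, \Ind_H^G 1_H\rangle = \langle a^\alpha_{G_\tau}, \mathrm{Res}_{G_\tau}^G\Ind_H^G 1_H\rangle
\]
with the Mackey double-coset decomposition of $\mathrm{Res}\circ\Ind$ and Lemma~\ref{ActionTransitive-ExtenionsGaloisiennes-RevetementQuotient}(3): the fibers of $S_f\to S_{f_H}$ are the $H$-orbits in $S_f$, while the double cosets $H\backslash G/G_\tau$ parameterize the elements of $S_{f_H}$ lying under a fixed $\tau$, with local Galois group $gHg^{-1}\cap G_\tau$ at each one. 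The Mackey decomposition therefore reassembles the pairing $\langle\widetilde{a}_f^\alpha, \Ind_H^G 1_H\rangle$ into precisely the sum of local Artin contributions defining $\partial_{f_H}(r)$ as in the $H=\{1\}$ case, and likewise at $r'$.

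For part (ii), I would apply three tools in sequence to $f_H$, whose hypotheses match exactly those assumed in the statement: first, Lütkebohmert's formula (Proposition~\ref{Variation-Lutke}) on the outermost intervals rewrites $\frac{d}{dt}\partial_{f_H}(r+) - \frac{d}{dt}\partial_{f_H}(r'-)$ as $(\sigma_{f_H} - \sigma'_{f_H}) + (\delta_{f_H} - \delta'_{f_H})$; next, the nearby cycles formula~\eqref{Nearby Cycles Lutke 2} turns this into $\sum_j (d_\eta(B_j/A) - 2\delta(B_j) + |P_s(B_j)|)$; finally, Proposition~\ref{Sum-Kato-Formula} converts it into $d_{f_H,s} + d'_{f_H,s} - 2\deg(f_H) + \sum_j |P_s(B_j)|$. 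Recognizing $d_{f_H,s}$ (resp.\ $d'_{f_H,s}$) as a sum of local $v^\beta$-discriminants $v^\beta(c(B_\tau/A,\p,\q_\tau))$ over $\tau\in S_{f_H}$ (resp.\ $S'_{f_H}$), Kato's relation between these $v^\beta$-discriminants and the Swan pairings $\langle\sw^\beta_{G_\tau}, r_{G_\tau}\rangle$ (a $v^\beta$-analogue of the Artin--Swan tame correction, controlled by $|G_\tau|$ and $|P_s|$) allows one to rewrite the right-hand side as $\langle\widetilde{\sw}_f^\beta + \widetilde{\sw}_f^{\prime\beta}, r_G\rangle$, provided the tame corrections telescope with $-2\deg(f_H) + \sum_j |P_s(B_j)|$. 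The reduction to general $H$ then proceeds as in part (i), via Frobenius reciprocity and Mackey.

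The main obstacle is this final bookkeeping step: verifying that the combinatorial "unit" terms $-2\deg(f_H) + \sum_j |P_s(B_j)|$ collectively match the aggregate tame corrections in the local $v^\beta$-discriminant/Swan conversion, summed consistently over both $S_{f_H}$ and $S'_{f_H}$ and across $G$-orbits. Unlike the disc setting of \cite{VCS} where only one boundary is present, here both generic points $\p$ and $\p'$ contribute and the bookkeeping must balance across both sides of the annulus in a single identity.
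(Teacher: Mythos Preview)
Your proposal is correct and follows essentially the same route as the paper: Frobenius reciprocity plus Mackey's double-coset decomposition to reduce the pairing with $\bQ[G/H]$ to a sum of local conductor--discriminant relations indexed by $S_{f_H}$, combined with Propositions~\ref{Sum-Kato-Formula}, \ref{Nearby Cycles Lutke}, and \ref{Variation-Lutke} for part~(ii). Two minor differences: the paper treats general $H$ directly from the start rather than doing $H=\{1\}$ first, and for (ii) it argues in the opposite direction (from the Swan pairing toward the derivative), which makes the ``bookkeeping'' you flag entirely transparent. Namely, the paper invokes the explicit formula \cite[(6.18.5)]{VCS}
\[
\lvert G_\tau\rvert\,\langle \sw_{G_\tau}^\beta, \bQ[G_\tau/H_{\sigma,\tau}]\rangle = v^\beta(\d_{V^h(\sigma,\tau)/V^h}) - \frac{\lvert G_\tau\rvert}{\lvert H_{\sigma,\tau}\rvert} + 1,
\]
and then observes that, under the bijection $G_\tau\backslash G/H \xrightarrow{\sim} S_{f_H}$, the correction terms sum to $-\deg(f_H)+\lvert S_{f_H}\rvert$; together with the primed analogue this gives exactly $-2\deg(f_H)+\lvert S_{f_H}\rvert+\lvert S'_{f_H}\rvert = -2\deg(f_H)+\sum_j\lvert P_s(B_j)\rvert$, resolving your obstacle.
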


\begin{proof}
We first note that, as $G$ acts transitively on $S_f$, we have $\lvert G_{\tau}\lvert =\lvert G\lvert /\lvert S_f\lvert$. By Frobenius reciprocity, we have the following identities
\begin{equation}
	\label{DiscVar 4}
\langle a_f^{\alpha}, \bQ[G/H]\rangle=\langle a_{G_{\tau}}^{\alpha}, \bQ[G/H]\lvert G_{\tau} \rangle,
\end{equation}
\begin{equation}
	\label{DiscVar 5}
\langle \sw_f^{\beta}, \bQ[G/H]\rangle=\langle \sw_{G_{\tau}}^{\beta}, \bQ[G/H]\lvert G_{\tau} \rangle.
\end{equation}
Let $R$ be a set of representatives in $G$ of the double cosets $G_{\tau}\backslash G/H$. From \cite[\S 7.3, Prop. 22]{Serre2}, for $\tau \in S_f$, we have the identity
\begin{equation}
	\label{DiscVar 6}
\bQ[G/H]\lvert G_{\tau}= \bigoplus_{\sigma \in R} \Ind_{H_{\sigma}}^{G_{\tau}} 1_{H_{\sigma, \tau}},
\end{equation}
where $H_{\sigma, \tau}=\sigma H\sigma^{-1}\cap G_{\tau}$. If $\sigma\in R$ and $g\sigma h$ is another representative of the double coset $G_{\tau}\sigma H$, then $H_{g\sigma h, \tau}=H_{\sigma, \tau}$. Hence, $H_{\sigma, \tau}$ depends only on the double coset $G_{\tau}\sigma H$ and the sum \eqref{DiscVar 6} is taken over $G_{\tau}\backslash G/H$. Therefore, we have
\begin{equation}
	\label{DiscVar 7}
\langle a_f^{\alpha}, \bQ[G/H]\rangle=\sum_{\sigma\in G_{\tau}\backslash G/H} \langle a_{G_{\tau}}^{\alpha}, \bQ[G_{\tau}/H_{\sigma, \tau}]\rangle,
\end{equation}
\begin{equation}
	\label{DiscVar 8}
\langle \sw_f^{\beta}, \bQ[G/H]\rangle=\sum_{\sigma\in G_{\tau}\backslash G/H} \langle \sw_{G_{\tau}}^{\beta}, \bQ[G_{\tau}/H_{\sigma, \tau}]\rangle.
\end{equation}
From \cite[(6.18.4) and (6.18.5)]{VCS}, we get
\begin{equation}
	\label{DiscVar 9}
\lvert G_{\tau}\lvert \langle a_{G_{\tau}}^{\alpha}, \bQ[G_{\tau}/H_{\sigma, \tau}]\rangle =v^{\alpha}(\mathfrak{d}_{V^h(\sigma, \tau)/V^h}),
\end{equation}
\begin{equation}
	\label{DiscVar 10}
\lvert G_{\tau}\lvert \langle \sw_{G_{\tau}}^{\beta}, \bQ[G_{\tau}/H_{\sigma, \tau}]\rangle =v^{\beta}(\mathfrak{d}_{V^h(\sigma, \tau)/V^h}) -\frac{\lvert G_{\tau}\lvert}{\lvert H_{\sigma, \tau}\lvert}+ 1,
\end{equation}
where $V^h(\sigma, \tau)=(V^h_{\tau})^{H_{\sigma, \tau}}$. The subgroup $\sigma^{-1} H_{\sigma, \tau}\sigma$ of $\sigma^{-1} G_{\tau}\sigma=G_{\sigma^{-1}\cdot\tau}$ is $H_{{\rm id}, \sigma^{-1}\cdot\tau}$. Then, $\sigma^{-1}$ yields an isomorphism of $\bZ^2$-valuation rings $V^h(\sigma^{-1}\cdot\tau)\xrightarrow{\sigma} V^h_{\tau}$, via \eqref{Group action 1}, which induces an isomorphism
\begin{equation}
	\label{DiscVar 11}
V^h({\rm id}, \sigma^{-1}\cdot\tau)=(V^h(\sigma^{-1}\cdot\tau))^{\sigma^{-1}H_{\sigma, \tau}\sigma} \xrightarrow{\sim} V^h(\sigma, \tau).
\end{equation}
By \ref{ActionTransitive-ExtenionsGaloisiennes-RevetementQuotient} (3), the map $C: G_{\tau}\backslash G/H \to S_{f_H}, ~  G_{\tau}\sigma H\mapsto g_S (\sigma^{-1}\cdot \tau)$ is well-defined since, for $g\in G_{\tau}$ and $h\in H$, $(g\sigma h)^{-1}\cdot \tau=h^{-1}\cdot \tau$.
Moreover, as $G$ acts transitively on $S_f$ (\ref{ActionTransitive-ExtenionsGaloisiennes-RevetementQuotient} (1)) and $g_S$ is surjective, $C$ is also surjective.
If $C( G_{\tau}\sigma H)=C( G_{\tau}\sigma' H)$, then there exists $h\in H$ such that $\sigma^{-1}\cdot \tau=h\sigma^{'-1}\cdot \tau$; so $\sigma h \sigma^{'-1} \in G_{\tau}$ and thus $\sigma'\in  G_{\tau}\sigma H$. Hence, $C$ is also injective, hence a bijection.
We also have $V^h({\rm id}, \sigma^{-1}\cdot\tau)=V^h(g_S(\sigma^{-1}\cdot\tau))$.
It follows that, if $\sigma, \sigma' \in R$ represent double cosets such that $g_S(\sigma^{-1}\cdot\tau)=g_S(\sigma^{'-1}\cdot\tau)$, then $V^h({\rm id}, \sigma^{-1}\cdot\tau)=V^h({\rm id}, \sigma^{'-1}\cdot\tau)$.

Combining all this with \eqref{DiscVar 7}, \eqref{DiscVar 8}, \eqref{DiscVar 9} and \eqref{DiscVar 10} yields
\begin{equation}
	\label{DiscVar 12}
\langle \widetilde{a}_f^{\alpha}, \bQ[G/H]\rangle =\sum_{\tau'\in S_{f_H}} v^{\alpha}(\mathfrak{d}_{V^h_{\tau'}/V^h})=v^{\alpha}\left(\prod_{\tau'\in S_{f_H}} \mathfrak{d}_{V^h_{\tau'}/ V^h}\right)=\partial_{f_H}(r),
\end{equation}
\begin{equation}
	\label{DiscVar 13}
\langle \widetilde{\sw}_f^{\beta}, \bQ[G/H]\rangle=v^{\beta} \left(\prod_{\tau'\in S_{f_H}}\mathfrak{d}_{V^h_{\tau'}/ V^h}\right) - \deg(f_H) + \lvert S_{f_H}\lvert.
\end{equation}
Equation \eqref{DiscVar 12} yields the identities \eqref{DiscVar 1}. The same arguments work for $\widetilde{a}_f^{\prime\alpha}$ and $\widetilde{\sw}_f^{\prime\beta}$, producing identities similar to \eqref{DiscVar 12} and \eqref{DiscVar 13}, yielding also \eqref{DiscVar 2}. \newline
From \eqref{DiscVar 13}  and the corresponding formula for $\widetilde{\sw}_f^{\prime\beta}$, we see by \eqref{S_f & d_f 1} and \eqref{S_f & d_f 2} that
\begin{equation}
	\label{DiscVar 14}
\langle \widetilde{\sw}_f^{\beta}, \bQ[G/H]\rangle=d_{f_H, s}- \deg(f_H)+ \lvert S_{f_H}\lvert.
\end{equation}
\begin{equation}
	\label{DiscVar 15}
\langle \widetilde{\sw}_f^{\prime\beta}, \bQ[G/H]\rangle=d'_{f_H, s}- \deg(f_H)+ \lvert S'_{f_H}\lvert.
\end{equation}
Whence we deduce that
\begin{equation}
	\label{DiscVar 16}
\langle \widetilde{\sw}_f^{\beta} + \widetilde{\sw}_f^{\prime\beta}, \bQ[G/H]\rangle= d_{f_H, s} + d'_{f_H, s} - 2\deg(f_H) + \lvert S_{f_H} \lvert + \lvert S'_{f_H}\lvert.
\end{equation}
As $\lvert S_{f_H}\lvert + \lvert S'_{f_H}\lvert$ is also the sum over $j=1, \ldots, N$ of the integers $\lvert P_s(B_j)\lvert$ (notation of \ref{Sum-Kato-Formula}), by \ref{Sum-Kato-Formula}, applied to $f_H$, we now see that
\begin{equation}
	\label{DiscVar 17}
\langle \widetilde{\sw}_f^{\beta} + \widetilde{\sw}_f^{\prime\beta}, \bQ[G/H]\rangle=\sum_{j=1}^N (d_{\eta}(B_j/A) - 2\delta(B_j)+ \lvert P_s(B_j)\lvert).
\end{equation}
Now, if $X/H$ has a trivial canonical sheaf and $f_H$ satisfies the decomposition hypothesis \eqref{Nearby Cycles Lutke 1}, then we can apply to it the formula \eqref{Nearby Cycles Lutke 2}. The latter, combined with
\ref{Variation-Lutke 1} applied to $f_H$, and \eqref{DiscVar 17}, yields
\begin{equation}
	\label{DiscVar 18}
\langle \widetilde{\sw}_f^{\beta} + \widetilde{\sw}_f^{\prime\beta}, \bQ[G/H]\rangle=\sigma + \delta_{f_H} - (\sigma' + \delta'_{f_H})=\frac{d}{dt}\partial_{f_H}(r+) - \frac{d}{dt}\partial_{f_H}(r'-).
\end{equation}
\end{proof}

\subsection{}\label{Artin-Intrinseque}
Let $r \leq t \leq r'$ be a rational number, put $C^{[t]}=A(t, t)$, $X^{[t]}=f^{-1}(C^{[t]})$ and denote by $f^{[t]}: X^{[t]}\to C^{[t]}$ the morphism induced by $f$.
Let $K'$ be a finite extension of $K$ such that the normalized integral models $\scrC_{K'}^{[t]}$, $\fX_{K'}^{[t]}$ and $\hf_{K'}^{[t]}: \fX_{K'}^{[t]}\to \scrC_{K'}^{[t]}$ of $C^{[t]}$, $X^{[t]}$ and $f^{[t]}$ respectively are defined over $\cO_{K'}$ \eqref{Models Formels}. 
Let $\p^{(t)}$ be the generic point of the special fiber $\scrC_{s'}^{[t]}$ of $\scrC_{K'}^{[t]}$ and $\overline{\p}^{(t)}\to \scrC_{s'}^{[t]}$ a geometric generic point.
The latter defines a geometric point of $\scrC_{K'}^{[t]}$ \cite[2.5]{VCS}.
Let $\fX_{K'}^{[t]}(\overline{\p}^{(t)})$ be the set of geometric generic points of $\fX_{K'}^{[t]}$ above $\overline{\p}^{(t)}$. The natural right action of $G$ on $X$ induces an action of $G$ on $\fX_{K'}^{[t]}$, hence an action of $G$ on $\fX_{K'}^{[t]}(\overline{\p}^{(t)})$. The latter action is transitive.
For $\overline{\q}^{(t)}\in \fX_{K'}^{[t]}(\overline{\p}^{(t)})$, the natural map
\begin{equation}
	\label{Artin-Intrinseque 1}
A_{\overline{\p}^{(t)}}=\cO_{\scrC_{K'}^{[t]}, \overline{\p}^{(t)}}\to A_{\overline{\q}^{(t)}}=\cO_{\fX_{K'}^{[t]}, \overline{\q}^{(t)}}
\end{equation}
is a finite homomorphism of henselian discrete valuation rings \cite[2.8, 2.12]{VCS}. The induced extension of fields of fractions $\bK_{\overline{\q}^{(t)}}/\bK_{\overline{\p}^{(t)}}$ is Galois of group $G_{\overline{\q}^{(t)}}$ isomorphic to the stabilizer of $\overline{\q}^{(t)}$ under the action of $G$ on $\fX_{K'}^{[t]}(\overline{\p}^{(t)})$ (\textit{cf}. proof of \ref{Generic-Étale} (ii)).
As the normalized integral models have (geometrically) reduced special fibers, the extension $\bK_{\overline{\q}^{(t)}}/\bK_{\overline{\p}^{(t)}}$ has ramification index $1$. Moreover,
its residue extension $\kappa(\overline{\q}^{(t)})/\kappa(\overline{\p}^{(t)})$ is a monogenic extension of discrete valuation fields, with trivial residue extension as $k$ is algebraically closed.
To properly justify this claim, one picks arbitrary specializations of $\overline{\p}^{(t)}$ and $\overline{\q}^{(t)}$ to (geometric) closed points of the special fibers of $\scrC_{K'}^{[t]}$ and $\fX_{K'}^{[t]}$ respectively, above one another, and applies \cite[3.18]{VCS}, followed by \cite[III, \S 6, Prop. 12]{Serre1}.
Let $\bK'_{\overline{\p}^{(t)}}$ be the maximal unramified sub-extension of $\bK_{\overline{\q}^{(t)}}/\bK_{\overline{\p}^{(t)}}$. Then, $\bK_{\overline{\q}^{(t)}}/\bK'_{\overline{\p}^{(t)}}$ is of type (II), i.e. it has ramification index $1$ and a purely inseparable and monogenic residue extension $\kappa(\overline{\q}^{(t)})/\kappa'(\overline{\p})$.
Let $A'_{\overline{\p}^{(t)}}$ be the valuation ring of $\bK'_{\overline{\p}^{(t)}}$ and $G'_{\overline{\q}^{(t)}}$ the Galois group of $\bK_{\overline{\q}^{(t)}}/\bK'_{\overline{\p}^{(t)}}$. Then, the $A'_{\overline{\p}^{(t)}}$-algebra $A_{\overline{\q}^{(t)}}$ is monogenic (so is the $A_{\overline{\p}^{(t)}}$-algebra $A'_{\overline{\p}^{(t)}}$) \cite[III, \S 6, Prop. 12]{Serre1}. (Note that \textit{loc. cit.}, with the same proof, applies here because $\kappa(\overline{\q}^{(t)})/\kappa'(\overline{\p})$, despite being purely inseparable, is monogenic.) Then, just as in the classical setting, we have an \emph{Artin class function}
$a_{G'_{\overline{\q}^{(t)}}}: G'_{\overline{\q}^{(t)}}\to \bZ$ on $G'_{\overline{\q}^{(t)}}$ defined as follows. 
Let $b$ be a generator of $A_{\overline{\q}^{(t)}}$ over $A'_{\overline{\p}^{(t)}}$, denote by $v_{\overline{\q}^{(t)}}: \bK_{\overline{\q}^{(t)}}^{\times}\to \bZ$ its normalized valuation map associated to $A_{\overline{\q}^{(t)}}$ and put
\begin{equation}
	\label{Artin-Intrinseque 2} 
a_{G'_{\overline{\q}^{(t)}}}(\sigma)=- v_{\overline{\q}^{(t)}}(\sigma(b) - b) \quad {\rm if}\quad \sigma\neq 1,
\end{equation}
\begin{equation}
	\label{Artin-Intrinseque 3}
a_{G'_{\overline{\q}^{(t)}}}(1)= - \sum_{\sigma\neq 1} a_{G'_{\overline{\q}^{(t)}}}(\sigma).
\end{equation}
This definition is independent of the chosen generator $a$ because \cite[IV, \S 1, Lemme 1]{Serre1}
\begin{equation}
	\label{Artin-Intrinseque 4}
v_{\overline{\q}^{(t)}}(\sigma(b)-b) =\min\{v_{\overline{\q}^{(t)}}(\sigma(x)-x) ~\lvert ~ x \in A_{\overline{\q}^{(t)}}\}.
\end{equation}
Since, for $\overline{\q}^{(t)}$ varying in $\fX_{K'}^{[t]}(\overline{\p}^{(t)})$, the subgroups $G'_{\overline{\q}^{(t)}}\subset G_{\overline{\q}^{(t)}}$ of $G$, as well as the functions $a_{G'_{\overline{\q}^{(t)}}}$, are all conjugate, we have a well-defined class function
\begin{equation}
	\label{Artin-Intrinseque 5}
\widetilde{a}_{f^{[t]}}=\Ind_{G'_{\overline{\q}^{(t)}}}^G (\lvert G'_{\overline{\q}^{(t)}}\lvert \cdot a_{G'_{\overline{\q}^{(t)}}}).
\end{equation}

\subsection{}\label{FunctionsArtinSwan}
We let $\ell$ be a prime number different form the residue characteristic $p$ of $K$, fix $\overline{\bQ}_{\ell}$ a separable closure of $\bQ_{\ell}$ and denote by $R_{\overline{\bQ}_{\ell}}(G)$ the Grothendieck group of finitely generated $\overline{\bQ}_{\ell}[G]$-modules. We denote by $\cI([r, r']\cap \bQ)$ the set of intervals of $[r', r]$ bounded by rational numbers and \textit{which are not singletons}. For $[t, t']\in \cI([r, r']\cap \bQ)$ (with $t\neq t'$), we denote the restriction of $f$ over $A(t, t')$ by 
\begin{equation}
	\label{FunctionsArtinSwan 1}
f^{[t, t']}: f^{-1}(A(t', t))\to A(t', t).
\end{equation}
Then, by \ref{Normalized Conductors}, we have well-defined associated class functions on $G$
\begin{equation}
	\label{FunctionsArtinSwan 2}
\widetilde{a}_{f^{[t, t']}}^{\alpha}, ~ \widetilde{a}_{f^{[t, t']}}^{\prime\alpha}, ~ \widetilde{\sw}_{f^{[t, t']}}^{\beta} ~{\rm and}~ \widetilde{\sw}_{f^{[t, t']}}^{\prime\beta}. 
\end{equation}
By \ref{DiscVar} (i), the rational number $\langle \widetilde{a}_{f^{[t, t']}}^{\alpha}, \bQ[G/H]\rangle$ (resp. $\langle \widetilde{a}_{f^{[t, t']}}^{\prime\alpha}, \bQ[G/H]\rangle$)
depends only on $t$ (resp. $t'$), not on the interval $[t, t']$. 
In fact, by taking $\tau=(\ox_{\tau}, \q_{\tau})\in S_{f^{[t, t']}}$ (resp. $\tau=(\ox_{\tau'}, \q_{\tau'})\in S'_{f^{[t, t']}}$) and letting $\overline{\q}^{(t)}$ (resp. $\overline{\q}^{(t')}$) be the geometric generic point of $\fX_{s'}^{[t]}$ (resp. $\fX_{s'}^{[t]}$) above $\overline{\p}^{(t)}$ (resp. $\overline{\p}^{(t')}$) in \ref{Artin-Intrinseque} corresponding to $\q_{\tau}$ (resp. $\q_{\tau'}$) \cite[3.21]{VCS}, we see that $G_{\tau}=G_{\overline{\q}^{(t)}}$ and $G_{\tau'}=G_{\overline{\q}^{(t')}}$ \eqref{Nearby Cycles Lutke 5}. Then, by \cite[(6.19.1)]{VCS}, we have
\eqref{Artin-Intrinseque 5}
\begin{equation}
	\label{FunctionsArtinSwan 3}
 a_{G_{\tau}}^{\alpha}\lvert G'_{\overline{\q}^{(t)}}=a_{G'_{\overline{\q}^{(t)}}} \quad {\rm and}\quad a_{G_{\tau'}}^{\alpha} \lvert G'_{\overline{\q}^{(t)}}=a_{G'_{\overline{\q}^{(t')}}};
\end{equation}
\begin{equation}
	\label{FunctionsArtinSwan 4}
\widetilde{a}_{f^{[t]}}=\widetilde{a}_{f^{[t, t']}}^{\alpha} 
\quad {\rm and}\quad 
\widetilde{a}_{f^{[t']}}=\widetilde{a}_{f^{[t, t']}}^{\prime\alpha},
\end{equation}
where the last equation follows from \eqref{FunctionsArtinSwan 3} and \cite[\S 7.2, Rem. 3]{Serre2}.
To $\chi\in R_{\overline{\bQ}_{\ell}}(G)$, we associate the functions
\begin{equation}
	\label{FunctionsArtinSwan 5}
\widetilde{a}_f(\chi, \cdot) : ~ [r, r'] \cap \bQ \to \bQ, \quad t\mapsto \langle \widetilde{a}_{f^{[t]}}, \chi\rangle,
\end{equation}
\begin{equation}
	\label{FunctionsArtinSwan 6}
\widetilde{\sw}_f^{\beta}(\chi, \cdot) : ~ \cI([r, r']\cap \bQ) \to \bQ, \quad [t, t']\mapsto \langle \widetilde{\sw}_{f^{[t, t']}}^{\beta} + \widetilde{\sw}_{f^{[t, t']}}^{\prime\beta}, \chi\rangle.
\end{equation}

\begin{prop}
	\label{Var-Reg}
	Let $H$ be a subgroup of $G$.
\begin{itemize}
\item[(i)] The function $\widetilde{a}_f(\bQ[G/H], \cdot)$ above is continuous and piecewise linear with finitely many slopes which are all integers. 
\item[(ii)] We assume that $X/H$ has \emph{trivial canonical sheaf}. Then, for rational numbers $r\leq t < t'\leq r'$, the difference of the right and left derivatives of $\widetilde{a}_f(\bQ[G/H], \cdot)$ at $t$ and $t'$ respectively is
\begin{equation}
	\label{Var-Reg 1}
\frac{d}{dt}\widetilde{a}_f(\bQ[G/H], t+) - \frac{d}{dt}\widetilde{a}_f(\bQ[G/H], t'-)=\widetilde{\sw}_f^{\beta}(\bQ[G/H], [t, t']).
\end{equation}
\end{itemize}
\end{prop}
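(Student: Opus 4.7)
The plan is to identify $\widetilde{a}_f(\bQ[G/H], \cdot)$ with the discriminant function $\partial_{f_H}$ of the quotient morphism $f_H : X/H \to C$, and then to derive (i) and (ii) from Lütkebohmert's variational result \ref{Variation-Lutke} and the discriminant variation formula \ref{DiscVar}(ii), respectively. For any rational $t \in [r, r']$ and any $t' \neq t$ in $[r, r']$, combining \eqref{FunctionsArtinSwan 4} with the identity in \ref{DiscVar}(i) applied to the restricted morphism $f^{[t, t']}$ and the subgroup $H$ gives $\widetilde{a}_f(\bQ[G/H], t) = \partial_{f_H^{[t,t']}}(t)$, and the latter equals $\partial_{f_H}(t)$ since the discriminant at a given radius depends only on the zero-thickness restriction there \eqref{Discriminant}. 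Hence $\widetilde{a}_f(\bQ[G/H], \cdot) = \partial_{f_H}(\cdot)$ on $[r, r'] \cap \bQ$.

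For (i), the quotient morphism $f_H$ is finite, flat and generically étale by \ref{ActionTransitive-ExtenionsGaloisiennes-RevetementQuotient}(3). Applying \ref{Variation-Lutke} to $f_H$ then produces a finite set of critical radii of $f_H$ in $[r, r']$ outside of which $\partial_{f_H}$ is affine with integer slopes given by \eqref{Variation-Lutke 1}. Continuity of $\partial_{f_H}$ on the whole of $[r, r'] \cap \bQ$ is built into the definition of the discriminant as a sup-norm, and piecewise linearity with finitely many integer slopes is then immediate, proving (i).

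For (ii) in the case where $t < t'$ are both non-critical radii of $f$, \ref{Decomposition SemiStable} ensures that the decomposition hypothesis \eqref{Nearby Cycles Lutke 1} holds for $f^{[t,t']}$, while triviality of the canonical sheaf of $X/H$ is inherited by $X^{[t,t']}/H$. Thus \ref{DiscVar}(ii) applied to $f^{[t,t']}$ with subgroup $H$ yields
\[
\frac{d}{dt}\partial_{f_H}(t+) - \frac{d}{dt}\partial_{f_H}(t'-) = \langle \widetilde{\sw}_{f^{[t,t']}}^{\beta} + \widetilde{\sw}_{f^{[t,t']}}^{\prime\beta}, \bQ[G/H]\rangle = \widetilde{\sw}_f^{\beta}(\bQ[G/H], [t,t']),
\]
which, combined with the identification of the first paragraph, is the desired formula in this case.

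For general rational $t < t'$, possibly with critical endpoints, the strategy is to choose a subdivision $t = s_0 < s_1 < \cdots < s_n = t'$ with interior points $s_1, \ldots, s_{n-1}$ non-critical for $f$, apply the previous case to each sub-interval whose both endpoints are non-critical, and telescope. Since $\widetilde{\sw}_{f^{[t,t']}}^{\beta}$ depends only on $t$ and $\widetilde{\sw}_{f^{[t,t']}}^{\prime\beta}$ only on $t'$ (being computed from the henselian $\bZ^2$-valuation rings at the height-one primes above $\p$ and $\p'$ in the normalized integral model), the telescoping reduces to showing that at any non-critical intermediate $s_i$ the two Swan pieces with $s_i$ as endpoint cancel—a vanishing that follows from the non-critical case applied to a sub-interval lying inside a single linear segment of $\partial_{f_H}$, where both sides vanish because the slope is constant. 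The main obstacle is this last step: namely, checking that the telescoping correctly assembles the contributions at the (possibly critical) endpoints $t$ and $t'$ into $\widetilde{\sw}_f^{\beta}(\bQ[G/H], [t, t'])$, which is what justifies that the right-hand side of (ii) is intrinsic to the pair $(t, t')$ despite being defined via a single global interval rather than via the telescoped sum.
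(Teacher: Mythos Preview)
Your first three paragraphs are correct and follow the paper's approach closely: identify $\widetilde{a}_f(\bQ[G/H],\cdot)$ with $\partial_{f_H}$ via \eqref{FunctionsArtinSwan 4} and \ref{DiscVar}(i), deduce (i) from \ref{Variation-Lutke}, and obtain (ii) for non-critical $t,t'$ from \ref{DiscVar}(ii) via \ref{Decomposition SemiStable}.

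Your fourth paragraph, however, overcomplicates the passage to critical endpoints. The paper does not telescope. Its argument is simply that, $\partial_{f_H}$ being piecewise linear with breakpoints only at critical radii, the right derivative at $t$ equals the right derivative at any nearby non-critical $t_1>t$, and likewise for the left derivative at $t'$; thus the left-hand side of \eqref{Var-Reg 1} is unchanged upon shrinking to non-critical endpoints, and \ref{DiscVar}(ii) applies. In other words, the paper treats the sentence ``As we are computing left and right derivatives (of a piecewise linear function), \eqref{DiscVar 3} applies'' as sufficient justification.

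The subtlety you flag---whether $\widetilde{\sw}^\beta_f(\bQ[G/H],[t,t'])$ is intrinsic to the pair of endpoints, so that the right-hand side is also unchanged under such shrinking---is genuine, and the paper does not address it any more explicitly than you do. (A direct justification would observe that, by \eqref{DiscVar 14}--\eqref{DiscVar 15}, each pairing is expressed through the henselian $\bZ^2$-valuation data at the prime $\p$ or $\p'$ respectively, which by the Cartesian diagram \eqref{Nearby Cycles Lutke 5} depends only on the corresponding zero-thickness fiber, hence only on $t$ or $t'$.) Your telescoping scheme is a workaround for this point, but since the paper itself is content with the piecewise-linearity remark, your first three paragraphs already match its level of detail; the last paragraph can be dropped or replaced by the one-line observation above.
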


\begin{proof}
We know form \cite[(4.17.4), (4.22.2)]{VCS} that
\begin{equation}
	\label{Var-Reg 2}
\partial_{f_H^{[t, t']}}(t)=\partial_{f_H}(t)\quad {\rm and}\quad \partial_{f_H^{[t, t']}}(t')=\partial_{f_H}(t')
\end{equation}
Hence, (i) follows from \eqref{Var-Reg 2}, \eqref{FunctionsArtinSwan 4}, \ref{DiscVar}(i) and \ref{Variation-Lutke}. By \ref{Decomposition SemiStable}, the decomposition hypothesis \eqref{Nearby Cycles Lutke 1} is satisfied by $f_H$ for all but a finite number of radii. As we are computing left and right derivatives (of a piecewise linear function), \eqref{DiscVar 3} applies and, with \eqref{DiscVar}(i), \eqref{FunctionsArtinSwan 4} and \eqref{Var-Reg 2}, yield (ii).
\end{proof}

\begin{thm}
	\label{Variation}
We assume that, for every subgroup $H\subset G$, the quotient $X/H$ has a trivial canonical sheaf.
Let $\chi\in R_{\overline{\bQ}_{\ell}}(G)$. Then, the function $\widetilde{a}_f(\chi, \cdot)$ \eqref{FunctionsArtinSwan 5}
is continuous and piecewise linear with finitely many slopes which are all integers. For rational numbers $r\leq t < t'\leq r'$, the difference of the right and left derivatives of $\widetilde{a}_f(\chi, \cdot)$ at $t$ and $t'$ respectively is
\begin{equation}
	\label{Variation 1}
\frac{d}{dt}\widetilde{a}_f(\chi, t+) - \frac{d}{dt}\widetilde{a}_f(\chi, t'-)=\widetilde{\sw}_f^{\beta}(\chi, [t, t']).
\end{equation}
\end{thm}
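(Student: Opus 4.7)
The plan is to reduce Theorem~\ref{Variation} to the special case $\chi=\bQ[G/H]$ handled by Proposition~\ref{Var-Reg}, by means of Artin's induction theorem and $\bQ$-linearity, and then to upgrade the resulting rational slopes to integer slopes by a separate integrality analysis.

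First I would apply Artin's induction theorem \cite[\S 9.2, Th\'eor\`eme 17]{Serre2}: for any $\chi\in R_{\overline{\bQ}_\ell}(G)$ there is a decomposition
\[
\chi=\sum_{i\in I}c_i\,\bQ[G/H_i]\quad\text{in}\ R_{\overline{\bQ}_\ell}(G)\otimes_{\bZ}\bQ,
\]
where the $H_i$ run through a finite family of cyclic subgroups of $G$ and $c_i\in\bQ$. Since the class functions $\widetilde{a}_{f^{[t]}}$ and $\widetilde{\sw}_{f^{[t,t']}}^{\beta}+\widetilde{\sw}_{f^{[t,t']}}^{\prime\beta}$ do not depend on $\chi$ and the pairing $\langle\cdot,\cdot\rangle$ is $\bQ$-linear in its second argument, this decomposition propagates to
\[
\widetilde{a}_f(\chi,t)=\sum_{i\in I}c_i\,\widetilde{a}_f(\bQ[G/H_i],t),\qquad \widetilde{\sw}_f^{\beta}(\chi,[t,t'])=\sum_{i\in I}c_i\,\widetilde{\sw}_f^{\beta}(\bQ[G/H_i],[t,t']).
\]

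Under our standing hypothesis that every quotient $X/H$ has trivial canonical sheaf, Proposition~\ref{Var-Reg} applies to each summand $\bQ[G/H_i]$: the function $\widetilde{a}_f(\bQ[G/H_i],\cdot)$ is continuous and piecewise linear with finitely many integer slopes, and satisfies the variational identity~\eqref{Var-Reg 1}. Summing over $i\in I$ yields that $\widetilde{a}_f(\chi,\cdot)$ is continuous and piecewise linear with finitely many (a priori rational) slopes, and the identity~\eqref{Variation 1} follows by termwise summation of~\eqref{Var-Reg 1}.

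The main obstacle is the integrality of the slopes, since the Artin decomposition only gives $\bQ$-linear combinations of the integer slopes of the $\widetilde{a}_f(\bQ[G/H_i],\cdot)$. I would handle it in two steps. First, I would show that every slope \emph{jump} is an integer: by~\eqref{Variation 1} this reduces to $\widetilde{\sw}_f^{\beta}(\chi,[t,t'])\in\bZ$ for arbitrary $\chi$, and unwinding definition~\eqref{Normalized Conductors 1} via Frobenius reciprocity together with the equality $\lvert G\lvert/\lvert S_f\lvert=\lvert G_{\tau}\lvert$ gives
\[
\widetilde{\sw}_f^{\beta}(\chi,[t,t'])=\lvert G_{\tau}\lvert\,\langle \sw^{\beta}_{G_{\tau}},\chi\vert G_{\tau}\rangle+\lvert G'_{\tau'}\lvert\,\langle \sw^{\prime\beta}_{G'_{\tau'}},\chi\vert G'_{\tau'}\rangle,
\]
in which the $\lvert G_{\tau}\lvert$-factor cancels the $1/\lvert G_{\tau}\lvert$ denominator of the pairing and, after a further Artin reduction inside $G_{\tau}$ (resp.\ $G'_{\tau'}$) combined with the explicit integer formula~\eqref{DiscVar 10}, the right-hand side is an integer. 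Second, one must anchor a single slope on some interval of affinity to an integer value; this is the delicate point, and I would appeal to the projective geometric realization of $\widetilde{\sw}_f^{\beta}([t,t'])$ announced in the introduction (Theorem~\ref{Intro-Projective Realization}), which exhibits $\widetilde{\sw}_f^{\beta}([t,t'])$ as the character of an honest projective $\bQ_\ell[G]$-module. This forces every pairing $\langle\widetilde{\sw}_f^{\beta}([t,t']),\chi\rangle$ to equal the dimension of a $\Hom$-space, and, in combination with integer slope jumps, propagates integrality to all slopes of $\widetilde{a}_f(\chi,\cdot)$.
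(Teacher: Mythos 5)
Your first two steps (Artin induction over subgroups, $\bQ$-linearity of the pairing, termwise application of Proposition \ref{Var-Reg}) do reproduce the paper's route: the paper's proof is exactly the ``routine'' extension of \ref{Var-Reg} to arbitrary $\chi$ by character-theoretic induction arguments, following the template of \cite[Theorem 7.16]{VCS}. Your step establishing that the slope \emph{jumps} $\widetilde{\sw}_f^{\beta}(\chi,[t,t'])$ are integers is also essentially sound, provided you make the two halves explicit: the cancellation of $1/\lvert G_{\tau}\lvert$ shows the quantity is an algebraic integer (since $\sw^{\beta}_{G_{\tau}}$ is $\bZ$-valued and character values are algebraic integers), while Artin reduction plus \eqref{DiscVar 10} shows it is rational; integrality follows from the conjunction, not from either half alone, and as written your sentence suggests the second half suffices.

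The genuine gap is in your final ``anchoring'' step, which is where the integrality of the slopes themselves has to come from. Knowing that all differences of slopes are integers does not make any slope an integer, and the tool you invoke cannot supply the missing anchor: Theorem \ref{Realization-NearbyCycles-Swan} realizes precisely the class function $\widetilde{\sw}_f^{\beta}+\widetilde{\sw}_f^{\prime\beta}$, so the statement that $\langle\widetilde{\sw}_f^{\beta}([t,t']),\chi\rangle$ is the dimension of a $\Hom$-space only re-proves (and sharpens to non-negativity) the integrality of the jumps you already had; it says nothing about an individual slope on an interval of affinity. Moreover, that realization is proved under the hypothesis that $f$ is étale (it needs the vanishing of the $d_{\eta}(B_j/A)$ via \ref{AnnulationDe-d-Eta}), which is not assumed in Theorem \ref{Variation}, and the paper deliberately reserves the projectivity input for the convexity statement only (Remark \ref{Convexity-Expected} and Corollary \ref{Convexity}); in the paper's logical order it appears after Theorem \ref{Variation}. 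The paper instead inherits the integrality of the slopes from the dévissage carried out in the proof of \cite[Theorem 7.16]{VCS} (with \ref{Var-Reg} as input), not from an ``integer jumps plus one anchored slope'' argument, so as it stands your proposal does not prove the integrality assertion of the theorem.
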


\begin{proof}
Mutatis mutandis, the proof is the same as the one for \cite[Theorem 7.16]{VCS} with the key statement \cite[7.13]{VCS} replaced by \ref{Var-Reg} above.
\end{proof}

\subsection{}\label{Coeffs-finis}
Let $\Lambda$ be a finite extension of $\bQ_{\ell}$ inside $\overline{\bQ}_{\ell}$ and $\overline{\Lambda}$ its residue field. By \cite[16.1, Théorème 33]{Serre2}, we have a surjective homomorphism $d_G: R_{\Lambda}(G)\to R_{\overline{\Lambda}}(G)$, the Cartan homomorphism. Let $\overline{\chi}\in R_{\overline{\Lambda}}(G)$ and $\chi\in R_{\Lambda}(G)$ a pre-image of $\overline{\chi}$ by $d_G$. Then, for rational numbers $r\leq t < t'\leq r'$, we put 
\begin{equation}
	\label{Coeffs-finis 1}
\widetilde{a}_f(\overline{\chi}, t)= \widetilde{a}_f(\chi, t) \quad {\rm and}\quad \widetilde{\sw}_f^{\beta}(\overline{\chi}, [t, t'])=\widetilde{\sw}_f^{\beta}(\chi, [t, t']).
\end{equation}
These quantities are independent of the chosen pre-image $\chi$ : the proof is the same as the one given for the analogous statement \cite[7.18]{VCS}.

\begin{cor}
	\label{Variation-Coeffs-finis}
We resume the assumptions of \ref{Variation}.
Let $\overline{\chi} \in R_{\overline{\Lambda}}(G)$. Then, the function
\begin{equation}
\label{Variation-Coeffs-finis 1}
\widetilde{a}_f(\overline{\chi}, \cdot): ~ [r, r']\cap \bQ_{\geq 0}\to \bQ, \quad t\mapsto \widetilde{a}_f(\overline{\chi}, t)
\end{equation}
is continuous and piecewise linear with finitely many slopes which are all integers. For rational numbers $r\leq t < t' \leq r'$, the difference of the right and left derivatives of $\widetilde{a}_f(\overline{\chi}, \cdot)$ at $t$ and $t'$ respectively is
\begin{equation}
	\label{Variation-Coeffs-finis 2}
\frac{d}{dt}\widetilde{a}_f(\overline{\chi}, t+) - \frac{d}{dt}\widetilde{a}_f(\overline{\chi}, t'-)=\widetilde{\sw}_f^{\beta}(\overline{\chi}, [t, t']).
\end{equation}
\end{cor}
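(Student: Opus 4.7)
The plan is to reduce the statement directly to Theorem \ref{Variation} by lifting $\overline{\chi}$ through the Cartan homomorphism. By \cite[16.1, Théorème 33]{Serre2} (already invoked in \ref{Coeffs-finis}), the map $d_G : R_{\Lambda}(G) \to R_{\overline{\Lambda}}(G)$ is surjective, so I would choose a preimage $\chi \in R_{\Lambda}(G) \subset R_{\overline{\bQ}_{\ell}}(G)$ of $\overline{\chi}$ once and for all.

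Next, I would apply Theorem \ref{Variation} to this $\chi$: the assumption that every quotient $X/H$ has trivial canonical sheaf is carried over verbatim from the hypotheses of the corollary. This immediately yields that the function $t \mapsto \widetilde{a}_f(\chi, t)$ on $[r, r'] \cap \bQ_{\geq 0}$ is continuous and piecewise linear with finitely many slopes, all of which are integers, and that for rational numbers $r \leq t < t' \leq r'$, the identity
\begin{equation*}
\frac{d}{dt}\widetilde{a}_f(\chi, t+) - \frac{d}{dt}\widetilde{a}_f(\chi, t'-)=\widetilde{\sw}_f^{\beta}(\chi, [t, t'])
\end{equation*}
holds.

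Finally, I would invoke the defining identities \eqref{Coeffs-finis 1}, namely $\widetilde{a}_f(\overline{\chi}, t) = \widetilde{a}_f(\chi, t)$ and $\widetilde{\sw}_f^{\beta}(\overline{\chi}, [t, t']) = \widetilde{\sw}_f^{\beta}(\chi, [t, t'])$, together with the independence of these quantities of the chosen preimage (as noted in \ref{Coeffs-finis}, justified in the same way as \cite[7.18]{VCS}). Translating the conclusions of Theorem \ref{Variation} through these equalities gives precisely the two assertions of the corollary.

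There is no genuine obstacle: the entire substance lies in Theorem \ref{Variation} (continuity, piecewise linearity, integrality of slopes, and the slope-variation formula) and in the well-definedness of the two quantities in \eqref{Coeffs-finis 1} via the Cartan map. The corollary is thus a formal transfer of these conclusions from characteristic-zero coefficients to finite coefficients.
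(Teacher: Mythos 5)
Your proof is correct and matches the paper's (implicit) argument: the corollary is exactly the formal transfer of Theorem \ref{Variation} through the definition \eqref{Coeffs-finis 1}, relying on the surjectivity of the Cartan homomorphism and the independence of the chosen lift noted in \ref{Coeffs-finis}. Nothing further is needed.
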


\begin{rem}
	\label{Convexity-Expected}
Assume that $f: X\to C$ is étale. Then, so is $f_H: X/H\to C$ for any subgroup $H$ of $G$ and thus the "trivial canonical sheaf" hypothesis of \ref{Variation} is satisfied. Moreover, by \ref{Discriminant-Convex} and \ref{DiscVar}, for any subgroup $H$ of $G$, the function $\widetilde{a}_f(\bQ[G/H], \cdot)$ is convex. However, this convexity statement is not enough to prove the convexity of $\widetilde{a}_f(\chi, \cdot)$ (resp. $\widetilde{a}_f(\overline{\chi}, \cdot)$) for all $\chi\in R_{\overline{\bQ}_{\ell}}(G)$ (resp. $\overline{\chi}\in R_{\overline{\Lambda}}(G)$) (in the routine manner in which \ref{Var-Reg} implies \ref{Variation}). To achieve such a general convexity result, we establish, in what follows, a projective realization of $-\widetilde{\sw}_f^{\beta}-\widetilde{\sw}_f^{\prime\beta}$. 
\end{rem}

\subsection{}\label{DimensionOfFixedPart}
If $C^{\bullet}$ is a complex of $\bQ_{\ell}[G]$-modules and $H$ is a subgroup of $G$, then $(C^{\bullet})^H$ denotes the complex of $\bQ_{\ell}$-vector spaces whose $i$-th term is $(C^{i})^H$. The classical identity
\begin{equation}
	\label{DimensionOfFixedPart 1}
\langle V, \bQ_{\ell}[G/H]\rangle_G = \dim_{\bQ_{\ell}}(V^H),
\end{equation}
where $V$ is any finite dimensional $\bQ_{\ell}$-representation of $G$, extends straightforwardly to the identity
\begin{equation}
	\label{DimensionOfFixedPart 2}
\langle [C^{\bullet}], \bQ_{\ell}[G/H]\rangle_G=\chi((C^{\bullet})^H),
\end{equation}
where $C^{\bullet}$ is any perfect complex of $\bQ_{\ell}[G]$-modules.

\begin{lem}
	\label{NearbyCycles-QuotientMap}
We use the notation of \ref{Nearby Cycles Lutke} and its proof, but write $Y$ instead $Y'$. 
Let $H$ be a subgroup of $G$, denote by $Y_H$ the quotient of $Y$ by $H$, by $\phi: Y\to Y_H$ the quotient morphism and by $\ox'_1, \ldots, \ox'_{N'}$ the images of $\ox_1, \ldots, \ox_N$ by $\phi$. 
\begin{itemize}
\item[(i)] If $j=1, \ldots, N$ and $H\subset \St_{\ox_j}$, we have an isomorphism in the derived category $D(\bQ_\ell)$ of $\bQ_\ell$-vector spaces
\begin{equation}
	\label{NearbyCycles-QuotientMap 1}
R\Psi_{Y_H/S}(\bQ_{\ell})_{\phi(\ox_j)} \xrightarrow{\sim} \left(R\Psi_{Y/S}(\bQ_{\ell})_{\ox_j}\right)^H .
\end{equation}
\item[(ii)] If the subgroup $H$ is abelian, then we have a canonical identification
\begin{equation}
	\label{NearbyCycles-QuotientMap 2}
\bigoplus_{j=1}^{N'} R\Psi_{Y_H/S}(\bQ_{\ell})_{\ox'_j} \cong \left(\bigoplus_{j=1}^N R\Psi_{Y/S}(\bQ_{\ell})_{\ox_j}\right)^H.
\end{equation}
\end{itemize}
\end{lem}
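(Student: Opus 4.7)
The plan is to adapt the construction of \eqref{Stalk-of-Nearby-Cycles-Complexes 1}--\eqref{Stalk-of-Nearby-Cycles-Complexes 2} to the quotient morphism $\phi : Y \to Y_H$ (in place of $Y \to Y/G$), then take $H$-invariants and evaluate at the relevant stalks.

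Since $H$ acts on $Y$ by $S$-automorphisms, $\phi$ is a finite morphism of $S$-schemes. As $G$---and hence $H$---acts freely on $Y_\eta$, the restriction $\phi_\eta$ is a finite étale Galois cover of group $H$, so $\phi_{\eta*}\bQ_\ell$ is locally constant on $(Y_H)_\eta$ with stalks isomorphic to the regular representation $\bQ_\ell[H]$, whose $H$-invariants identify with the constant sheaf $\bQ_\ell$. Because $\bQ_\ell[H]$ is semisimple, the functor $(-)^H$ is exact on $\bQ_\ell[H]$-modules and thus commutes with stalks and with $R\Psi$. Applying $(-)^H$ to the proper base change isomorphism $R\Psi_{Y_H/S}(\phi_{\eta*}\bQ_\ell) \xrightarrow{\sim} \phi_{s*} R\Psi_{Y/S}(\bQ_\ell)$ therefore produces the key intermediate identity
\[
R\Psi_{Y_H/S}(\bQ_\ell) \xrightarrow{\sim} \bigl(\phi_{s*} R\Psi_{Y/S}(\bQ_\ell)\bigr)^H.
\]

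The next step is to evaluate stalks. For the finite morphism $\phi_s$ and a complex $\cF^\bullet$ on $Y$, one has $(\phi_{s*}\cF^\bullet)_{\ox'} \cong \bigoplus_{\ox \in \phi^{-1}(\ox')}\cF^\bullet_{\ox}$, on which $H$ acts by permuting the summands via its action on $Y$ combined with the $H$-equivariant structure of $\cF^\bullet$. For (i), the assumption $H \subset \St_{\ox_j}$ forces $\phi^{-1}(\phi(\ox_j)) = H\cdot \ox_j = \{\ox_j\}$, so the stalk collapses to $R\Psi_{Y/S}(\bQ_\ell)_{\ox_j}$ with $H$-action inherited from the inclusion $H \subset \St_{\ox_j}$; taking $H$-invariants yields \eqref{NearbyCycles-QuotientMap 1}. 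For (ii), since $\hf$ is $G$-equivariant, the set $\{\ox_1,\ldots,\ox_N\} = \hf^{-1}(o)$ is $G$-stable, so the fibres $\phi^{-1}(\ox'_j)$ are precisely the $H$-orbits in this set and form a partition. Summing the stalk formula over $j$ and commuting $(-)^H$ past the finite direct sum yields \eqref{NearbyCycles-QuotientMap 2}. The hypothesis that $H$ is abelian is what makes all stabilizers $H\cap \St_{\ox_i}$ along a single $H$-orbit literally equal, not merely conjugate, so that the identification is canonical in a representative-independent way.

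The main obstacle is justifying each of these manipulations at the $\ell$-adic level of \ref{Z_ell-Coeffs}: the complexes involved are defined as inverse limits of perfect complexes of $\bZ/\ell^n\bZ[\St_{\ox_j}]$-modules via \eqref{Z_ell-Coeffs 2}, so one must check compatibility of the torsion-level proper base change isomorphism with the transition maps and with $H$-invariants. At the torsion level $(-)^H$ may fail to be exact when $\ell \mid |H|$, but this obstruction disappears after inverting $\ell$, where $\bQ_\ell[H]$ becomes semisimple and all the preceding arguments apply.
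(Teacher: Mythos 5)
Your overall strategy is the same as the paper's: compare $R\Psi_{Y_H/S}$ with the $H$-invariants of $R\Psi_{Y/S}$ via the finite quotient map $\phi$, proper base change, the identification $(\phi_{\eta\ast}\Lambda)^H\cong\Lambda$ coming from freeness of the action on $Y_\eta$, and then the fibre/orbit decomposition of stalks (with abelianness of $H$ ensuring a single stabilizer $H_j$ along each orbit for part (ii)). However, there is a genuine gap exactly at the step you defer to the end. In this paper $R\Psi_{Y/S}(\bQ_\ell)_x$ is \emph{not} the stalk of a $\bQ_\ell$-sheaf-theoretic nearby cycles complex: it is defined in \ref{Z_ell-Coeffs}--\ref{l-adic NearbyCycles} as $(\varprojlim_n P_n^{\bullet})\otimes_{\bZ_\ell}\bQ_\ell$, a limit of torsion-level perfect complexes of $\bZ/\ell^n\bZ[\St_x]$-modules. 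So the manipulation you build everything on --- applying $(-)^H$ to a $\bQ_\ell$-coefficient proper base change isomorphism and commuting $(-)^H$ with $R\Psi$ by semisimplicity of $\bQ_\ell[H]$ --- is not available as stated; all sheaf-level arguments must be carried out with $\Lambda_n=\bZ/\ell^n\bZ$ coefficients, where $(-)^H$ is not exact when $\ell\mid |H|$, and only afterwards can one take the limit and invert $\ell$. Your closing claim that ``this obstruction disappears after inverting $\ell$'' is precisely what has to be proved: $\ell$ is inverted only after the inverse limit, so the discrepancy between naive and derived $H$-invariants, and its compatibility with the transition maps of \eqref{Z_ell-Coeffs 2}, must be controlled at each finite level before passing to the limit.

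Concretely, the missing content is the torsion-level identity $R\Psi_{Y_H/S}(\Lambda_n)_{\phi(\ox_j)}\cong R\Gamma^H\bigl(R\Psi_{Y/S}(\Lambda_n)_{\ox_j}\bigr)$ together with a reason why the \emph{derived} invariants can be computed termwise and commute with $\varprojlim_n$ and with $\otimes\,\bQ_\ell$. The paper gets this from: (a) the isomorphism $\Lambda_n\xrightarrow{\sim}R\underline{\Gamma}^H\phi_{\eta\ast}\Lambda_n$, valid because the stalks of $\phi_{\eta\ast}\Lambda_n$ are induced $\Lambda_n[H]$-modules (free action on $Y_\eta$), hence cohomologically trivial; (b) the commutation $R\Gamma\circ R\underline{\Gamma}^H=R\Gamma^H\circ R\Gamma$; (c) the fact that $R\Psi_{Y/S}(\Lambda_n)_{\ox_j}$ is a perfect complex of $\Lambda_n[H]$-modules of amplitude $[0,1]$ (\ref{Projective-TorsionCoeffs}(i)) and that $\Lambda_n[H]$ is self-injective, so its projective terms are injective and $R\Gamma^H$ is computed by naive termwise invariants $(P_n^{\bullet})^H$; and (d) a Mittag--Leffler argument ($(P_n^i)^H$ finite) to commute invariants with the inverse limit before tensoring with $\bQ_\ell$, where semisimplicity finally identifies derived with naive invariants. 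Without (a)--(d), or some substitute such as a uniform torsion bound on the naive-versus-derived discrepancy compatible with the transition maps, your argument does not go through in the framework in which the statement is formulated; as written, the appeal to semisimplicity of $\bQ_\ell[H]$ begs the question.
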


\begin{proof}
We put $\ox'_j=\phi(\ox_j)$ and $\Lambda_n=\bZ/\ell^n\bZ$, for $n\geq 1$.\newline
(i) ~ The functor $\underline{\Gamma}^H: \cF\mapsto \cF^H$ from the category of sheaves of $\Lambda_n[H]$-modules to the category of sheaves of $\Lambda_n$-modules, both on $(Y_H)_{(\ox'_j)}\times_S\overline{\eta}$, is left-exact and derived as
\begin{equation}
	\label{NearbyCycles-QuotientMap 3}
R\underline{\Gamma}^H: D^+((Y_H)_{(\ox'_j)}\times_S\overline{\eta}, \Lambda_n[H])\to D^+((Y_H)_{(\ox'_j)}\times_S\overline{\eta}, \Lambda_n).
\end{equation} 
Similarly, the functor $\Gamma^H: M\mapsto M^H$ on the category of $\Lambda_n[H]$-modules is derived as 
\begin{equation}
	\label{NearbyCycles-QuotientMap 4}
R\Gamma^H : D^+(\Lambda_n[H])\to D^+(\Lambda_n).
\end{equation} 
We note that, as $\Lambda_n$ is a self-injective module (Baer's criterion) and $H$ a finite group, the (left) Noetherian ring $\Lambda_n[H]$ is also self-injective \cite[Cor.9']{EN55}; thus every finitely generated and projective (left) $\Lambda_n[H]$-module is injective \cite[Theorem 15]{EN55}.  Since $R\Psi_{Y/S}(\Lambda_n)_{\ox_j}$ is perfect of amplitude in $[0, 1]$ as a complex of $\Lambda_n[H]$-modules (\ref{Projective-TorsionCoeffs}(i)),  it is quasi-isomorphic to a complex $P_n^{\bullet}$ of finitely generated and projective $\Lambda_n[H]$-modules, concentrated in degrees $0$ and $1$,  and it follows that
\begin{equation}
	\label{NearbyCycles-QuotientMap 5}
(P_n^{\bullet})^H \xrightarrow{\cong} R\Gamma^H(R\Psi_{Y/S}(\Lambda_n)_{\ox_j}).
\end{equation}
Likewise, since $\phi_{\eta\ast}\Lambda_n$ is a sheaf of finitely generated and projective $\Lambda_n[H]$-modules (in fact, locally free, see \eqref{Projective-TorsionCoeffs 3}), the canonical map
\begin{equation}
	\label{NearbyCycles-QuotientMap 6}
\Lambda_n \to R\underline{\Gamma}^H \phi_{\eta\ast}\Lambda_n
\end{equation}
is an isomorphism of sheaves of $\Lambda_n$-modules on $(Y_H)_{(\ox'_j)}\times_S\overline{\eta}$.
Moreover, as the functor sending a $\Lambda_n$-sheaf $\cG$ to $\cG$ endowed with the trivial action is exact and left-adjoint to $\underline{\Gamma}^H$, it follows that $\underline{\Gamma}^H$ preserves injective objects and thus $R(\Gamma\circ\underline{\Gamma}^H)=R\Gamma\circ R\underline{\Gamma}^H$, where $\Gamma$ is the global sections functor on the category of $\Lambda_n$-sheaves on $(Y_H)_{(\ox'_j)}\times_S\overline{\eta}$ \cite[6.3.3]{Fu}. Likewise, since any $H$-equivariant direct image functor preserves injective $H$-sheaves (as right-adjoint to the corresponding exact pull-back functor), so does the global sections functor $\Gamma$ on the category of sheaves of $\Lambda_n[H]$-modules, and thus $R(\Gamma^H\circ\Gamma)=R\Gamma^H\circ\Gamma$. As we clearly have $\Gamma\circ\underline{\Gamma}^H=\Gamma^H\circ \Gamma$, it follows that $R\Gamma\circ R\underline{\Gamma}^H=R\Gamma^H\circ R\Gamma$.  Now, applying the derived functor $R\Gamma: D((Y_H)_{(\ox'_j)}\times_S\overline{\eta}, \Lambda_n)\to D(\Lambda_n)$ to \eqref{NearbyCycles-QuotientMap 6}, yields
\begin{equation}
	\label{NearbyCycles-QuotientMap 7}
R\Psi_{Y_H/S}(\Lambda_n)_{\ox'_j} \xrightarrow{\cong} R\Gamma^H(R\Gamma((Y_H)_{(\ox'_j)}\times_S\overline{\eta}, \phi_{\eta\ast}\Lambda_n))\cong R\Gamma^H(R\Gamma(Y_{(\ox_j)}\times_S\overline{\eta}, \Lambda_n).
\end{equation}
Hence,  with \eqref{NearbyCycles-QuotientMap 5},  we get
\begin{equation}
	\label{NearbyCycles-QuotientMap 8}
R\Psi_{Y_H/S}(\Lambda_n)_{\ox'_j} \xrightarrow{\cong} (P_n^{\bullet})^H.
\end{equation}
For $i=0,  1$,  every $P_n^{i}$ is a finite set,  being finitely generated over the finite ring $\Lambda_n[H]$; hence,  $(P_n^i)^H$ is also finite. Therefore,  the inverse system $((P_n^i)^H)_n$  is Mittag-Leffler,  and thus $R^1\varprojlim_n (P_n^i)^H=0$ \cite[1.15]{Jannsen}.  Then,  with \eqref{NearbyCycles-QuotientMap 8},  we have
\begin{equation}
	\label{NearbyCycles-QuotientMap 9}
\begin{split}
R\Psi_{Y_H/S}(\bQ_{\ell})_{\phi(\ox_j)} & \xrightarrow{\sim} \left( R\varprojlim_n R\Psi_{Y_H/S}(\Lambda_n)_{\phi(\ox_j)}\right)\otimes_{\bZ_\ell}^{\bL}\bQ_\ell \cong (R\varprojlim_n (P_n^{\bullet})^H)\otimes_{\bZ_\ell}\bQ_\ell \\
& \cong (\varprojlim_n (P_n^{\bullet})^H)\otimes_{\bZ_\ell}\bQ_\ell \cong (\varprojlim_n P_n^{\bullet}\otimes_{\bZ_\ell}\bQ_\ell )^H \cong \left(R\Psi_{Y/S}(\bQ_\ell)_{\ox_j}\right)^H,
\end{split}
\end{equation}
where the second-to-last isomorphism is the commutation of the functors $\varprojlim_n$ and $\Gamma^H$ and the last isomorphism is the definition \eqref{Z_ell-Coeffs 3} (the last term is well-defined because $\Gamma^H$ is exact for $\bQ_{\ell}$-coefficients).
\newline

\noindent (ii) ~ Let $j=1, \ldots, N'$. Since $\phi^{-1}(\ox'_j)$ is an orbit of $H$ (under the action of $G$), the direct sum of the complexes $R\Psi_{Y/S}(\bQ_\ell)_{\ox}$, over $\ox\in \phi^{-1}(\ox'_j)$ is stable under $H$. Let $H_j$ be the stabilizer of an element of $\phi^{-1}(\ox'_j)$, which is also the stabilizer of every such element as $H$ is abelian. It follows from \eqref{NearbyCycles-QuotientMap 1} that
\begin{equation}
	\label{NearbyCycles-QuotientMap 10}
\left(\bigoplus_{\ox\in\phi^{-1}(\ox'_j)} R\Psi_{Y/S}(\bQ_\ell)_{\ox}\right)^{H_j} \cong \bigoplus_{\ox\in\phi^{-1}(\ox'_j)} R\Psi_{Y_{H_j}/S}(\bQ_\ell)_{\phi_j(\ox)},
\end{equation}
where $\phi_j$ is the quotient morphism $Y\to Y/H_j=Y_{H_j}$. 
The quotient group $H/H_j$ acts on the right-hand-side sum in \eqref{NearbyCycles-QuotientMap 10} by permuting the summands.
Therefore, we indeed have the identification
\begin{equation}
	\label{NearbyCycles-QuotientMap 11}
\begin{split}
\left(\bigoplus_{j=1}^N R\Psi_{Y/S}(\bQ_{\ell})_{\ox_j}\right)^H & \cong \left(\bigoplus_{j=1}^{N'} \bigoplus_{\ox\in\phi^{-1}(\ox'_j)} R\Psi_{Y/S}(\bQ_{\ell})_{\ox_j}\right)^H \\
& \cong \bigoplus_{j=1}^{N'} \left(\bigoplus_{\ox\in\phi^{-1}(\ox'_j)} R\Psi_{Y_{H_j}/S}(\bQ_\ell)_{\phi_j(\ox)}\right)^{H/H_j}
\cong \bigoplus_{j=1}^{N'} R\Psi_{Y_H/S}(\bQ_{\ell})_{\ox'_j}.
\end{split}
\end{equation}
\end{proof}

\begin{lem}
	\label{Realization-NearbyCycles-Swan-Regular-Pairing}
We use the notation \ref{Nearby Cycles Lutke} and its proof. We assume that the morphism $f: X\to C$ is étale.
For every abelian subgroup $H$ of $G$, we have the identity
\begin{equation}
	\label{Realization-NearbyCycles-Swan-Regular-Pairing 1}
\langle [\bigoplus_{j=1}^N R\Psi_{Y/S}(\bQ_{\ell})_{\ox_j}], \bQ[G/H]\rangle=\langle\widetilde{\sw}_f^{\beta} + \widetilde{\sw}_f^{\prime\beta}, \bQ[G/H]\rangle.
\end{equation}
\end{lem}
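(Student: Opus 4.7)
The plan is to rewrite both sides as sums indexed by the geometric points of $(\fX/H)_{s'}$ lying above $o$, and then match them term-by-term using the Euler characteristic computation underlying \ref{Nearby Cycles Lutke}.

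First I would apply \eqref{DimensionOfFixedPart 2} to the perfect complex $\bigoplus_{j=1}^N R\Psi_{Y/S'}(\bQ_\ell)_{\ox_j}$ of $\bQ_\ell[G]$-modules---perfect by \ref{Projective-TorsionCoeffs}(i) after inverting $\ell$---and then invoke \ref{NearbyCycles-QuotientMap}(ii), applicable because $H$ is abelian, to obtain
$$\langle [\bigoplus_{j=1}^N R\Psi_{Y/S'}(\bQ_\ell)_{\ox_j}], \bQ[G/H]\rangle = \chi\Bigl(\bigl(\bigoplus_{j=1}^N R\Psi_{Y/S'}(\bQ_\ell)_{\ox_j}\bigr)^H\Bigr) = \sum_{j=1}^{N'}\chi(R\Psi_{Y_H/S'}(\bQ_\ell)_{\ox'_j}),$$
where $\ox'_1, \ldots, \ox'_{N'}$ are the geometric points of $(\fX/H)_{s'}$ above $o$ and $Y_H = Y/H$ is identified with the algebraization-compactification constructed as in \eqref{Nearby Cycles Lutke 6} for $f_H : X/H \to C$.

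Since $\cO(X)^G \cong \cO(C)$ and $f$ is étale, $f$ is an étale $G$-torsor over $C$ and so $f_H$ is étale as well; the hypotheses of \ref{Nearby Cycles Lutke} therefore hold for $f_H$ (the canonical sheaf of $X/H$ is pulled back from $C$, and the decomposition is inherited from the one for $f$ up to possibly enlarging $K'$). Repeating the Euler characteristic computation of that proof---with $R^0\Psi_{Y_H/S'}(\bQ_\ell) \cong \bQ_\ell$, $R^q\Psi_{Y_H/S'}(\bQ_\ell) = 0$ for $q > 1$, and the $\bQ_\ell$-analogue of \eqref{Nearby Cycles Lutke 22} obtained from the inverse-limit description \eqref{Z_ell-Coeffs 3}---yields
$$\chi(R\Psi_{Y_H/S'}(\bQ_\ell)_{\ox'_j}) = 1 - \bigl(2\delta(B^H_j) - |P_s(B^H_j)| + 1\bigr) = -2\delta(B^H_j) + |P_s(B^H_j)|,$$
where $B^H_j = \cO_{(\fX/H)_{K'}, \ox'_j}$. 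Summing over $j$ gives a closed expression for the left-hand side of \eqref{Realization-NearbyCycles-Swan-Regular-Pairing 1}.

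For the right-hand side, equation \eqref{DiscVar 17} in the proof of \ref{DiscVar}, applied to $f_H$, reads
$$\langle \widetilde{\sw}_f^\beta + \widetilde{\sw}_f^{\prime\beta}, \bQ[G/H]\rangle = \sum_{j=1}^{N'} \bigl(d_\eta(B^H_j/A) - 2\delta(B^H_j) + |P_s(B^H_j)|\bigr),$$
and \ref{AnnulationDe-d-Eta} forces $d_\eta(B^H_j/A) = 0$ since $f_H$ is étale. The two sums then coincide, which closes the argument. The main technical point to verify carefully is the identification $Y/H = Y_H$: one needs to check that the gluing \eqref{Nearby Cycles Lutke 6} can be performed $G$-equivariantly, so that $G$ (and hence $H$) permutes the boundary annuli $\Delta_j, \Delta'_j$ of \eqref{Nearby Cycles Lutke 1} together with compatible $G$-actions on the attached formal discs $\fD_j, \fD'_j$, whence the quotient by $H$ descends to the analogous construction for $f_H$. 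A minor secondary verification is the transfer of \eqref{Nearby Cycles Lutke 22} from $\bZ/\ell^n\bZ$-coefficients to $\bQ_\ell$-coefficients, which is formal from \eqref{Z_ell-Coeffs 3} and the length-normalization of Euler characteristics in \eqref{Euler-Characteristic-Complex 1}.
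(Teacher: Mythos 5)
Your argument is correct and follows essentially the same route as the paper's proof: express the left-hand side as $\chi\bigl(\bigoplus_j R\Psi_{Y_H/S'}(\bQ_\ell)_{\ox'_j}\bigr)$ via \eqref{DimensionOfFixedPart 2} and \ref{NearbyCycles-QuotientMap}, compute each stalk's Euler characteristic by Kato's formula \eqref{Nearby Cycles Lutke 22}, and match it with $\sum_j\bigl(d_\eta(B_j/A)-2\delta(B_j)+\lvert P_s(B_j)\rvert\bigr)$ for $f_H$, where $d_\eta$ vanishes by \ref{AnnulationDe-d-Eta}. Your only deviation is to quote \eqref{DiscVar 17} directly for the right-hand side instead of re-deriving it from \eqref{DiscVar 3}, \eqref{Variation-Lutke 1} and \eqref{Nearby Cycles Lutke 2} as the paper does, which is a harmless (indeed slightly more economical) shortcut, and the technical points you flag (the $H$-equivariant identification $Y/H=Y_H$ and the passage from torsion to $\bQ_\ell$-coefficients) are exactly the identifications the paper also uses implicitly.
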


\begin{proof}
From \eqref{DiscVar 3}, \eqref{Variation-Lutke 1} and \eqref{Nearby Cycles Lutke 2}, we see that the right-hand-side of \eqref{Realization-NearbyCycles-Swan-Regular-Pairing 1} is the sum $\sum_j (d_{\eta}(B_j/A) - 2\delta(B_j) + \lvert P_s(B_j)\lvert$, where the local rings $A$ and $B_j$ are the appropriate formal étale local rings associated to the normalized integral model of the morphism $X/H\to C$. On the one hand, as $X\to C$ is étale, so is $X/H\to C$ and thus all the $d_{\eta}(B_j/A)$ vanish \eqref{AnnulationDe-d-Eta}. On the other hand, as the Milnor tube $(Y_H)_{(\ox'_j)}\times_S\overline{\eta}$ is connected, we have  
$\dim_{\bQ_{\ell}} R^0_{Y_H/S}(\bQ_{\ell})_{\ox'_j}=1$; moreover, Kato's formula \eqref{Nearby Cycles Lutke 22}, applied to $Y_H\to S$, gives
\begin{equation}
	\label{Realization-NearbyCycles-Swan-Regular-Pairing 2}
\dim_{\bQ_{\ell}} R^1\Psi_{Y_H/S}(\bQ_{\ell})_{\ox'_j}=2\delta(B_j)-\lvert P_s(B_j)\lvert +1.
\end{equation}
Note that \eqref{Realization-NearbyCycles-Swan-Regular-Pairing 2} was formulated with $\bF_\ell$-coefficients on its left-hand-side; but  as $[R\Psi_{Y_H/S}(\bF_{\ell})_{\ox'_j}]$ is identified with $[R\Psi_{Y_H/S}(\bQ_{\ell})_{\ox'_j}]$ through the direct injection $e: P_{\bF_\ell}(\St_{x'_j}) \to R_{\bQ_\ell}(\St_{x'_j})$ \cite[16.1, Théorème 34]{Serre2},  we have $\chi(R\Psi_{Y_H/S}(\bF_{\ell})_{\ox'_j})=\chi(R\Psi_{Y_H/S}(\bQ_{\ell})_{\ox'_j})$,  which implies \eqref{Realization-NearbyCycles-Swan-Regular-Pairing 2}. \newline
It follows that the right-hand-side of \eqref{Realization-NearbyCycles-Swan-Regular-Pairing 1} is also the Euler characteristic of the complex of $\bQ$-vector spaces $\bigoplus_{j=1}^{N'} R\Psi_{Y_H/S}(\bQ_{\ell})_{\ox'_j}$
\begin{equation}
	\label{Realization-NearbyCycles-Swan-Regular-Pairing 3}
\langle\widetilde{\sw}_f^{\beta} + \widetilde{\sw}_f^{\prime\beta}, \bQ[G/H]\rangle= \chi\left(\bigoplus_{j=1}^{N'} R\Psi_{Y_H/S}(\bQ_{\ell})_{\ox'_j}\right).
\end{equation}
Then, \eqref{Realization-NearbyCycles-Swan-Regular-Pairing 1} follows from \eqref{Realization-NearbyCycles-Swan-Regular-Pairing 3}, \eqref{NearbyCycles-QuotientMap 1} and \eqref{DimensionOfFixedPart 2}.
\end{proof}

\begin{thm}
	\label{Realization-NearbyCycles-Swan}
We use the notation \ref{Nearby Cycles Lutke} and its proof.
We have the following identity in $\overline{R}_{\bQ}(G)$
\begin{equation}
	\label{Realization-NearbyCycles-Swan 1}
[\bigoplus_{j=1}^N R\Psi_{Y/S}(\bQ_{\ell})_{\ox_j}]=\widetilde{\sw}_f^{\beta} + \widetilde{\sw}_f^{\prime\beta}. 
\end{equation}
\end{thm}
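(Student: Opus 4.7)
The plan is to deduce \eqref{Realization-NearbyCycles-Swan 1} from Lemma \ref{Realization-NearbyCycles-Swan-Regular-Pairing} and Theorem \ref{Rationality-of-NearbyCycles} by means of Artin's induction theorem in the form \cite[13.1, Théorème 30]{Serre2}. The substantial geometric content has been absorbed into those two prior results; the present task is to package them together.

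First, I would verify that both sides of \eqref{Realization-NearbyCycles-Swan 1} actually lie in $\overline{R}_{\bQ}(G)$. By \ref{ActionTransitive-ExtenionsGaloisiennes-RevetementQuotient}(1), the group $G$ acts transitively on $\{\ox_1,\ldots,\ox_N\}$, so the left-hand side is identified in $R_{\bQ_\ell}(G)$ with the induced class $\Ind_{\St_{\ox_1}}^{G}[R\Psi_{Y/S}(\bQ_\ell)_{\ox_1}]$. Theorem \ref{Rationality-of-NearbyCycles} places $[R\Psi_{Y/S}(\bQ_\ell)_{\ox_1}]$ in $\overline{R}_{\bQ}(\St_{\ox_1})$, and induction preserves the property of having rational coefficients. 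The right-hand side is a $\bQ$-valued class function on $G$ by construction \eqref{Normalized Conductors 1}, obtained from local Swan characters attached to the stabilizers $G_\tau$, $G'_\tau$, which are virtual characters with rational values.

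Second, by Artin's theorem, the induced characters $\bQ[G/C]=\Ind_C^{G}\mathbf{1}_C$ with $C$ ranging over the cyclic subgroups of $G$ span $R_{\bQ}(G)\otimes_{\bZ}\bQ$ as a $\bQ$-vector space. Since the standard pairing $\langle\cdot,\cdot\rangle$ is non-degenerate on $R_{\bQ}(G)\otimes \bQ$, an element of $\overline{R}_{\bQ}(G)$ is determined by its pairings with the $\bQ[G/C]$, $C$ cyclic. It therefore suffices to check, for every cyclic subgroup $C\leq G$, the identity
\begin{equation*}
\langle [\textstyle\bigoplus_{j=1}^{N} R\Psi_{Y/S}(\bQ_\ell)_{\ox_j}],\, \bQ[G/C]\rangle = \langle \widetilde{\sw}_f^{\beta}+\widetilde{\sw}_f^{\prime\beta},\, \bQ[G/C]\rangle.
\end{equation*}

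Third, every cyclic subgroup is in particular abelian, and so Lemma \ref{Realization-NearbyCycles-Swan-Regular-Pairing} applied with $H=C$ yields precisely this identity. Combined with the two preceding steps, this gives \eqref{Realization-NearbyCycles-Swan 1}. The main obstacle in the argument is entirely upstream, concentrated in Theorem \ref{Rationality-of-NearbyCycles}: without the rationality statement, the Artin reduction is not available and one only has a pairing-by-pairing equality against $\bQ[G/H]$ for $H$ abelian, which by itself is insufficient to force equality in $\overline{R}_{\bQ}(G)$. Granting rationality, the deduction is formal.
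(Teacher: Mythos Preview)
Your proposal is correct and follows essentially the same approach as the paper: both apply \cite[\S 13.1, Théorème 30]{Serre2} to the difference $\theta=[\bigoplus_j R\Psi_{Y/S}(\bQ_\ell)_{\ox_j}]-(\widetilde{\sw}_f^{\beta}+\widetilde{\sw}_f^{\prime\beta})$, using \ref{Rationality-of-NearbyCycles} to place $\theta$ in $\overline{R}_{\bQ}(G)$ and \ref{Realization-NearbyCycles-Swan-Regular-Pairing} (with $H$ cyclic, hence abelian) to kill its pairings against all $\bQ[G/C]$. Your write-up is simply a more explicit unpacking of the paper's one-line proof.
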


\begin{proof}
The theorem follows from \ref{Rationality-of-NearbyCycles}, \ref{Realization-NearbyCycles-Swan-Regular-Pairing}, and \cite[\S 13.1, Théorème 30]{Serre2} applied to
\begin{equation}
	\label{Realization-NearbyCycles-Swan 2}
\theta=[\bigoplus_{j=1}^N R\Psi_{Y/S}(\bQ_{\ell})_{\ox_j}] - (\widetilde{\sw}_f^{\beta} + \widetilde{\sw}_f^{\prime\beta}) ~ \in  \overline{R}_{\bQ}(G).
\end{equation}
\end{proof}

\begin{cor}
	\label{Convexity}
Assume that $f: X\to C$ is étale. If $\rho$ is a representation of $G$ with coefficients in $\overline{\bQ}_{\ell}$ $($resp. $\overline{\Lambda})$ and $\chi_{\rho}$ its class in $R_{\overline{\bQ}_{\ell}}(G)$ $($resp. $R_{\overline{\Lambda}}(G))$, then the function $\widetilde{a}_f(\chi_{\rho}, \cdot)$ is convex.
\end{cor}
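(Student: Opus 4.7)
The plan is to deduce convexity from the variation formula of Theorem \ref{Variation} together with the projective realization of Theorem \ref{Realization-NearbyCycles-Swan}. Since $f$ is étale, every quotient morphism $f_H: X/H \to C$ is also étale; the annulus $C$ has trivial canonical sheaf, hence so does each $X/H$, so the hypothesis of Theorem \ref{Variation} is satisfied. By Corollary \ref{Variation-Coeffs-finis}, the function $\widetilde{a}_f(\chi_\rho, \cdot)$ is continuous and piecewise linear with integer slopes, so its convexity is equivalent to the inequality
\[
\frac{d}{dt}\widetilde{a}_f(\chi_\rho, t+) - \frac{d}{dt}\widetilde{a}_f(\chi_\rho, t'-) \leq 0
\]
for all $r \leq t < t' \leq r'$. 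By \eqref{Variation 1} (or \eqref{Variation-Coeffs-finis 2} in the modular case), the left-hand side equals $\widetilde{\sw}_f^\beta(\chi_\rho, [t, t'])$, so it suffices to show that this quantity is non-positive.

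I would apply Theorem \ref{Realization-NearbyCycles-Swan} to the restriction $f^{[t,t']}$, which is still étale. Denoting by $Y/S'$ the algebraized compactification from the proof of Proposition \ref{Nearby Cycles Lutke}, this gives
\[
\widetilde{\sw}_{f^{[t,t']}}^\beta + \widetilde{\sw}_{f^{[t,t']}}^{\prime\beta} = \big[\bigoplus_j R\Psi_{Y/S'}(\bQ_\ell)_{\ox_j}\big] \quad \in \quad \overline{R}_\bQ(G),
\]
where $G$ acts on the direct sum by permuting the singular points $\ox_j$. Decomposing the sum into $G$-orbits, each orbit-sum is isomorphic to $\Ind_{\St_{\ox_j}}^G R\Psi_{Y/S'}(\bQ_\ell)_{\ox_j}$; since induction preserves projectivity, Corollary \ref{Projective-Z_l-Coeffs} applied term by term implies that $\big[\bigoplus_j R\Psi_{Y/S'}(\bQ_\ell)_{\ox_j}[1]\big]$ is the class of a genuine finitely generated projective $\bQ_\ell[G]$-module $P$. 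Hence $\widetilde{\sw}_{f^{[t,t']}}^\beta + \widetilde{\sw}_{f^{[t,t']}}^{\prime\beta} = -[P]$ in $R_{\bQ_\ell}(G)$.

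To conclude I would invoke the classical fact that, in characteristic zero, the pairing of a projective class with any representation is non-negative: for a representation $V_\rho$ of $G$ over $\overline{\bQ}_\ell$ with character $\chi_\rho$,
\[
\langle [P], \chi_\rho \rangle = \dim_{\overline{\bQ}_\ell} \Hom_G(P \otimes_{\bQ_\ell} \overline{\bQ}_\ell,\, V_\rho) \geq 0,
\]
because a projective $\bQ_\ell[G]$-module decomposes, after extension of scalars, into irreducibles with non-negative multiplicities. Therefore $\widetilde{\sw}_f^\beta(\chi_\rho, [t, t']) \leq 0$, giving convexity when $\chi_\rho \in R_{\overline{\bQ}_\ell}(G)$. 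The modular case $\chi_\rho \in R_{\overline\Lambda}(G)$ follows by lifting through the Cartan homomorphism as in \eqref{Coeffs-finis 1} and extending scalars from $\Lambda$ to $\overline{\bQ}_\ell$.

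The main obstacle was already overcome in the preceding section: the numerical identity recorded in Lemma \ref{Realization-NearbyCycles-Swan-Regular-Pairing} by itself is insufficient to guarantee that $-\widetilde{\sw}_{f^{[t,t']}}^\beta - \widetilde{\sw}_{f^{[t,t']}}^{\prime\beta}$ arises from a genuine projective representation rather than a merely virtual class with non-negative pairings against all $\bQ[G/H]$. Upgrading positivity of pairings to an honest projective realization is precisely what the combination of the $\bQ_\ell$-projectivity of nearby-cycle stalks (Corollary \ref{Projective-Z_l-Coeffs}), the rationality Theorem \ref{Rationality-of-NearbyCycles}, and Serre's induction theorem \cite[\S 13.1, Théorème 30]{Serre2} accomplishes. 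Once Theorem \ref{Realization-NearbyCycles-Swan} is at hand, the convexity corollary reduces to the elementary pairing computation above.
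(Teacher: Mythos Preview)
Your proof is correct and follows essentially the same approach as the paper's. The paper's proof is more terse: it simply invokes \ref{Projective-Z_l-Coeffs} and \ref{Realization-NearbyCycles-Swan} to conclude that $-\widetilde{\sw}_{f^{[t,t']}}^{\beta} - \widetilde{\sw}_{f^{[t,t']}}^{\prime\beta}$ is the class of an actual finitely generated projective $\bQ_\ell[G]$-module, then cites \cite[\S 14.5, b)]{Serre2} (the non-negativity of the pairing of a projective class with any actual module, compatibly through the cde triangle) together with \ref{Variation} and \ref{Variation-Coeffs-finis}. Your orbit decomposition and explicit $\Hom$-dimension computation spell out what the paper leaves to the Serre reference, and your closing paragraph correctly identifies why mere positivity against permutation modules would not have sufficed.
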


\begin{proof}
It follows from \ref{Projective-Z_l-Coeffs} and \ref{Realization-NearbyCycles-Swan} that for rational numbers $r\leq t< t'\leq r'$,
the $\bQ$-valued function $-\widetilde{\sw}_{f^{[t, t']}}^{\beta} - \widetilde{\sw}_{f^{[t, t']}}^{\prime\beta}$ is the class of a finitely generated and projective $\bQ_{\ell}[G]$-module. Then, the result follows from \ref{Variation} (resp. \ref{Variation-Coeffs-finis}) and \cite[\S 14.5, b)]{Serre2}.
\end{proof}

\section{Swan conductor of a lisse torsion sheaf on a rigid annulus}
\label{Faisceau-Lisse-Couronne-Fermee}
\subsection{}\label{Faisceau}
Let $\overline{\Lambda}$ be a finite field of characteristic $\ell\neq p$ and $\psi: \bF^{\times}_p\to \overline{\Lambda}^{\times}$ a nontrivial character. 
Let $0\leq r < r'$ be rational numbers and $C=A(r', r)$ the closed sub-annulus of the closed unit disc $D$ defined by $r\leq v_K(\xi)\leq r'$, where $\xi$ is the coordinate of $D$.
Let $\cF$ be a lisse sheaf of $\overline{\Lambda}$-modules on $C$. By \cite[2.10]{deJong}, $\cF$ corresponds to a connected Galois étale cover $f: X\to C$ and a continuous finite dimensional $\overline{\Lambda}$-representation $\rho_{\cF}$ of $G=\Aut(X/C)$. 
We denote by $\chi_{\cF}$ the image of $\rho_{\cF}$ in $R_{\overline{\Lambda}}(G)$.

\subsection{}\label{SW&CC}
We use the notation of \ref{Artin-Intrinseque}.
Let $r \leq t \leq r'$ be a rational number and $\overline{\q}^{(t)}$ an element of $\fX_{K'}^{[t]}(\overline{\p}^{(t)})$. The stabilizer of $\overline{\q}^{(t)}$ under the action of $G$ on $\fX_{K'}^{[t]}(\overline{\p}^{(t)})$ is isomorphic to the Galois group $G_{\overline{\q}^{(t)}}$ of a finite extension of a henselian discrete valuation field which is of type (II) over an unramified sub-extension, with residue extension $\kappa(\overline{\p}^{(t)})\to \kappa(\overline{\q}^{(t)})$.
The ramification theory of Abbes and Saito, applied to the $G_{\overline{\q}^{(t)}}$-representation $M_{\overline{\q}^{(t)}}=\rho_{\cF}\vert G_{\overline{\q}^{(t)}}$ yields a rational number $\sw_{G_{\overline{\q}^{(t)}}}^{\AS}(M_{\overline{\q}^{(t)}})$, the \textit{Swan conductor of} $M_{\overline{\q}^{(t)}}$ \cite[(8.21.2)]{VCS}, and (a power of) a logarithmic differential form $\CC_{\psi}(M_{\overline{\q}^{(t)}})$, the \textit{characteristic cycle of} $M_{\overline{\q}^{(t)}}$ \cite[4.12]{Hu}, which in our setting of type (II) extension, lies in fact in $(\Omega^1_{\kappa(\overline{\p}^{(t)})})^{\otimes m_t}$, where $m_t=\dim_{\overline{\Lambda}}(M_{\overline{\q}^{(t)}}/(M_{\overline{\q}^{(t)}})^{(0)})$, with $(M_{\overline{\q}^{(t)}})^{(0)}$ denoting the part of $M_{\overline{\q}^{(t)}}$ fixed by the tame inertia subgroup of $G_{\overline{\q}^{(t)}}$ \cite[10.5]{Hu}.

The residue field $\kappa(\overline{\p}^{(t)})$ coincides with $\cO_{\scrC_{s'}^{[t]}, \overline{\p}^{(t)}}$. We denote by
$\ord_{0^{(t)}} : \kappa(\overline{\p}^{(t)})^{\times}\to \bZ$ (resp. $\ord_{\infty^{(t)}} : \kappa(\overline{\p}^{(t)})^{\times}\to \bZ$) the normalized valuation map defined by $\ord_{0^{(t)}}(\xi)=1$ (resp. $\ord_{\infty^{(t)}}(\xi)=-1$). We again denote by $\ord_{0^{(t)}} : (\Omega^1_{\kappa(\overline{\p}^{(t)})})^{\otimes m_t} - \{0\}\to \bZ$ the multiplicative extension of $\ord_{0^{(t)}}$ defined by $\ord_{0^{(t)}}(b{\rm d} a)=\ord_{0^{(t)}}(b)$, for any $a, b\in \kappa(\overline{\p}^{(t)})^{\times}$ such that $\ord_{0^{(t)}}(a)=1$; we also denote by $\ord_{\infty^{(t)}}$ the similar extension of $\ord_{\infty^{(t)}}$ to $(\Omega^1_{\kappa(\overline{\p}^{(t)})})^{\otimes m_t} - \{0\}$.
The rational numbers
\begin{align}
\sw_{\AS}(\cF, t)=\sw_{G_{\overline{\q}^{(t)}}}^{\rm AS}(M_{\overline{\q}^{(t)}}),
\label{SW&CC 1}\\
\varphi_s(\cF, t)=-\ord_{0^{(t)}}(\CC_{\psi}(M_{\overline{\q}^{(t)}})) - m_t. \label{SW&CC 2}
\end{align}
are well-defined, independently of the chosen $\overline{\q}^{(t)}\in \fX_{K'}^{[t]}(\overline{\p}^{(t)})$, as can be seen a posteriori from \ref{Abbes-Saito-Kato}.

\begin{prop}
	\label{Abbes-Saito-Kato}
We use the notation of \ref{Coeffs-finis}.  The following statements hold.
\begin{itemize}
\item[(1)] For a rational number $r\leq t < r'$, we have
\begin{equation}
	\label{Abbes-Saito-Kato 1}
\sw_{\AS}(\cF, t)= \widetilde{a}_f(\chi_{\cF}, t).
\end{equation}
\item[(2)] For rational numbers $r\leq t < t' \leq r'$, we have
\begin{equation}
	\label{Abbes-Saito-Kato 2}
\varphi_s(\cF, t) - \varphi_s(\cF, t')= \widetilde{\sw}_f^{\beta}(\chi_{\cF}, [t, t']).
\end{equation}
\end{itemize}
\end{prop}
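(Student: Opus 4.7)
The plan is to reduce both identities, via Frobenius reciprocity and the transitivity of the $G$-action on $\fX_{K'}^{[t]}(\overline{\p}^{(t)})$, to the pointwise comparison \cite[8.25]{VCS} between Abbes-Saito and Kato ramification invariants for a single Galois extension of henselian discrete valuation fields of type (II). This latter comparison rests on Hu's identification of characteristic cycles \cite[10.4]{Hu} and is the essential dictionary between the $\bZ^2$-valuation ring machinery underlying $\widetilde{a}_f$, $\widetilde{\sw}_f^\beta$, and the intrinsic invariants $\sw_{\AS}(\cF,\cdot)$, $\varphi_s(\cF,\cdot)$.

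First I would fix $K'$ admissible for $f$ and a geometric point $\overline{\q}^{(t)} \in \fX_{K'}^{[t]}(\overline{\p}^{(t)})$, with stabilizer $G_{\overline{\q}^{(t)}}$ and type-(II) sub-extension Galois group $G'_{\overline{\q}^{(t)}}$ as in \ref{Artin-Intrinseque}. Via the dictionary of \cite[3.21]{VCS}, $\overline{\q}^{(t)}$ corresponds to some $\tau\in S_{f^{[t,t']}}$ with $G_\tau=G_{\overline{\q}^{(t)}}$; the compatibilities \eqref{FunctionsArtinSwan 3}--\eqref{FunctionsArtinSwan 4} then furnish $a^\alpha_{G_\tau}|G'_{\overline{\q}^{(t)}}=a_{G'_{\overline{\q}^{(t)}}}$ and its analogue for $\sw^\beta$. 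The analogous data is chosen at $t'$ via some $\tau'\in S'_{f^{[t,t']}}$.

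For (i), Frobenius reciprocity applied to \eqref{Artin-Intrinseque 5} gives
\begin{equation*}
\widetilde{a}_f(\chi_{\cF},t)=\langle\widetilde{a}_{f^{[t]}},\chi_{\cF}\rangle_G=|G'_{\overline{\q}^{(t)}}|\cdot\langle a_{G'_{\overline{\q}^{(t)}}},\chi_{M_{\overline{\q}^{(t)}}}\rangle_{G'_{\overline{\q}^{(t)}}},
\end{equation*}
and \cite[8.25]{VCS}, applied to the type-(II) extension $\bK_{\overline{\q}^{(t)}}/\bK'_{\overline{\p}^{(t)}}$ with representation $M_{\overline{\q}^{(t)}}$, identifies the right-hand side with $\sw^{\AS}_{G_{\overline{\q}^{(t)}}}(M_{\overline{\q}^{(t)}})=\sw_{\AS}(\cF,t)$. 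For (ii), the same reciprocity applied term-by-term to \eqref{FunctionsArtinSwan 6} produces two contributions, one at each end of the interval $[t,t']$. Invoking \cite[8.25]{VCS} and Hu's comparison, each contribution is identified with the appropriate order of $\CC_\psi$ shifted by the corresponding $m$-value, yielding $\varphi_s(\cF,t)$ at $t$ and $-\varphi_s(\cF,t')$ at $t'$. The sign flip arises because the $\beta$-direction at the prime $\p'$ (the inner branch, parameterized by $\zeta=\pi^{r'-r}/\xi$) is opposite to that at $\p$ (the outer branch, parameterized by $\xi$), so that the Kato-$\beta$ valuation at $t'$ reads off $\ord_{\infty^{(t')}}$ of the characteristic cycle rather than $\ord_{0^{(t')}}$, and, after combining with $-m_{t'}$, contributes $-\varphi_s(\cF,t')$.

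The hard part will be the careful tracking of orientations and normalization constants. The two pieces $\widetilde{\sw}_{f^{[t,t']}}^\beta$ and $\widetilde{\sw}_{f^{[t,t']}}^{\prime\beta}$ encode ramification data at the two ends of the annulus with opposite natural orientations, and one must confirm that the factor $|G'_{\overline{\q}^{(\cdot)}}|$ introduced in \eqref{Normalized Conductors 1} exactly absorbs the orbit-size normalization required by Frobenius reciprocity, so that the resulting local identity is precisely the one output by \cite[8.25]{VCS}. Once this bookkeeping is verified—none of which introduces new geometric or cohomological ingredients beyond those already established in \cite{VCS} and \cite{Hu}—the proposition follows.
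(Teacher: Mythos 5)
Your proposal is correct and follows essentially the same route as the paper: both reduce, via the identifications $G_{\tau}=G_{\overline{\q}^{(t)}}$, the compatibilities \eqref{FunctionsArtinSwan 3}--\eqref{FunctionsArtinSwan 4} and Frobenius reciprocity, to the pointwise Kato/Abbes--Saito comparison of \cite{VCS} (resting on \cite[10.4]{Hu}) applied at each end of $[t,t']$, with the sign at the inner boundary coming from the orientation reversal $\xi\mapsto 1/\xi$. The "bookkeeping" you defer is exactly the one-line identity $\ord_{\infty^{(t')}}(\omega)=-\ord_{0^{(t')}}(\omega)-2m_{t'}$ used in the paper, so nothing essential is missing.
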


\begin{proof}
For rational numbers $r\leq t < t' \leq r'$, let $(\ox_{\tau}, \q_{\tau})$ (resp. $(\ox_{\tau'}, \q_{\tau'})$) be an element of $S_{f^{[t, t']}}$ (resp. $S_{f^{[t, t']}}$) (\ref{FunctionsArtinSwan}) and denote by $\overline{\q}^{(t)}$ (resp. $\overline{\q}^{(t')}$) the geometric generic point of the normalized integral model of $f^{-1}(A(t', t))$ corresponding to $\q_{\tau}$ (resp. $\q_{\tau'}$) \eqref{Artin-Intrinseque}. The geometric point $\ox_{\tau}$ is above the origin point $o^{(t)}$ defined by the coordinate $\xi$ with respect to the outer radius $t$ while $\ox_{\tau'}$ is above the point at infinity $\infty^{(t')}$ defined by $\xi$ with respect to the inner radius $t'$. As ${\rm d}\frac{1}{\xi}= -\xi^{-2}{\rm d}\xi$, a straightforward computation shows that, for any $\omega\in (\Omega^1_{\kappa(\overline{\p}^{(t')})})^{\otimes m_{t'}} - \{0\}$, we have
\begin{equation}
	\label{Abbes-Saito-Kato 3}
\ord_{\infty^{(t')}}(\omega)= - \ord_{0^{(t')}}(\omega) - 2m_{t'}.
\end{equation}
We also have $G_{\overline{\q}^{(t)}}=G_{\tau}$ and $G_{\overline{\q}^{(t')}}=G_{\tau'}$. Then, it follows from \cite[8.24]{VCS} and \eqref{FunctionsArtinSwan 4} that
\begin{equation}
	\label{Abbes-Saito-Kato 4}
\sw_{\AS}(\cF, t)=\langle \lvert G_{\tau}\lvert a_{G_{\tau}}^{\alpha}, \chi_{\cF}\lvert G_{\tau}\rangle =\langle \widetilde{a}_{f^{[t, t']}}^{\alpha}, \chi_{\cF}\rangle=\widetilde{a}_f(\chi_{\cF}, t),
\end{equation}
\begin{equation}
	\label{Abbes-Saito-Kato 5}
\varphi_s(\cF, t)=-\ord_{0^{(t)}}(\CC_{\psi}(M_{\overline{\q}^{(t)}})) - m_t =\langle \lvert G_{\tau}\lvert \sw_{G_{\tau}}^{\beta}, \chi_{\cF}\lvert G_{\tau}\rangle =\langle \widetilde{\sw}_{f^{[t, t']}}^{\beta}, \chi_{\cF}\rangle.
\end{equation}
Now applying \eqref{Abbes-Saito-Kato 3} to $\omega=\CC_{\psi}(M_{\overline{\q}^{(t')}}$, we get also from \cite[8.24]{VCS}
\begin{equation}
	\label{Abbes-Saito-Kato 6}
\begin{split}
\varphi_s(\cF, t') &=-\ord_{0^{(t')}}(\CC_{\psi}(M_{\overline{\q}^{(t')}})) - m_{t'} = \ord_{\infty^{(t')}}(\CC_{\psi}(M_{\overline{\q}^{(t')}})) + m_{t'}\\
&= - \langle \lvert G_{\tau'}\lvert \sw_{G_{\tau'}}^{\beta}, \chi_{\cF}\lvert G_{\tau}\rangle = -\langle \widetilde{\sw}_{f^{[t, t']}}^{\prime\beta}, \chi_{\cF}\rangle.
\end{split}
\end{equation}
Putting \eqref{Abbes-Saito-Kato 5} and \eqref{Abbes-Saito-Kato 6} together with \eqref{FunctionsArtinSwan 6} yields \eqref{Abbes-Saito-Kato 2}.
\end{proof}

\begin{cor}
	\label{Concatenation}
We use the notation of \eqref{Coeffs-finis 1}.
For rational numbers $r\leq t < t' < t'' \leq r'$, we have
\begin{equation}
	\label{Concatenation 1}
\widetilde{\sw}_f^{\beta}(\chi_{\cF}, [t, t']) + \widetilde{\sw}_f^{\beta}(\chi_{\cF}, [t', t''])= \widetilde{\sw}_f^{\beta}(\chi_{\cF}, [t, t'']).
\end{equation}
\end{cor}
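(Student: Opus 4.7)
The plan is to prove this identity by a direct telescoping argument, using Proposition \ref{Abbes-Saito-Kato}(2) as a black box. Since the right-hand side of \eqref{Abbes-Saito-Kato 2} depends on the interval $[t,t']$ only through the pair of endpoints $t$ and $t'$, and does so additively in a signed way, concatenation of intervals should reduce to cancellation of the middle term.

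More precisely, I would apply Proposition \ref{Abbes-Saito-Kato}(2) three times — once to each of the intervals $[t,t']$, $[t',t'']$ and $[t,t'']$ — to obtain the identities
\begin{equation}
\widetilde{\sw}_f^{\beta}(\chi_{\cF}, [t, t']) = \varphi_s(\cF, t) - \varphi_s(\cF, t'),
\end{equation}
\begin{equation}
\widetilde{\sw}_f^{\beta}(\chi_{\cF}, [t', t'']) = \varphi_s(\cF, t') - \varphi_s(\cF, t''),
\end{equation}
\begin{equation}
\widetilde{\sw}_f^{\beta}(\chi_{\cF}, [t, t'']) = \varphi_s(\cF, t) - \varphi_s(\cF, t'').
\end{equation}
Summing the first two equalities, the terms $\varphi_s(\cF, t')$ cancel, and the resulting right-hand side $\varphi_s(\cF, t) - \varphi_s(\cF, t'')$ matches the right-hand side of the third equality, giving \eqref{Concatenation 1}.

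There is no real obstacle here: all the work has already been done in establishing Proposition \ref{Abbes-Saito-Kato}(2), whose right-hand side is manifestly a difference of quantities attached separately to the two endpoints of the interval. The only point worth checking is that the definitions \eqref{FunctionsArtinSwan 2} and \eqref{FunctionsArtinSwan 6} of $\widetilde{\sw}_f^{\beta}(\chi_{\cF}, \cdot)$ make sense for each of the three intervals at hand, which is immediate from $t < t' < t''$ (none of the intervals is a singleton, and they all lie in $[r,r']\cap\bQ$). One may also note, as a sanity check, that the identity \eqref{Concatenation 1} could alternatively be read as expressing the additivity, with respect to concatenation of intervals, of the class $\widetilde{\sw}_{f^{[t,t']}}^{\beta} + \widetilde{\sw}_{f^{[t,t']}}^{\prime\beta}$ paired with $\chi_{\cF}$; this is consistent with the geometric realization provided by Theorem \ref{Realization-NearbyCycles-Swan}, where the two "outer radii" contributions at $t$ and $t''$ are all that survive after gluing the models over $[t,t']$ and $[t',t'']$ along the common radius $t'$.
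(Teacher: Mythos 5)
Your proof is correct and is exactly the argument the paper intends: the corollary is stated as an immediate consequence of Proposition \ref{Abbes-Saito-Kato}(2), whose right-hand side $\varphi_s(\cF,t)-\varphi_s(\cF,t')$ depends only on the endpoints, so the three applications of \eqref{Abbes-Saito-Kato 2} and the cancellation of the $\varphi_s(\cF,t')$ term give \eqref{Concatenation 1} just as you describe.
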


\begin{thm}
	\label{Variation-Faisecau-Lisse}
The function
$\sw_{\rm AS}(\cF, \cdot): ~ [r, r']\cap \bQ \to \bQ$
is continuous, convex and piecewise linear with finitely many slopes which are all integers. Moreover, for rational numbers $r\leq t < t' \leq r'$, the difference of the right and left derivatives of $\sw_{\rm AS}(\cF, \cdot)$ at $t$ and $t'$ respectively is
\begin{equation}
	\label{Variation-Faisecau-Lisse 1}
\frac{d}{dt}\sw_{\rm AS}(\cF, t+) - \frac{d}{dt}\sw_{\rm AS}(\cF, t'-)=\varphi_s(\cF, t) - \varphi_s(\cF, t').
\end{equation}
\end{thm}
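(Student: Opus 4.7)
The plan is to reduce Theorem \ref{Variation-Faisecau-Lisse} to the variational and positivity results already established for finite étale Galois morphisms in Section \ref{Variation-Conductors}, by translating the Abbes–Saito-theoretic quantities $\sw_{\AS}(\cF,\cdot)$ and $\varphi_s(\cF,\cdot)$ into the intrinsic Kato-type invariants $\widetilde{a}_f(\chi_{\cF},\cdot)$ and $\widetilde{\sw}_f^{\beta}(\chi_{\cF},\cdot)$ via Proposition \ref{Abbes-Saito-Kato}. Concretely, I first invoke the Galois correspondence recalled in \ref{Faisceau} to associate to $\cF$ a connected étale Galois cover $f:X\to C$ of group $G=\Aut(X/C)$ together with its defining representation $\rho_{\cF}$, with class $\chi_{\cF}\in R_{\overline{\Lambda}}(G)$. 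Since $f$ is étale, for every subgroup $H\subset G$ the quotient morphism $f_H:X/H\to C$ is étale as well, so the triviality-of-canonical-sheaf hypothesis required by Theorem \ref{Variation} (see Remark \ref{Convexity-Expected}) is automatically satisfied, and in addition the convexity statement of Corollary \ref{Convexity} is available.

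Next I use Proposition \ref{Abbes-Saito-Kato}(1) to rewrite
\begin{equation*}
\sw_{\AS}(\cF,t)=\widetilde{a}_f(\chi_{\cF},t)\qquad\text{for every }t\in[r,r']\cap\bQ,
\end{equation*}
and Proposition \ref{Abbes-Saito-Kato}(2) to rewrite
\begin{equation*}
\varphi_s(\cF,t)-\varphi_s(\cF,t')=\widetilde{\sw}_f^{\beta}(\chi_{\cF},[t,t'])\qquad\text{for }r\leq t<t'\leq r'.
\end{equation*}
Under these identifications the regularity assertions of the theorem (continuity, piecewise linearity, finiteness of slopes, integrality of slopes, and the formula relating one-sided derivatives to $\widetilde{\sw}_f^{\beta}(\chi_{\cF},[t,t'])$) are exactly the content of Corollary \ref{Variation-Coeffs-finis} applied to $\overline{\chi}=\chi_{\cF}\in R_{\overline{\Lambda}}(G)$. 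Thus the only item not immediately supplied by \ref{Variation-Coeffs-finis} is convexity.

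For convexity, I appeal directly to Corollary \ref{Convexity}: since $f$ is étale, the function $\widetilde{a}_f(\chi_{\cF},\cdot)$ is convex, and by the first identity above this gives at once convexity of $\sw_{\AS}(\cF,\cdot)$. Concatenating the three translations yields all the claimed properties and the derivative formula \eqref{Variation-Faisecau-Lisse 1}.

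The substance of the argument is thus already in place: the analytic/geometric variational behaviour comes from the nearby-cycles formula \ref{Nearby Cycles Lutke} combined with the Kato--Hurwitz formula \ref{Kato-Formula} (both feeding \ref{DiscVar} and then \ref{Var-Reg} and \ref{Variation}), while convexity is the deeper input, resting on the projective realisation \ref{Realization-NearbyCycles-Swan} of $\widetilde{\sw}_f^{\beta}+\widetilde{\sw}_f^{\prime\beta}$ which in turn uses the rationality theorem \ref{Rationality-of-NearbyCycles} and Serre's criterion. The only thing left to check for the present theorem is therefore that the comparison \ref{Abbes-Saito-Kato}, which is already stated in full generality, indeed matches the Abbes–Saito invariants evaluated at a single radius $t$ or across an interval $[t,t']$ with the Kato invariants attached to $f^{[t,t']}$; this is a routine verification using the identification of $G_{\overline{\q}^{(t)}}$ with the stabiliser $G_{\tau}$ for a compatibly chosen $\tau\in S_{f^{[t,t']}}$, and has no new content beyond what is already recorded. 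No step is a genuine obstacle: the argument is essentially the concatenation of Proposition \ref{Abbes-Saito-Kato}, Corollary \ref{Variation-Coeffs-finis} and Corollary \ref{Convexity}.
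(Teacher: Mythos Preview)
Your proposal is correct and follows essentially the same route as the paper: the paper's proof likewise observes that \'etaleness of $f$ forces each $X/H\to C$ to be \'etale (hence have trivial canonical sheaf), and then concludes by combining \ref{Abbes-Saito-Kato}, \ref{Variation-Coeffs-finis}, and \ref{Convexity}. Your additional commentary on how the nearby cycles formula, the Kato--Hurwitz formula, and the projective realisation feed into these ingredients is accurate but already upstream of the cited results; it adds no new logical content to the proof itself.
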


\begin{proof}
Since $f$ is étale, for every subgroup $H$ of $G$, $X/H$ is also étale, hence has trivial canonical sheaf.
Then, the theorem follows from \ref{Variation-Coeffs-finis}, \ref{Convexity} and \ref{Abbes-Saito-Kato}.
\end{proof}

\begin{rem}
	\label{Compatibility}
If $\cF$ is the restriction to the annulus $C$ of a lisse sheaf of $\overline{\Lambda}$-modules on $D$, then the functions $\sw_{\AS}(\cF, \cdot)$ and $\varphi_s(\cF, \cdot)$ coincide on $[r, r']$ with the similarly denoted functions in \cite[1.9, 9.3]{VCS}.
Hence, if $t'$ is a non-critical radius for $f$, we deduce from the main result \cite[1.9]{VCS} that
\begin{equation}
	\label{Compatibility 1}
\frac{d}{dt}\sw_{\rm AS}(\cF, t+) - \frac{d}{dt}\sw_{\rm AS}(\cF, t'-)=\varphi_s(\cF, t) - \varphi_s(\cF, t'),
\end{equation}
which recovers \ref{Variation-Faisecau-Lisse 1}.
\end{rem}


\begin{thebibliography}{99} 
\bibitem[Abb10]{EGR} {\sc A. Abbes}, \'Eléments de Géométrie Rigide, I Construction et étude géométrique des espaces rigides, Birkhäuser, \textit{Progress in Mathematics} {\bf 286} (2010).

\bibitem[AS02]{A.S.1} {\sc A. Abbes, T. Saito}, Ramification of locals fields with imperfect residue fields, Amer. J. Math. {\bf 124} (2002), 879-920.

\bibitem[AS11]{A.S.3} {\sc A. Abbes, T. Saito}, Ramification and cleanliness, Tohoku Math. Journal. Centennial issue, {\bf 63} (2011), no. 4, 775-853.

\bibitem[Bah20]{VCS} {\sc A. Bah}, Variation of the Swan conductor of an an $\bF_{\ell}$-sheaf on a rigid disc, Preprint \url{https://arxiv.org/abs/2010.14843} (2020).

\bibitem[Bos14]{Bosch} {\sc S. Bosch}, Lectures on Formal and Rigid Geometry, \textit{Lectures Notes in Mathematics}, Springer {\bf 2105} (2014).

\bibitem[BGR84]{BGR} {\sc S. Bosch, U. Güntzer, R. Remmert}, Non-Archimedean analysis, Springer-Verlag, {\bf 261} (1984).

\bibitem[Bou06]{Bourbaki1} {\sc N. Bourbaki}, Algèbre Commutative, Springer-Verlag, (2006).

\bibitem[EN55]{EN55} {\sc S. Eilenberg, T. Nakayama}, On the dimension of modules and Algebras, II: (Frobenius algebras and quasi-Frobenius rings), Nagoya Math. Journal {\bf 9} (1955), 1-16.

\bibitem[Eis94]{Eisenbud} {\sc D. Eisenbud}, Commutative Algebra with a view toward Algebraic Geometry, Springer-Verlag, (1994).

\bibitem[deJ95]{deJong} {\sc A. J. de Jong}, {\'E}tale fundamental groups of non-Archimedean analytic spaces, Compositio math. {\bf 97} (1995), no. 1-2, 89-118.

\bibitem[Fu11]{Fu}{\sc L. Fu}, \'Etale cohomology theory, World Scientific, Nankai Tracts in Mathematics, {\bf 169} (2011).

\bibitem[EGA III]{EGA.III} {\sc A. Grothendieck, J.A. Dieudonné}, \'Eléments de Géométrie Algébrique, III \'Etude cohomologique des faisceaux cohérents, Pub. Math. IH\'ES {\bf 11} (1961), {\bf 17} (1963).

\bibitem[EGA IV]{EGA.IV} {\sc A. Grothendieck, J.A. Dieudonné}, \'Eléments de Géométrie Algébrique, IV \'Etude locale des schémas et
des morphismes de schémas, Pub. Math. IH\'ES {\bf 20} (1964), {\bf 24} (1965), {\bf 28} (1966), {\bf 32} (1967).

\bibitem[Hu15]{Hu} {\sc H. Hu}, Ramification and nearby cycles for $\ell$-adic sheaves on relative curves, Tohoku Math. Journal {\bf 67}, (2015), no. 2, 153-194.

\bibitem[Ill94]{Illusie1} {\sc L. Illusie}, Autour du théorème de monodromie locale, In \textit{Périodes $p$-adiques}, Astérisque {\bf 223} (1994).

\bibitem[Jan88]{Jannsen} {\sc U. Jannsen},  Continuous étale cohomology,  Math.  Ann.  {\bf 280},  (1988),  207-245.

\bibitem[Kat87]{K1} {\sc K. Kato}, Vanishing cycles, ramification of valuations and class field theory, Duke Math. J. {\bf 55}, (1987), no. 3, 629-659.

\bibitem[Lip69]{Lipman} {\sc J. Lipman}, Rational singularities with applications to algebraic surfaces and unique factorization,  Pub. Math. IH\'ES {\bf 36} (1969), 195-279.

\bibitem[Lüt93]{Lutke} {\sc W. Lütkebohmert}, Riemann’s existence problem for a p-adic field, Invent. Math. {\bf 111} (1993), 309-330.

\bibitem[Ram05]{Ramero} {\sc L. Ramero}, Local monodromy in non-archimedean analytic geometry, Pub. Math. IH\'ES {\bf 102} (2005), 167-280.

\bibitem[Ray94]{Raynaud-Abh} {\sc M. Raynaud}, Revêtements de la droite affine en caractéristique $p> 0$ et conjecture d'Abhyankar, Invent. Math. {\bf 116} (1994), no. 1, 425-462.

\bibitem[Sai87]{Saito87} {\sc T. Saito}, Vanishing cycles and geometry of curves over a discrete valuation ring, Amer. J. Math. {\bf 109} (1987), no. 6, 1043-1085.

\bibitem[Sai]{SaitoBook} {\sc T. Saito}, Ramification theory and Vanishing cycles, Provisional title of a book in preparation.

\bibitem[Ser68]{Serre1} {\sc J.-P. Serre}, Corps Locaux, Hermann, Paris (1968).

\bibitem[Ser98]{Serre2} {\sc J.-P. Serre}, Representations linéaires des groupes finis, Hermann, Paris (1998).

\bibitem[SGA 4]{SGA4} {\sc M. Artin, A. Grothendieck, J.-L. Verdier}, \textit{Séminaire de Géométrie Algébrique du Bois-Marie}, Théorie des topos et cohomologie étale des schémas (SGA 4), Springer-Verlag, Lect. Notes in Math. {\bf 269} (1972), {\bf 270} (1972), {\bf 305} (1973).

\bibitem[SGA 5]{SGA5} {A. Grothendieck, L. Illusie}, \textit{Séminaire de Géométrie Algébrique du Bois-Marie}, Cohomologie $\ell$-adique et Fonctions $L$ (SGA 5), Springer-Verlag, Lect. Notes in Math. {\bf 589} (1977).

\bibitem[SGA 6]{SGA6} {\sc P. Berthelot, A. Grothendieck, L. Illusie}, \textit{Séminaire de Géométrie Algébrique du Bois-Marie}, Théorie des Intersections et Théorème de Rieman-Roch (SGA 6), Springer-Verlag, Lect. Notes in Math. {\bf 225} (1971).

\bibitem[SGA 7]{SGA7} {\sc P. Deligne, N. M. Katz}, \textit{Séminaire de Géométrie Algébrique du Bois-Marie}, Groupes de Monodromie en Géométrie Algébrique (SGA 7), Springer-Verlag, Lect. Notes in Math. {\bf 288} (1972), {\bf 340} (1973).

\bibitem[SGA 4$\frac{1}{2}$]{SGA4demi} {\sc P. Deligne}, \textit{Séminaire de Géométrie Algébrique du Bois-Marie}, Cohomologie étale (SGA 4$\frac{1}{2}$), Springer-Verlag, Lect. Notes in Math. {\bf 569} (1977).

\end{thebibliography}
\end{document}